\title{\textbf{Universal aspects of critical percolation on random half-planar maps}}
\author{Loïc Richier%
  \thanks{UMPA, \'Ecole Normale Supérieure de Lyon, 46 allée d'Italie,
69364 Lyon Cedex 07, France. \newline Email: \texttt{loic.richier@ens-lyon.fr}}}
\date{\today}
\newtheorem{Th}{\bf Theorem}[section]
\newtheorem*{Thbis}{\bf Theorem}
\newtheorem{Def}[Th]{\bf Definition}
\newtheorem{Prop}[Th]{\bf Proposition}
\newtheorem*{Propbis}{\bf Proposition}
\newtheorem{Lem}[Th]{\bf Lemma}
\theoremstyle{definition}
\newtheorem{Alg}{\bf ALGORITHM}
\newtheorem*{Rk}{\bf Remark}
\newtheorem*{Rks}{\bf Remarks}
\newtheorem*{Ack}{\bf Acknowledgements}
\begin{document}

\maketitle

\begin{abstract}
We study a large class of Bernoulli percolation models on random lattices of the half-plane, obtained as local limits of uniform planar triangulations or quadrangulations. We first compute the exact value of the site percolation threshold in the quadrangular case using the so-called peeling techniques. Then, we generalize a result of Angel about the scaling limit of crossing probabilities, that are a natural analogue to Cardy's formula in (non-random) plane lattices. Our main result is that those probabilities are universal, in the sense that they do not depend on the percolation model neither on the degree of the faces of the map.
\end{abstract}

\section{Introduction}\label{SectionIntro}

In this work, we consider several aspects of Bernoulli percolation models (site, bond and face percolation) on Uniform Infinite Half-Planar Maps, more precisely on the Uniform Infinite Planar Triangulation and Quadrangulation of the half-plane (UIHPT and UIHPQ in short), which are defined as the so-called \textit{local limit} of random planar maps. Those maps, or rather their infinite equivalents (UIPT and UIPQ), were first introduced by Angel \& Schramm (\cite{angel_uniform_2003}) in the case of triangulations and by Krikun (\cite{krikun_local_2005}) in the case of quadrangulations (see also \cite{chassaing_local_2006}, \cite{menard_two_2010} and \cite{curien_view_2013} for an alternative approach). Angel later defined in \cite{angel_scaling_2004} the half-plane models, which have nicer properties. The Bernoulli percolation models on these maps are defined as follows: conditionally on the map, every site (respectively edge, face) is open with probability $p$ and closed otherwise, independently of every other sites (respectively edges, faces). All the details concerning the map and percolation models are postponed to Section \ref{SectionRPM}. More specifically, we will focus on the site percolation threshold for quadrangulations and the scaling limits of crossing probabilities.

\vspace{5mm}

In Section \ref{SectionPCQ}, we compute the site percolation threshold on the UIHPQ, denoted by $p_{c,\rm{site}}^{\square}$. This problem was left open in \cite{angel_percolations_2015}, where percolation thresholds are given for any percolation and map model (site, bond and face percolation on the UIHPT and UIHPQ), except for the site percolation on the UIHPQ (see also \cite{angel_growth_2003} and \cite{menard_percolation_2014}, where percolation thresholds are studied in the full-plane models). Roughly speaking, it is made harder by the fact that standard exploration processes of the map do not provide relevant information concerning percolation, as we will discuss later. This value is also useful in order to study the problem of the last section in the special case of site percolation on the UIHPQ. Namely, we will prove the following.

\begin{Th}\label{TheoremPCQ}
For Bernoulli site percolation on the UIHPQ, we have

$$p_{c,\rm{site}}^{\square}=\frac{5}{9}.$$Moreover, there is no percolation at the critical point almost surely.
\end{Th}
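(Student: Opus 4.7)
The plan is to adapt Angel's peeling interface argument from the UIHPT setting to the quadrangular one: follow the percolation interface face by face along the boundary, and extract the threshold from the drift of the associated Markov chain on boundary configurations.

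For the setup, I would assign the negative half of the root boundary to be closed and the positive half to be open, then reveal the UIHPQ progressively via a peeling algorithm that follows the resulting interface. At each step one peels an edge incident to the current tip of the interface. Using the known peeling transitions for the UIHPQ, this step either (i) reveals a new quadrangle containing two previously unseen vertices, whose colors are then drawn independently as open with probability $p$, or (ii) identifies the peeled edge with an already-visited boundary edge on one side of the interface, thereby swallowing a finite portion of the map. The probabilities of the various outcomes are known in closed form, so the evolution of the interface defines a Markov chain on boundary configurations near its tip.

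The main obstacle, flagged in the introduction, is precisely that standard peeling does not directly yield a percolation-relevant random walk in the quadrangular case: each peeling step exposes two vertices rather than one, so the count of open and closed boundary vertices does not evolve as a simple one-step random walk as it did in the triangular case. I would work around this by enlarging the state space to record the colors of the two freshly exposed vertices of the most recently peeled quadrangle, or equivalently by splitting each peeling step into two consecutive sub-peelings, producing a larger but still tractable chain. Writing the expected displacement of the interface as a function of $p$ should give a polynomial equation whose relevant root is $5/9$; comparing with the two trivial regimes (the interface drifts into the open side for $p$ close to $1$ and into the closed side for $p$ close to $0$) then confirms that $5/9$ is indeed the site percolation threshold.

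For the absence of percolation at criticality, I would show that at $p = p_{c,\rm{site}}^{\square}$ the interface Markov chain is null recurrent, so that the interface almost surely returns to its starting point. This forces the open cluster of any boundary vertex to be almost surely finite; combined with the translation invariance of the UIHPQ along its boundary and a standard zero-one law for half-planar percolation, this rules out the existence of any infinite open cluster at criticality.
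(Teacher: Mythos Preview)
Your diagnosis of the obstacle is correct, but your proposed workaround does not close the gap. Recording the colors of the two vertices exposed by the \emph{most recent} quadrangle is not enough: the real difficulty is that over many steps, isolated open vertices can accumulate along the boundary, separated from the main open segment by closed vertices. Each such isolated vertex is potentially connected to the origin's cluster through the finite map it borders, so to decide percolation the interface process would have to track an a priori unbounded boundary coloring, not a bounded local state near the tip. ``Splitting each peeling into two sub-peelings'' runs into the same problem: the issue is the global pattern of exposed colors, not the granularity of the steps. As stated, your chain is not a finite-state perturbation of a random walk, and there is no reason its drift should be computable in closed form.

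The paper's solution is to decouple face-revealing from color-revealing. One works with a ``Free--Black--White'' boundary: an infinite free segment on the left, a finite black segment, an infinite white segment on the right. At each step one reveals the color of the \emph{rightmost free vertex} only. If it is black it simply extends the black segment. If it is white, one executes a \emph{vertex-peeling process}: peel faces incident to the edge on its left repeatedly, leaving all newly exposed vertices free, until some revealed face swallows at least one edge to the right, which necessarily engulfs that white vertex. The number of edges swallowed to the right then has the law of $\mathcal{R}_r$ conditioned on $\{\mathcal{R}_r>0\}$. This keeps the boundary pattern exactly ``Free--Black--White'' at every step, so the black-segment length $B_n$ is a genuine random walk with step $1-\mathbf{1}_{\{c=0\}}\mathcal{H}$, where $\mathbb{E}(\mathcal{H}\mid c=0)=\mathbb{E}(\mathcal{R}_r)/\mathbb{P}(\mathcal{R}_r>0)=(1/2)/(2/9)=9/4$. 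The drift $1-\tfrac{9}{4}(1-p)$ vanishes at $p=5/9$, and null recurrence at that point gives $\Theta(p_c)=0$ for this boundary condition. The extension to the free boundary is a separate short argument, iterating the same process each time the black segment is swallowed.
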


Due to the fact that Uniform Infinite Half-Planar Maps have a boundary, it is natural to consider boundary conditions. Here, the result holds for a free boundary condition. We believe that the percolation threshold is independent of these boundary conditions but did not investigate this further. We will discuss this again in Section \ref{SectionPCQ}.

\vspace{5mm}

The last section focuses on percolation models on Uniform Infinite Half-Planar Maps at their critical point, more precisely on crossing events. We work conditionally on the boundary condition of Figure \ref{InitialColouring} (in the case of bond percolation), where black edges are open, $a,b$ are fixed and positive and $\lambda$ is a positive scaling parameter.

\begin{figure}[h]
\begin{center}
\includegraphics[scale=1.6]{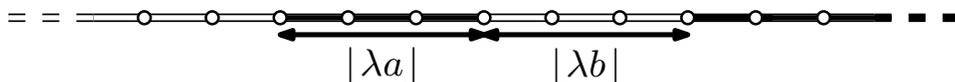}
\end{center}
\caption{The boundary condition for the crossing probabilities problem.}
\label{InitialColouring}
\end{figure}

Starting from now, we use the notation of \cite{angel_percolations_2015} and let $*$ denote either of the symbols $\triangle$ or $\square$, where $\triangle$ stands for triangulations and $\square$ for quadrangulations. The so-called \textit{crossing event} we are interested in is the event that the two black segments are part of the same percolation cluster. We denote by $\mathsf{C}^*(\lambda a,\lambda b)$ this event and study the \textit{scaling limit} of its probability when $\lambda$ goes to infinity. The first result was proved by Angel in the case of site percolation on triangulations.

\begin{Thbis}{(Theorem 3.3 in \cite{angel_scaling_2004})} Let $a,b\geq 0$. For site percolation at the critical point on the UIHPT,

$$\lim_{\lambda \rightarrow +\infty}{\mathbb{P}\left(\mathsf{C}_{\rm{site}}^{\triangle}(\lambda a,\lambda b)\right)} =\frac{1}{\pi}\arccos\left(\frac{b-a}{a+b}\right).$$

\end{Thbis}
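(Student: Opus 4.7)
\vspace{3mm}

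\noindent\textbf{Proof plan.} The approach is via a percolation-adapted peeling exploration of the UIHPT. Recall that the UIHPT enjoys a spatial Markov property: after peeling any boundary edge, the unexplored map is again (conditionally on the shape of the explored piece) an independent UIHPT with updated boundary. A single peeling step produces one of three outcomes: either the revealed triangle carries a new internal vertex (which, at $p = 1/2$, is independently black or white with probability $1/2$), or the triangle closes on an existing boundary vertex to the left or to the right of the peeled edge, swallowing a boundary segment whose length has an explicit heavy-tailed law.

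The plan is to start the peeling at the boundary edge separating the left black arc of length $\lfloor\lambda a\rfloor$ from the intervening white segment, and to always peel the edge just to the right of the exploration's current tip. This tailors the exploration to follow the interface between the black cluster growing out of the left arc and its white complement. Let $B_t$ be the number of vertices still remaining in the right black arc, and $W_t$ the number of white boundary vertices between the tip and that right arc. Then $(B_t, W_t)$ is a Markov chain whose transitions are read off from the peeling law together with the Bernoulli$(1/2)$ colouring; crucially, at criticality the black-vertex and white-vertex peeling events are symmetric, and the crossing event translates into the event that $\{W_t = 0\}$ is hit before $\{B_t = 0\}$.

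Passing to the scaling limit, the heavy tails of the UIHPT peeling increments lie in the domain of attraction of a one-sided $3/2$-stable law, so that under the rescaling $\lambda^{-1}\bigl(B_{\lfloor \lambda^{3/2} s \rfloor},\, W_{\lfloor \lambda^{3/2} s \rfloor}\bigr)$ one obtains convergence to a pair of processes that decouple into independent one-sided $1/2$-stable subordinators. The crossing probability then becomes the probability that the first-passage time of the $W$-subordinator at a level proportional to $b$ exceeds that of the $B$-subordinator at a level proportional to $a$ (the exact constants coming from the peeling law). The classical fact that the ratio of two independent positive $1/2$-stable variables is distributed as Beta$(1/2, 1/2)$ (the arcsine law) then produces a closed-form answer, which after the identity $\arccos(1-2u) = 2\arcsin\sqrt{u}$ is identified with $(1/\pi)\arccos((b-a)/(a+b))$.

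The main obstacle is establishing that the interface-following peeling faithfully decides the crossing event: one must verify that the two black arcs lie in the same percolation cluster if and only if the exploration eventually exhausts the white arc along the interface and hits the right black arc. This relies on planarity, on the locality of the interface on a planar map once the boundary colouring and starting edge are fixed, and on the fact that at $p_c$ there is no infinite cluster, so the crossing is decided in finite rescaled time, allowing hitting probabilities to be transferred from the discrete chain to its stable limit.
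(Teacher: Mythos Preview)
Your plan diverges from Angel's original argument (and from the paper's general scheme in Section~\ref{SectionCrossingP}) exactly at the scaling-limit step, and that is where it breaks.

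The paper, following Angel, tracks a \emph{single} process: the length $B_n$ of the finite black segment, peeling from the White$(\infty)$--Black$(\lfloor\lambda a\rfloor)$ junction. For site percolation on the UIHPT this $B_n$ is a genuine centred random walk whose negative tail behaves like $k^{-5/2}$, so $\lambda^{-1}B_{\lfloor\lambda^{3/2}t\rfloor}$ converges to a spectrally negative $3/2$-stable process $\mathcal{S}$ started at $a$ (Proposition~\ref{stablerw}). The crossing event is precisely $\{|B_T|>\lfloor\lambda b\rfloor\}$ where $T=\inf\{n:B_n\le 0\}$, and the overshoot law of $\mathcal{S}$ (Proposition~\ref{stableovershoot}) delivers $\tfrac{1}{\pi}\arccos\tfrac{b-a}{a+b}$ directly. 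No second process, no arcsine computation.

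Your claim that the rescaled pair $(B_t,W_t)$ converges to \emph{independent one-sided $1/2$-stable subordinators} is the gap. Neither process is monotone: every time the revealed vertex is black the black segment gains a vertex, and every time it is white the white segment gains one. These $+1$ increments are not negligible under the $\lambda^{-1}$, $\lambda^{3/2}$-time scaling---they are exactly what produces the continuous upward part of the spectrally negative $3/2$-stable limit. So the limits cannot be subordinators. Moreover the two coordinates are driven by the same peeling step (a single revealed triangle moves either $B$ or $W$ but shares the clock), so asymptotic independence would itself require an argument you have not supplied. Finally, the ``race to zero'' characterisation quietly discards the overshoot: the step that first exhausts the black segment may or may not simultaneously swallow the entire white segment, and it is precisely this dichotomy that decides the crossing.

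There is a correct shadow of your intuition: the overshoot formula in Proposition~\ref{stableovershoot} is proved via the fact that the descending ladder-height process of $-\mathcal{S}$ is a $1/2$-stable subordinator, and the arcsine identity $\tfrac{1}{\pi}\arccos(1-2u)=\tfrac{2}{\pi}\arcsin\sqrt{u}$ does enter there. But that $1/2$-stable object is a functional of the \emph{single} limit process $\mathcal{S}$, not a pair of independent subordinators arising from two discrete explorations. If you want to salvage the approach, you would need to reduce to the one-process picture first and then invoke the ladder-height description of its overshoot---at which point you have reproduced Angel's proof.
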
 

Our motivation is that this problem is a natural analogue to the famous \textit{Cardy's formula} in regular lattices, which has been proved by Smirnov in the case of the triangular lattice (see \cite{smirnov_critical_2001}). We are interested in the \textit{universal} aspect of this scaling limit, in the sense that it is preserved for site, bond and face percolation on the UIHPT and the UIHPQ. Our main result is the following.

\begin{Th}\label{TheoremCP}
Let $a,b\geq 0$. We have for critical site, bond and face percolation models on the UIHPT and the UIHPQ,

$$\lim_{\lambda \rightarrow +\infty}{\mathbb{P}\left(\mathsf{C}^{*}(\lambda a, \lambda b)\right)}=\frac{1}{\pi}\arccos\left(\frac{b-a}{a+b}\right).$$ In other words, asymptotic crossing probabilities are equal for site, bond and face percolation on the UIHPT and the UIHPQ at their critical percolation threshold.
\end{Th}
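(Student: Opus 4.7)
The plan is to reduce each of the five new cases (bond and face percolation on the UIHPT, and site, bond and face percolation on the UIHPQ) to the already-proved case of site percolation on the UIHPT by setting up a peeling exploration of the percolation interface and showing that its scaling limit does not depend on the model. Concretely, for each model I would introduce an exploration of the interface between the two (potential) clusters containing the black segments of Figure \ref{InitialColouring}. Starting from the boundary vertex separating the white boundary from the black boundary on one side, the exploration reveals one face (together with its colour, where applicable) at each step, in such a way that, conditionally on the revealed region, the unexplored map is again a UIHPM with a new coloured boundary condition of the same type. Thanks to the spatial Markov property of the UIHPT and UIHPQ, the associated process $(L_n,R_n)_{n\geq 0}$ recording the numbers of black boundary elements still to be crossed on the left and right of the current peeling location is a Markov chain, and the crossing event $\mathsf{C}^{*}(\lambda a,\lambda b)$ is (up to the negligible discrepancy on when exactly the two monochromatic segments meet) the event that this chain hits $\{L=0\}$ before $\{R=0\}$.

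The first step consists in writing down the one-step transition probabilities of $(L_n,R_n)$ in each of the five models. This is done via standard peeling formulas for half-planar maps combined with the value of the critical parameter: for site percolation on the UIHPQ this is precisely where $p_{c,\mathrm{site}}^{\square}=5/9$ from Theorem \ref{TheoremPCQ} enters, while for the remaining cases I can rely on the thresholds given in \cite{angel_percolations_2015}. The criticality of the percolation parameter translates into the absence of drift in the projected walk $L_n-R_n$, which is what makes the hitting probabilities nondegenerate in the scaling limit.

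The core step, and the place where I expect the main technical difficulty to lie, is to show that the rescaled chains $\lambda^{-1}(L_{\lfloor\tau(\lambda)t\rfloor},R_{\lfloor\tau(\lambda)t\rfloor})$ converge in distribution, for an appropriate time scaling $\tau(\lambda)$, to the same limiting Lévy process regardless of the model. Because the peeling of a UIHPM produces at each step a face of possibly large degree, the jump distribution of $(L_n,R_n)$ has a heavy tail in the negative directions, and the limit should be a two-dimensional stable process with only negative jumps in each coordinate. Computing the jump-size asymptotics in each model is a direct but tedious calculation; the miracle one has to check is that the leading terms coincide up to a multiplicative constant that can be absorbed into the time rescaling $\tau(\lambda)$. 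A convenient way to package this is to compute and compare the Laplace transforms of the step laws and verify that they share the same non-trivial limit as $\lambda \to +\infty$. This Laplace-transform comparison is precisely the universality phenomenon advertised in the theorem and is, in my view, the main obstacle of the proof.

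Once the common scaling limit is identified, the conclusion follows by standard functional-convergence arguments: the limiting Lévy process a.s.\ does not creep across the coordinate axes, so the first hitting time of $\{L=0\}\cup\{R=0\}$ is a.s.\ a continuous functional of the trajectory, and the limits of $\mathbb{P}(\mathsf{C}^{*}(\lambda a, \lambda b))$ coincide in all six models. Since for site percolation on the UIHPT this common limit equals $\frac{1}{\pi}\arccos\left(\frac{b-a}{a+b}\right)$ by Angel's theorem recalled above, the same formula holds in each of the six cases, which is the claim.
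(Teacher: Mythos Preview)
Your outline captures the right philosophy---peel the interface, get a heavy-tailed walk, pass to a stable limit---but it elides the step that carries most of the work in the paper, and that step cannot be dismissed as a ``negligible discrepancy on when exactly the two monochromatic segments meet''.

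In Angel's site-on-UIHPT case the crossing event is \emph{exactly} decided by the peeling: the moment the finite black segment is swallowed, either the overshoot jumps past the white segment (crossing) or it does not (no crossing). For bond and face percolation, and for site percolation on the UIHPQ, this is no longer true. When the process $(B_n)$ tracking the black segment first enters $\mathbb{Z}_-$ with overshoot $|B_T|$, the crossing event is \emph{not yet determined}:
\begin{itemize}
\item If $|B_T|<\lfloor\lambda b\rfloor$, there may still be an open path from the swallowed region to the infinite black segment through the remaining UIHP-$*$ (the event $\mathsf{C}^1_\lambda$ in the paper). Showing $\mathbb{P}(\mathsf{C}^1_\lambda)\to 0$ requires the absence of percolation at the critical point for the relevant model and boundary condition.
\item If $|B_T|>\lfloor\lambda b\rfloor$, the crossing depends on whether a crossing occurs inside the swallowed Boltzmann polygon (the event $\mathsf{C}^2_\lambda$). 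Showing $\mathbb{P}(\mathsf{C}^2_\lambda)\to 1$ requires a local-limit argument together with the absence of \emph{dual} percolation at criticality (bond and face cases) or, for site percolation on the UIHPQ where no planar dual is available, a separate iterative peeling argument exploiting that $1-p_{c,\mathrm{site}}^\square<p_{c,\mathrm{site}}^\square$.
\end{itemize}
These residual crossing events are the content of Propositions~\ref{C1}, \ref{C2} and~\ref{C2b}; your proposal does not contain a mechanism for handling them.

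A second, smaller gap: for bond percolation and for site percolation on the UIHPQ, the peeling naturally creates a \emph{free} segment on the boundary, so the black-segment process is not a random walk with i.i.d.\ increments. It coincides with such a walk only while the free segment is nonzero, and one must show (Lemma~\ref{cvgprobaxi} and its analogue) that the returns of the free-segment process to $0$ are sparse enough---of order $n^{1/3+o(1)}$---not to affect the $3/2$-stable scaling limit. Your ``write down the one-step transition probabilities'' step does not account for this.

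Finally, the paper's limit is one-dimensional: a single spectrally negative $3/2$-stable process for the black-segment length, and the formula $\frac{1}{\pi}\arccos\bigl(\frac{b-a}{a+b}\bigr)$ comes directly from its overshoot law (Proposition~\ref{stableovershoot}), not from a two-dimensional hitting problem.
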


The question of the universality of Cardy's formula for periodic plane graphs is an important open problem in probability theory, known as \textit{Cardy's universality conjecture}. Here, the randomness of the planar maps we consider makes the percolation models easier to study. 

\begin{Rks}
We believe that for both the computation of the site percolation threshold and the asymptotics of crossing probabilities, our methods apply in more general settings as long as the \textit{spatial Markov property} of Section \ref{SpatialMarkovProperty} holds. This includes the generalized half-planar maps of \cite{angel_classification_2015}, see also \cite{ray_geometry_2014} (in the ``tree-like" phase of these maps, the critical point is $1$ while in the ``hyperbolic" phase, although the crossing probability vanishes at criticality, its rate of convergence could be investigated), and also covers the models defined in \cite{bjornberg_recurrence_2014} and \cite{stephenson_local_2014}, which have recently been proved to satisfy a version of the spatial Markov property in \cite{budd_peeling_2015}. In particular, we are able to compute the site percolation threshold in the UIHPT, which is already provided in \cite{angel_percolations_2015}. The difference is that our method is less sensitive to boundary conditions, and also furnishes a universal formula for the site percolation thresholds, in the spirit of \cite{angel_percolations_2015}. We will discuss this in greater detail in Section \ref{SectionPCQ}.

Moreover, as we were finishing writing this paper, we became aware of the very recent preprint \cite{bjornberg_site_2014} by Björnberg and Stef\'ansson, that also deals with site percolation on the UIHPQ. This paper provides upper and lower bounds for the percolation threshold $p_{c,\rm{site}}^{\square}$, but not the exact value $5/9$. Our study of the universality of crossing probabilities is also totally independent of \cite{bjornberg_site_2014}.

\end{Rks}

\begin{Ack} I deeply thank Grégory Miermont for his help, and a lot of very stimulating discussions and ideas. I am also very grateful to Nicolas Curien for his remarks that led to Theorem \ref{ThUniversality}, and to the anonymous referee whose suggestions significantly improved this work.
\end{Ack}

\section{Random planar maps and percolation models}\label{SectionRPM}

We first recall the construction of the random planar maps we will focus on in the next part, and some important properties of these maps.

\subsection{Definitions and distributions on planar maps}

Let us first consider finite planar maps, i.e. proper embeddings of finite connected graphs in the sphere $\mathbb{S}^2$ (more precisely their equivalence class up to orientation-preserving homeomorphisms of the sphere). The faces of a planar map are the connected components of the complement of the embedding, and the degree of a face is the number of oriented edges this face is incident to (with the convention that the face incident to an oriented edge is the face on its left). Every planar map we consider is \textit{rooted}: there is a distinguished oriented edge called the \textit{root} of the map, and the tail vertex of this edge is the \textit{origin} of the map.

We focus more precisely on \textit{$p$-angulations} (the case $p=3$ corresponds to \textit{triangulations} and $p=4$ to \textit{quadrangulations}), i.e. finite planar maps whose faces all have the same degree $p$, and \textit{generalized} $p$-angulations, i.e. planar maps whose faces all have degree $p$, except possibly for a finite number of distinguished faces which can have arbitrary degrees. These faces are called \textit{external faces} of the $p$-angulation (by contrast with \textit{internal faces}), and are constrained to have a simple boundary (i.e. their embeddings are Jordan curves). In this setting, an inner vertex of the map is a vertex that do not belong to an external face. Finally, triangulations are supposed to be 2-connected (or type-2), that is to say, multiple edges are allowed but loops are not.

Recall that $*$ denote either of the symbols $\triangle$ or $\square$, where $\triangle$-angulations stands for type-2 triangulations and $\square$-angulations for quadrangulations. For $n\geq 1$, denote by $\mathcal{M}_n^*$ the set of generalized $*$-angulations that have $n$ inner vertices, so that $\mathcal{M}_f^*:=\cup_{n\geq 1}{\mathcal{M}_n^*}$ is the set of finite generalized $*$-angulations. By convention, we define $\mathcal{M}_0^*$ as the set of maps that have no inner vertices. We endow the set $\mathcal{M}_f^*$ of finite $*$-angulations with the \textit{local} topology, induced by the distance $d_{\text{loc}}$ defined for every $\boldsymbol{\mathrm{m}},\boldsymbol{\mathrm{m}}' \in \mathcal{M}_f^*$ by

$$d_{\mathrm{loc}}(\boldsymbol{\mathrm{m}},\boldsymbol{\mathrm{m}}'):=(1+\sup\{r\geq 0 : B_r(\boldsymbol{\mathrm{m}}) \sim B_r(\boldsymbol{\mathrm{m}}')\})^{-1},$$where $B_r(\boldsymbol{\mathrm{m}})$ is the planar map given by the ball of radius $r$ around the origin for the usual graph distance in the following sense: $B_0$ contains only the origin of the map, and $B_{r}$ is made of all the vertices at graph distance less than $r$ from the origin, with all the edges linking them. 

With this distance, $(\mathcal{M}_f^*,d_{\mathrm{loc}})$ is a metric space, and we define the set of (finite and infinite) $*$-angulations to be the completed space $\mathcal{M}^*$ of $(\mathcal{M}_f^*,d_{\mathrm{loc}})$. The elements of $\mathcal{M}_{\infty}^*:=\mathcal{M}^* \setminus \mathcal{M}_f^*$ can also be seen as infinite planar maps, in the sense that they can be defined as the proper embedding of an infinite, locally finite graph into a non-compact surface, dissecting the latter into a collection of simply connected regions (see the Appendix of \cite{curien_view_2013} for greater details). 

The first result of convergence in law of random maps for the local topology (Theorem 1.8 in \cite{angel_uniform_2003} $(\triangle)$, Theorem 1 in \cite{krikun_local_2005} $(\square)$) states as follows. For $n \geq 0$, let $\nu^*_n$ be the uniform measure on the set of $*$-angulations with $n$ vertices $(\triangle)$, respectively $n$ faces $(\square)$. We have

$$\nu_{n}^* \underset{n \rightarrow +\infty}{\Longrightarrow}  \nu_{\infty}^*,$$in the sense of weak convergence, for the local topology. The probability measure $\nu_{\infty}^*$ is supported on infinite $*$-angulations of the plane, and called the law of the Uniform Infinite Planar $*$-angulation (UIPT and UIPQ respectively in short). Note that there is an alternative construction in the quadrangular case, for which we refer to \cite{chassaing_local_2006}, \cite{menard_two_2010} and \cite{curien_view_2013}.

\vspace{3mm}

In the next part, we will focus on a slightly different model introduced by Angel in \cite{angel_scaling_2004}, that has an infinity boundary and nicer properties.

For $m\geq 2$, a (finite or infinite) generalized $*$-angulation and a unique external face of degree $m$ is called $*$-angulation of the $m$-gon. For $n\geq 0$ and $m\geq 2$, we denote by $\mathcal{M}_{n,m}^*$ the set of $*$-angulations of the $m$-gon with $n$ inner vertices \textbf{rooted on the boundary}, and $\varphi_{n,m}^*$ its cardinality. By convention, $\mathcal{M}_{0,2}^*=\mathcal{M}_{0}^*$. Note that $\varphi_{n,m}^{\square}=0$ for $m$ odd, so that we implicitly restrict ourselves to the cases where $m$ is even for quadrangulations. The quantity $\varphi_{n,m}^*$ being positive and finite for $n\geq 0$ and $m\geq 2$ (see \cite{tutte_enumeration_1968} for exact enumerative formulas), we can define the uniform probability measure on $\mathcal{M}_{n,m}^*$, denoted by $\nu_{n,m}^*$. Asymptotics for the numbers $(\varphi_{n,m}^*)_{n\geq 0,m\geq 2}$ are known and universal in the sense that

$$\varphi_{n,m}^* \underset{n \rightarrow +\infty}{\sim} C_*(m)\rho_*^nn^{-5/2} \quad \text{ and } \quad C_*(m) \underset{m \rightarrow +\infty}{\sim} K_*\alpha_*^m\sqrt{m},$$ where $\rho_\triangle=27/2$, $\alpha_\triangle=9$, $\rho_\square=12$, $\alpha_\square=\sqrt{54}$ and $K_*>0$ (see for instance the work of Gao, or more precisely \cite{krikun_explicit_2007} for 2-connected triangulations, and \cite{bouttier_distance_2009} for quadrangulations).

\vspace{3mm}

We now recall the construction of the uniform infinite half-planar maps. The Uniform Infinite Planar $*$-angulation of the Half-Plane, or UIHP-$*$, is a probability measure supported on infinite half-planar $*$-angulations defined by the following limits. 

\begin{Thbis}{(Theorem 2.1 in \cite{angel_scaling_2004})} For every $m\geq 2$, we have

$$\nu_{n,m}^* \underset{n \rightarrow +\infty}{\Longrightarrow}  \nu_{\infty,m}^*,$$in the sense of weak convergence, for the local topology. The probability measure $\nu_{\infty,m}^*$ is supported on infinite $*$-angulations of the $m$-gon and called the law of the UIPT and UIPQ of the $m$-gon, respectively.
Moreover, we have

$$\nu_{\infty,m}^* \underset{m \rightarrow +\infty}{\Longrightarrow} \nu_{\infty,\infty}^*,$$in the sense of weak convergence, for the local topology. The probability measure $\nu_{\infty,\infty}^*$ is called the law of the Uniform Infinite Half-Planar $*$-angulation (half-plane UIPT and UIPQ, or also UIHPT and UIHPQ respectively in short).
\end{Thbis}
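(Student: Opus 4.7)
The plan is to establish both convergences at once by checking that the cylinder events $\{B_r(\boldsymbol{\mathrm{m}}) = \mathfrak{b}\}$, which generate the local topology, have probabilities that converge, and that the limits assemble into a bona fide probability measure on infinite maps. Weak convergence in $d_{\mathrm{loc}}$ is equivalent to convergence on cylinder events, and tightness is automatic once the limit cylinder probabilities sum to one over all patterns of radius $r$.

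First I would fix $m \geq 2$ and a finite pattern $\mathfrak{b}$: a $*$-angulation of the $m$-gon rooted on the boundary, together with a marking of its ``holes'' (the faces where the rest of the map is to be glued), each with a prescribed perimeter $p_1, \dots, p_k$ inherited from $\mathfrak{b}$. Under $\nu_{n,m}^*$, the event $\{B_r = \mathfrak{b}\}$ holds precisely when each hole is filled by a $*$-angulation with perimeter $p_i$ and some number $n_i$ of inner vertices, with $\sum n_i = n - v(\mathfrak{b})$. Hence
\begin{equation*}
\nu_{n,m}^*(B_r = \mathfrak{b}) \;=\; \frac{1}{\varphi_{n,m}^*} \sum_{n_1 + \cdots + n_k = n - v(\mathfrak{b})} \prod_{i=1}^k \varphi_{n_i,p_i}^*.
\end{equation*}
Plugging in $\varphi_{n,m}^* \sim C_*(m)\rho_*^n n^{-5/2}$ and using the standard Abelian lemma for convolutions of sequences with $n^{-5/2}$ tails, the dominant contribution comes from configurations in which a single $n_j$ absorbs almost all the $n$ inner vertices while the others remain bounded. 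This yields the explicit limit
\begin{equation*}
\lim_{n\to\infty} \nu_{n,m}^*(B_r = \mathfrak{b}) \;=\; \rho_*^{-v(\mathfrak{b})} \sum_{j=1}^k \frac{C_*(p_j)}{C_*(m)} \prod_{i \ne j} Z_{p_i}^*(1/\rho_*),
\end{equation*}
where $Z_p^*(x) := \sum_{n\ge 0} \varphi_{n,p}^* x^n$, which converges at $1/\rho_*$ thanks to the polynomial correction $n^{-5/2}$. A consistency check (the limits for $B_{r+1}$ marginalize to those for $B_r$, and sum to one at each radius) follows from an elementary generating function identity, and the Kolmogorov extension theorem produces the probability measure $\nu_{\infty,m}^*$, which concentrates on infinite $*$-angulations because each $\nu_{n,m}^*$ does and the diameter of $B_r$ is a local observable.

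For the second limit, I would fix a finite half-planar pattern $\mathfrak{b}$ rooted on its infinite boundary, and evaluate the formula above under $\nu_{\infty,m}^*$ for $m$ large enough that $\mathfrak{b}$ fits inside an $m$-gon. Exactly one complementary region then has perimeter $p_j = m + O(1)$ (the one carrying the ``bulk'' of the boundary polygon), while the other $p_i$ stay bounded. The refined asymptotic $C_*(m) \sim K_* \alpha_*^m \sqrt{m}$ makes the ratio $C_*(p_j)/C_*(m)$ converge to $\alpha_*^{p_j - m}$, yielding a clean formula depending only on $\mathfrak{b}$, and one defines $\nu_{\infty,\infty}^*(B_r = \mathfrak{b})$ to be this limit; again a summation identity ensures these values define a probability measure.

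The main obstacle will be the bookkeeping for the holes when $B_r$ touches the boundary of the $m$-gon. In that regime the boundary arc is cut into two pieces by $B_r$, so the ``outer'' hole has a perimeter made up of internal edges of $\mathfrak{b}$ together with two distinct boundary sub-arcs, and one must track both sub-perimeters in the combinatorial decomposition. The second delicate point is justifying the Abelian step rigorously: one needs a uniform domination to push $\sum_{n_1+\cdots+n_k = n - v(\mathfrak{b})} \prod \varphi_{n_i,p_i}^* / \varphi_{n,m}^*$ through the $n^{-5/2}\rho_*^n$ asymptotics, term by term and in total, so that the limit exchanges with the sum. Once both points are handled, everything reduces to transparent generating function computations based only on the universal asymptotic exponents $-5/2$, $\rho_*$, and $\alpha_*$.
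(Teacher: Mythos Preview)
The paper does not give its own proof of this statement: it is quoted verbatim as Theorem~2.1 of \cite{angel_scaling_2004}, with the remark that the triangular case is in that reference and the quadrangular case ``easily extends'' (with an alternative construction in \cite{curien_uniform_2015}). So there is no in-paper argument to compare against.

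That said, your outline is precisely the standard route taken in the original references (Angel--Schramm for the full plane, Angel for the half-plane): compute cylinder probabilities by the ``ball plus holes'' decomposition, feed in the enumeration asymptotics $\varphi_{n,m}^*\sim C_*(m)\rho_*^n n^{-5/2}$, and use the $n^{-5/2}$ tail to force a single macroscopic hole. The two points you flag as obstacles are exactly the ones that require work in those papers: (i) when $B_r$ meets the boundary of the $m$-gon the complementary region is a single hole whose perimeter combines an internal arc with the remaining boundary arc(s), and getting the combinatorics of this perimeter right is what makes the $m\to\infty$ limit reduce to a clean power of $\alpha_*$; (ii) the ``Abelian'' step needs a genuine domination (typically via the crude bound $\varphi_{n_i,p_i}^*\le C\rho_*^{n_i}$ uniformly, which follows from convergence of $Z_{p_i}^*$). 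One minor correction: in your displayed limit you should also check that the terms with $j$ ranging over the ``small'' holes are negligible rather than contributing, since only the hole that can accommodate unbounded perimeter (for the second limit) or unbounded volume (for the first) survives; as written your sum over $j$ suggests all holes contribute symmetrically, which is correct for the $n\to\infty$ limit at fixed $m$ but not for the $m\to\infty$ step, where only the distinguished boundary hole carries the $C_*(p_j)/C_*(m)$ ratio that stabilises.
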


This result is stated in \cite{angel_scaling_2004} for triangulations but easily extends to the quadrangular case, for which an alternative construction is also provided in Section 6.1 of \cite{curien_uniform_2015}. See also \cite{angel_classification_2015} for a characterization of these distributions.

We now give important properties of the previous measure, which justifies in particular the naming ``half-plane". The proof of such a result is based upon the inductive construction of the measure $\nu_{\infty,\infty}^*$ by a \textit{peeling process}, as described in \cite{angel_scaling_2004}. This is also illustrated in Figure \ref{UIHPTView}.

\begin{Propbis}
The probability measure $\nu_{\infty,\infty}^*$ is supported on infinite $*$-angulations of the half-plane with infinite boundary, and rooted on the boundary. 

Namely, these maps can be defined as the proper embedding of an infinite, locally finite connected graph in the upper half-plane $\mathbb{H}$ such that all the faces are finite and have degree 3 $(*=\triangle)$, respectively degree 4 $(*=\square)$. (In the case of triangulations, there are also no self-loops.) In particular, the restriction of this embedding to the infinite boundary of $\mathbb{H}$ is isomorphic to the graph of $\mathbb{Z}$ and the triangles (respectively quadrangles) exhaust the whole half-plane. 

Finally, the probability measure $\nu_{\infty,\infty}^*$ enjoys a re-rooting invariance property, in the sense that it is preserved under shifts of the root by one edge on the boundary.
\end{Propbis}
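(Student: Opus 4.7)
The plan is to establish the three assertions separately, relying on the peeling-process construction of $\nu_{\infty,\infty}^*$ sketched by Angel in \cite{angel_scaling_2004} together with the double limit $\nu_{n,m}^* \Rightarrow \nu_{\infty,m}^* \Rightarrow \nu_{\infty,\infty}^*$ stated in the theorem above.

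First I would check that $\nu_{\infty,\infty}^*$ is supported on infinite $*$-angulations rooted on the boundary. The ``rooted on the boundary'' condition is a local condition (already determined by $B_0$), so it passes to the local limit immediately, since each $\nu_{n,m}^*$ is supported on maps rooted on the boundary of the external face. Infiniteness of the underlying map follows from the fact that for each fixed $r$, the probability under $\nu_{n,m}^*$ that $B_r$ contains all of the map goes to zero as $n\to\infty$ (by the enumerative asymptotics $\varphi_{n,m}^* \sim C_*(m)\rho_*^n n^{-5/2}$). To obtain an infinite boundary on both sides of the root, I would use the peeling process on $\nu_{\infty,m}^*$: each peeling step reveals either a new inner vertex (lengthening the revealed boundary of the unexplored region) or identifies the picked edge with another boundary edge (swallowing a finite region). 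Combining the asymptotics for $\varphi_{n,m}^*$ with those for $C_*(m) \sim K_* \alpha_*^m\sqrt{m}$ lets one compute the one-step peeling law, from which one reads off that the boundary-length process is a random walk with positive drift; after passing $m\to\infty$, this guarantees that in $\nu_{\infty,\infty}^*$ the boundary remains bi-infinite almost surely.

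For the embedding statement, the internal faces of the approximating maps carry degree exactly $p \in \{3,4\}$ and the degree of any given face is determined by a sufficiently large ball around the root; hence this face-degree constraint survives the local limit. Combined with planarity (also a local property preserved under $d_{\mathrm{loc}}$), this produces an infinite locally finite planar map which, thanks to the bi-infinite-boundary assertion, admits the desired embedding in $\overline{\mathbb{H}}$ with $\partial\mathbb{H}\simeq\mathbb{Z}$; that the internal faces exhaust $\mathbb{H}$ is then equivalent to every vertex being at finite graph distance from the root, which is automatic for a locally finite connected map. The loop-free property in the triangular case is again local, hence preserved.

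Finally, for re-rooting invariance I would argue from the prelimit. For fixed $n$ and $m$, shifting the distinguished oriented root edge by one unit along the boundary of the external face is an involution of $\mathcal{M}_{n,m}^*$, so the uniform measure $\nu_{n,m}^*$ is exactly invariant under this shift. The shift is continuous for the local topology \emph{once the boundary is known to be bi-infinite}, because then the ball of radius $r$ around the new root is a bounded local modification of the ball of radius $r+1$ around the old root. Hence invariance passes to the weak limits $n\to\infty$ and $m\to\infty$. I expect the main obstacle to be the peeling computation of the one-step transition law and the resulting drift estimate for the boundary-length process, since this is the substantive input that feeds into both the bi-infinite boundary and, through it, the re-rooting invariance.
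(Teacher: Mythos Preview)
The paper does not give a self-contained proof here; it only remarks that the result follows from Angel's inductive peeling construction of $\nu_{\infty,\infty}^*$ in \cite{angel_scaling_2004}. In that approach one \emph{defines} the measure by peeling faces one at a time starting from a bi-infinite boundary, so the half-plane embedding, the bi-infinite boundary, one-endedness, and re-rooting invariance (the one-step peeling law does not depend on which boundary edge is chosen) are immediate by construction rather than verified a posteriori.

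Your route via the double limit is a legitimate alternative, but two points deserve correction. First, your argument for the bi-infinite boundary through a drift estimate is both unnecessary and mis-stated: the boundary-length process obtained by peeling $\nu_{\infty,m}^*$ is not a random walk (its transition law depends on the current perimeter), and in the $m\to\infty$ regime the mean increment $\mathbb{E}(\mathcal{E}^*-\mathcal{R}^*-1)$ is exactly $0$ by Proposition~\ref{PropACExp}, not positive. The bi-infinite boundary follows far more directly: under $\nu_{\infty,m}^*$ the external face is an $m$-cycle, so for every fixed $r$ and $m>2r$ the ball $B_r$ already contains at least $r$ boundary edges on each side of the root; this is a local event and passes to the weak limit without any peeling computation. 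Second, and more substantively, your sketch does not establish that the limiting map is \emph{one-ended}. This is what forces the half-plane topology and gives content to ``the triangles (resp.\ quadrangles) exhaust the whole half-plane''; it is not a local property and does not follow from connectivity and local finiteness alone, so your sentence equating exhaustion with every vertex being at finite graph distance from the root is not a proof. In Angel's construction one-endedness is built in (the unexplored region is a single half-plane at every stage); in the double-limit approach you would need a separate argument, for instance showing that the complement of $B_r$ has a unique infinite component with probability tending to $1$ along the prelimit. Finally, a small slip: the root shift along the boundary of the $m$-gon is a bijection of $\mathcal{M}_{n,m}^*$, not an involution.
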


\begin{figure}[h]
\begin{center}
\includegraphics[scale=1.6]{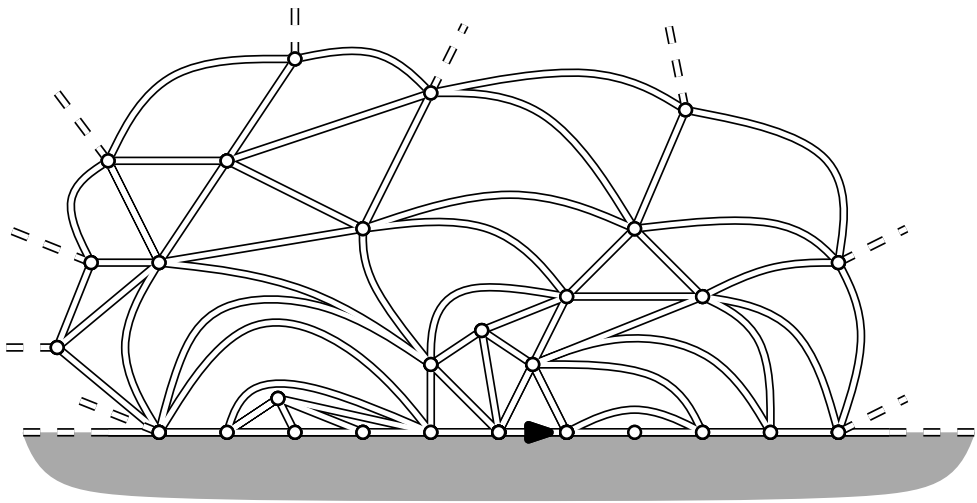}
\end{center}
\caption{An embedding of the UIHPT in the upper half-plane.}
\label{UIHPTView}
\end{figure}

In order to detail the outstanding properties of the measure $\nu_{\infty,\infty}^*$, we have to define another measure on finite planar maps, which gives an equal weight to maps with a fixed number of faces or, equivalently, of inner vertices. This measure is called the \textit{Boltzmann measure} (or \textit{free measure} in \cite{angel_growth_2003}). Let $m\geq 2$ and set

\begin{equation}\label{partfct}
  Z^*_m:=\sum_{n \geq 0}{\varphi_{n,m}^*\rho_*^{-n}},
\end{equation} which is exactly the generating function of $*$-angulations of the $m$-gon at the radius of convergence. Now, let us interpret the quantities $(Z^*_m)_{m\geq 2}$ as partition functions of a probability measure. This yields to the $*$-Boltzmann distribution of the $m$-gon, denoted by $\mu^*_m$, which is the probability measure on finite $*$-angulations of the $m$-gon defined for every $\boldsymbol{\mathrm{m}}\in \mathcal{M}_{n,m}^*$ by

\begin{equation}
\mu_m^*(\boldsymbol{\mathrm{m}}):=\frac{\rho_*^{-n}}{Z^*_m}. 
\end{equation} A random variable with law $\mu_m^*$ is called a Boltzmann $*$-angulation of the $m$-gon. Moreover, the asymptotic behaviour of $(Z^*_m)_{m\geq 2}$ is also known and given by \begin{equation}\label{equivZ}Z^*_m \underset{m \rightarrow +\infty}{\sim} \iota_*m^{-5/2}\alpha_*^m,\end{equation} for some constant $\iota_*>0$ and $\alpha_*$ as above.

The Boltzmann measures are particularly important because these objects satisfy a branching property, that we will identify on our uniform infinite $*$-angulations of the half-plane as the \textit{spatial} (or \textit{domain}) \textit{Markov property}. The reason why we will get this property on our infinite map is that the UIHP-$*$ can also be obtained as the limit of Boltzmann measures of the $m$-gon when $m$ becomes large.

\begin{Thbis}{(Theorem 2.1 in \cite{angel_scaling_2004})} We have $\mu^*_m \underset{m \rightarrow +\infty}{\Longrightarrow}  \nu_{\infty,\infty}^*$ in the sense of weak convergence, for the local topology.
\end{Thbis}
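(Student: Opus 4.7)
The plan is to prove the convergence of cylinder probabilities by tracking the spatial structure of both measures through a common \emph{peeling process}, and then identify the limit with $\nu_{\infty,\infty}^*$ via its peeling construction given in \cite{angel_scaling_2004}.

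First, I would reduce weak convergence for the local topology to convergence of cylinder probabilities: it suffices to prove that for every $r \geq 0$ and every admissible finite map $\boldsymbol{\mathrm{b}}$, $\mu_m^*(B_r(\boldsymbol{\mathrm{m}}) = \boldsymbol{\mathrm{b}}) \to \nu_{\infty,\infty}^*(B_r(\boldsymbol{\mathrm{m}}) = \boldsymbol{\mathrm{b}})$ as $m \to \infty$. It is more convenient to work with the filled-in ball $B_r^{\bullet}$ (the ball together with the finite components of its complement), which has a simple outer boundary structure and can be uncovered by running a suitable peeling algorithm until every vertex at graph distance at most $r$ from the root has been revealed; this defines a stopping time $\tau_r$ for the peeling exploration.

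Second, I would compute the one-step peeling transitions of $\mu_m^*$ explicitly. By definition the Boltzmann weighting $\rho_*^{-n}/Z_m^*$ decouples across the connected components of the unexplored region, so that the probability of a given peeling move starting from an unexplored component with perimeter $p$ is a product of a factor $\rho_*^{-q}$ (for any newly revealed inner vertex) and a ratio of the form $Z_{p_1}^* \cdots Z_{p_s}^*/Z_p^*$, where $p_1, \ldots, p_s$ are the perimeters of the resulting components. This Markovianity is the finite-$m$ avatar of the spatial Markov property discussed in Section \ref{SpatialMarkovProperty}.

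Third, I would pass to the limit $m \to \infty$ using the asymptotics (\ref{equivZ}): since each peeling step modifies the main perimeter by $O(1)$ and swallows only bounded pieces, the large perimeter is of the form $m + O(1)$ and the ratios $Z_{m+\delta}^*/Z_m^*$ converge to $\alpha_*^{\delta}$. The peeling transitions of $\mu_m^*$ therefore converge to explicit limits that, by the inductive peeling construction of $\nu_{\infty,\infty}^*$ in \cite{angel_scaling_2004}, are exactly those of $\nu_{\infty,\infty}^*$. Since $\tau_r$ is almost surely finite under $\nu_{\infty,\infty}^*$, a coupling or an approximation by bounded-horizon peelings yields $\mu_m^*(B_r^{\bullet} = \boldsymbol{\mathrm{b}}^{\bullet}) \to \nu_{\infty,\infty}^*(B_r^{\bullet} = \boldsymbol{\mathrm{b}}^{\bullet})$, and hence the desired cylinder convergence.

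The main obstacle is ensuring that the peeling exploration under $\mu_m^*$ does not degenerate as $m$ varies: concretely, one needs uniform tightness of $\tau_r$ in $m$. This requires some control on the perimeter process of the peeling chain under $\mu_m^*$ (a stopped random walk whose jumps have the heavy tail encoded by the $m^{-5/2}$ prefactor in (\ref{equivZ})), and in particular the exclusion of a scenario in which the unexplored region shrinks too fast toward a small perimeter before $B_r$ is covered. Once this tightness is in place, the pointwise convergence of transition kernels lifts to convergence of the full explored region after finitely many steps, and the proof is complete.
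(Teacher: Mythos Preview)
The paper does not contain a proof of this statement: it is quoted as an external result (Theorem~2.1 in \cite{angel_scaling_2004}) and used as a black box. There is therefore no ``paper's own proof'' to compare your proposal against.

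That said, your sketch is essentially the standard argument, and it is the one Angel uses in \cite{angel_scaling_2004}: compute the one-step peeling transitions under $\mu_m^*$ as ratios of partition functions, use the asymptotics $Z_m^* \sim \iota_* m^{-5/2}\alpha_*^m$ to show these transitions converge to the peeling transitions that \emph{define} $\nu_{\infty,\infty}^*$, and conclude by a cylinder argument. The tightness issue you flag (that the exploration under $\mu_m^*$ might reach small perimeter before revealing $B_r$) is real but mild: one can simply bound, for each fixed finite peeling history, the probability that some step swallows more than $m/2$ boundary edges, and this goes to zero by the tail behaviour of $Z_k^*\alpha_*^{-k}$. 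Alternatively, one first truncates to a fixed number $N$ of peeling steps, lets $m\to\infty$ (where convergence is immediate since only finitely many ratios are involved), and then lets $N\to\infty$ using that $\tau_r<\infty$ a.s.\ under the limit measure.
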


\subsection{Spatial Markov property and configurations}\label{SpatialMarkovProperty}

We now describe the so-called \textit{peeling} argument (introduced by Angel), whose principle is to suppose the whole map unknown and to reveal it face by face. 

Let us consider a random map $M$ which has the law of the UIHP-$*$, and a face $\mathsf{A}$ of $M$ which is incident to the root. Then, we \textit{reveal} or \textit{peel} the face $\mathsf{A}$, in the sense that we suppose the whole map unknown and work conditionally on the configuration of this face. We now consider the map $M\setminus\mathsf{A}$, which is the map $M$ deprived of the edges of $\mathsf{A}$ that belong to the boundary (in that sense, we also say that we \textit{peel} the root edge). By convention, the connected components of this map are the submaps obtained by cutting at a single vertex of $\mathsf{A}$ that belongs to the boundary.

We can now state a remarkable property of the UIHPT and the UIHPQ that will be very useful for our purpose, and which is illustrated in Figure \ref{SpatialMP}. This should be interpreted as a branching property of the measure $\nu^*_{\infty,\infty}$.

\begin{Thbis}{(Spatial Markov property, Theorem 2.2 in \cite{angel_scaling_2004})} Let $M$ be a random variable with law $\nu^*_{\infty,\infty}$, and $\mathsf{A}$ the face incident to the root edge of $M$. 

Then, $M\setminus\mathsf{A}$  has a unique infinite connected component, say $M'$, and at most one $(*=\triangle)$ or two $(*=\square)$ finite connected components, say $\tilde{M}_1$ and $\tilde{M}_2$. Moreover, $M'$ has the law $\nu^*_{\infty,\infty}$ of the UIHP-$*$, and $\tilde{M}_1$ and $\tilde{M}_2$ are Boltzmann $*$-angulations of a $m$-gon (for the appropriate value of $m\geq 2$ which is given by the configuration of the face $\mathsf{A}$). 

Finally, all those $*$-angulations are independent, in particular $M'$ is independent of $M\setminus M'$. This property still holds replacing the root edge by another oriented edge on the boundary, chosen independently of $M$.

\end{Thbis}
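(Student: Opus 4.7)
The plan is to deduce the spatial Markov property from the convergence $\mu^*_m \Longrightarrow \nu^*_{\infty,\infty}$ stated just above, by first establishing a decomposition identity at the level of the finite Boltzmann measures and then passing to the limit $m \to \infty$.

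First, I would work under $\mu^*_m$ and enumerate the admissible configurations $c$ for the face $\mathsf{A}$ incident to the root. For triangulations there are essentially two types: either $\mathsf{A}$ has its apex at a new inner vertex (so $M \setminus \mathsf{A}$ is a single $*$-angulation of the $(m+1)$-gon), or its apex is glued to some other boundary vertex (splitting the complement into one finite and one infinite-perimeter piece for the finite measure, i.e.\ an $m_1$-gon and an $m_2$-gon with $m_1+m_2 = m$). For quadrangulations there is an extra case producing up to two finite submaps. In each case, since the Boltzmann weight $\rho_*^{-n}$ depends only on the total number of inner vertices, counting gives
\[
\mu^*_m\bigl(\mathsf{A} = c,\ M\setminus\mathsf{A} = \boldsymbol{\mathrm{m}}_1 \sqcup \boldsymbol{\mathrm{m}}_2\bigr)
= \rho_*^{-\varepsilon_c}\,\frac{Z^*_{m_1} Z^*_{m_2}}{Z^*_m}\,\mu^*_{m_1}(\boldsymbol{\mathrm{m}}_1)\,\mu^*_{m_2}(\boldsymbol{\mathrm{m}}_2),
\]
with $\varepsilon_c\in\{0,1\}$ accounting for a possibly new inner vertex. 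Thus the configuration of $\mathsf{A}$ has an explicit law and, conditionally on it, the finite pieces are independent Boltzmann maps.

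Second, I would take $m \to \infty$ using the precise asymptotics $Z^*_m \sim \iota_*\,m^{-5/2}\alpha_*^m$ from equation \eqref{equivZ}. The ratio $Z^*_{m_1} Z^*_{m_2}/Z^*_m$ has a finite positive limit exactly when at most one of $m_1, m_2$ tends to infinity (necessarily like $m$), while the others remain bounded; whereas configurations where two pieces simultaneously have perimeter of order $m$ are suppressed by the polynomial factor $m^{-5/2}$. Summing the resulting limits over all configurations should give probability one, which confirms that in the limit exactly one component of $M\setminus\mathsf{A}$ has infinite perimeter and the others keep a finite perimeter distributed explicitly.

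Third, I would conclude by a continuity argument. Since $\mu^*_m \Longrightarrow \nu^*_{\infty,\infty}$ for the local topology, the joint law of $(\mathsf{A}, M\setminus\mathsf{A})$ passes to the limit on cylindrical events involving a finite neighborhood of the root and prescribed finite components. The factorization from the first step survives in the limit, yielding: the infinite component $M'$ is a UIHP-$*$, the finite components $\tilde M_1$, $\tilde M_2$ are independent Boltzmann $*$-angulations of the appropriate $m$-gons, and $M'$ is independent of $M\setminus M'$. The re-rooting invariance asserted in the preceding proposition then allows one to replace the root edge by any boundary edge chosen independently of $M$.

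The main obstacle is the asymptotic analysis in the second step: one has to show that all the mass concentrates on configurations with a single infinite piece, and identify the exact limit probabilities of each configuration of $\mathsf{A}$ via the $Z^*_m$ asymptotics. Once this ``one-big-piece'' phenomenon is established, the independence and distributional identifications follow almost formally from the finite-$m$ decomposition and weak convergence.
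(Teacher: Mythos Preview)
The paper does not give its own proof of this statement: it is quoted verbatim as Theorem~2.2 of \cite{angel_scaling_2004} and used as a black box throughout. So there is no proof in the paper to compare against.

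That said, your outline is essentially the argument Angel gives in \cite{angel_scaling_2004}, and it is the right one. The factorization of $\mu^*_m$ according to the configuration of the root face is exactly how the Boltzmann weight is designed to behave; the asymptotics $Z^*_m \sim \iota_* m^{-5/2}\alpha_*^m$ then force a single macroscopic piece in the limit, and weak convergence for the local topology transfers the factorization to $\nu^*_{\infty,\infty}$. Two small bookkeeping points: the exponent $\varepsilon_c$ should be allowed to take the value $2$ in the quadrangular case (the root quadrangle may carry two new inner vertices), and the perimeter relation is not literally $m_1+m_2=m$ but rather an affine relation depending on the configuration (e.g.\ $m_1+m_2=m+1$ when a triangle rejoins the boundary). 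Neither affects the structure of the argument.

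Your identification of the ``one-big-piece'' step as the crux is accurate: once the limiting configuration probabilities are shown to sum to one (equivalently, no mass escapes to configurations with two pieces of comparable perimeter), the rest is formal. In Angel's paper this is exactly where the computation of the $q^*_k$'s in Propositions~\ref{tricase} and~\ref{quadcase} comes from, and the summability of those probabilities is checked directly from the explicit partition functions rather than only from the asymptotic $Z^*_m \sim \iota_* m^{-5/2}\alpha_*^m$.
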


\begin{figure}[h]
\begin{center}
\includegraphics[scale=1.6]{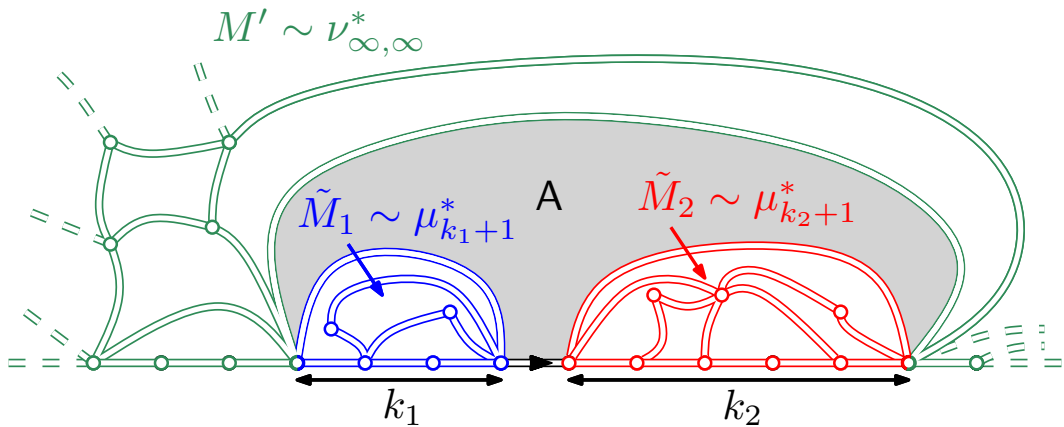}
\end{center}
\caption{The spatial Markov property.}
\label{SpatialMP}
\end{figure}

The peeling argument can now be extended to a \textit{peeling} or \textit{exploration} process, by revealing a new face in the unique infinite connected component of the map deprived of the last discovered face. The spatial Markov property ensures that the configurations for this face have the same probability at each step, and the re-rooting invariance of the UIHP-$*$ allows a complete freedom on the choice of the next edge to peel (as long as it does not depend on the unrevealed part of the map). This is the key idea in order to study percolation on the maps we consider, and has been used extensively in \cite{angel_scaling_2004} and \cite{angel_percolations_2015}.

\vspace{3mm}

We now describe all the possible configurations for the face $\mathsf{A}$ incident to the root in the UIHP-$*$, and the corresponding probabilities. Let us introduce some notations. On the one hand, some edges may lie on the boundary of the infinite connected component of $M\setminus\mathsf{A}$. These edges are called \textit{exposed} edges, and the (random) number of exposed edges is denoted by $\mathcal{E}^*$. On the other hand, some edges of the boundary may be enclosed in a finite connected component of the map $M\setminus\mathsf{A}$. We call them \textit{swallowed} edges, and the number of swallowed edges is denoted by $\mathcal{R}^*$. We will sometimes use the notation $\mathcal{R}_l^*$ (respectively $\mathcal{R}_r^*$) for the number of swallowed edges on the left (respectively right) of the root edge. (Note that the definitions of these random variables are robust with respect to the map model, but we may sometimes add the symbol $*$ to avoid confusion when the law of the map is not explicitly mentioned.) Finally, in the infinite maps we consider, an inner vertex is a vertex that do not belong to the boundary. The results are the followings. 

\begin{Prop}{(Triangulations case, \cite{angel_percolations_2015})}\label{tricase} There exists two configurations for the triangle incident to the root in the UIHPT (see Figure \ref{configT}).

\begin{enumerate}

\item The third vertex of the face is an inner vertex $(\mathcal{E}^{\triangle}=2,\mathcal{R}^{\triangle}=0)$. This event has probability $q^{\triangle}_{-1}=2/3$.

\item The third vertex of the face is on the boundary of the map, $k\geq 1$ edges on the left (respectively right) of the root $(\mathcal{E}^{\triangle}=1,\mathcal{R}^{\triangle}=k)$. This event has probability $q^{\triangle}_{k}=Z^{\triangle}_{k+1}9^{-k}$. 

\end{enumerate}

\begin{figure}[h!]
\begin{center}
\includegraphics[scale=1.6]{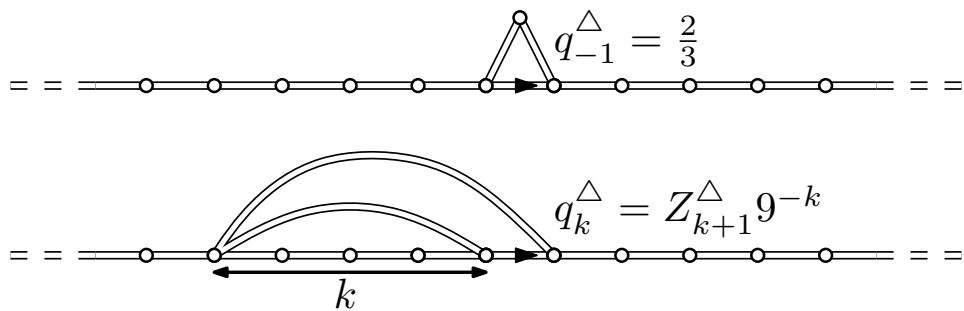}
\end{center}
\caption{The configurations of the triangle incident to the root (up to symmetries).}
\label{configT}
\end{figure}

\end{Prop}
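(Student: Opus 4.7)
The plan is to exploit the representation of $\nu^\triangle_{\infty,\infty}$ as the weak limit of the Boltzmann measures $\mu^\triangle_m$ (last theorem of the preceding subsection). Since each configuration of the face $\mathsf{A}$ incident to the root is determined by $B_1$ around the origin, its probability is a continuous function on $\mathcal{M}^\triangle$ for the local topology and therefore passes to the limit. Thus it suffices to compute $\mu^\triangle_m(\cdot)$ of each configuration and let $m \to \infty$.

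First I would enumerate the possibilities. Because triangulations are type-2 (no self-loops), the third vertex of $\mathsf{A}$ cannot coincide with either endpoint of the root edge, so only two families of configurations arise: (i) the third vertex is a fresh inner vertex, and (ii) the third vertex lies on the boundary, $k \geq 1$ edges away from the root on the left or on the right. In case (i), removing $\mathsf{A}$ leaves a generalized triangulation of the $(m+1)$-gon and the number of inner vertices drops by one; grouping by this quantity gives
\[
\mu^\triangle_m(\text{(i)}) \;=\; \frac{1}{Z^\triangle_m}\sum_{n\geq 0} \varphi^\triangle_{n,m+1}\,\rho_\triangle^{-(n+1)} \;=\; \frac{Z^\triangle_{m+1}}{\rho_\triangle\,Z^\triangle_m}.
\]
In case (ii), $\mathsf{A}$ cuts the $m$-gon into a triangulation of the $(k+1)$-gon (the swallowed part) and a triangulation of the $(m-k)$-gon (the remaining part); by independent enumeration,
\[
\mu^\triangle_m(\text{(ii)},k) \;=\; \frac{Z^\triangle_{k+1}\,Z^\triangle_{m-k}}{Z^\triangle_m}.
\]

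Next I would take $m \to \infty$, using the asymptotic $Z^\triangle_m \sim \iota_\triangle\, m^{-5/2} \alpha_\triangle^m$ of equation~\eqref{equivZ} with $\alpha_\triangle = 9$, $\rho_\triangle = 27/2$. This gives $Z^\triangle_{m+1}/Z^\triangle_m \to 9$ and $Z^\triangle_{m-k}/Z^\triangle_m \to 9^{-k}$, hence
\[
q^\triangle_{-1} \;=\; \frac{9}{27/2} \;=\; \frac{2}{3}, \qquad q^\triangle_{k} \;=\; Z^\triangle_{k+1}\,9^{-k}, \quad k\geq 1,
\]
as claimed.

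I do not expect a serious obstacle here; the argument is mainly bookkeeping. The only points that require care are: verifying that no loop-induced configuration is missed (which follows from the type-2 assumption), checking that the two cases exhaust the possibilities up to left/right symmetry, and making sure the limit passage is legitimate. The last point is handled by the local-topology continuity mentioned above, together with the fact that the probabilities of the listed events sum to $1$ in the limit (which one can verify as a consistency check using the tail estimate $Z^\triangle_{k+1} 9^{-k} = O(k^{-5/2})$).
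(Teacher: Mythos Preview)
Your argument is correct and is essentially the standard derivation (going back to Angel's original work and reproduced in \cite{angel_percolations_2015}): compute the configuration probabilities under the Boltzmann measure $\mu^\triangle_m$ by a root-face decomposition, then pass to the limit $m\to\infty$ using the asymptotics \eqref{equivZ}. The paper itself does not give a proof of this proposition; it merely quotes it from \cite{angel_percolations_2015}.

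One small inaccuracy worth fixing: the configuration ``the third vertex is the $k$-th boundary vertex'' is \emph{not} determined by $B_1$ for $k\geq 2$, since that vertex may lie at graph distance larger than~$1$ from the origin. What is true is that for each fixed $k$ the event is measurable with respect to $B_r$ for some finite $r=r(k)$, which is all you need for the local-topology continuity argument. Combined with your observation that the limiting probabilities sum to~$1$ (so no mass escapes), the passage to the limit is justified.
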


\begin{Prop}{(Quadrangulations case, \cite{angel_percolations_2015})}\label{quadcase} There exists three configurations for the quadrangle $\mathsf{A}$ incident to the root in a map $M$ which has the law of the UIHPQ (see Figure \ref{configQ}).

\begin{enumerate}

\item The face has two inner vertices $(\mathcal{E}^{\square}=3,\mathcal{R}^{\square}=0)$. This event has probability $q^{\square}_{-1}=3/8$

\item The face has three vertices on the boundary of the map, the third one being $k\geq 0$ edges on the left (respectively right) of the root. This event has probability $q_k^{\square}$, given by the following subcases:

\begin{itemize}

\item If $k$ is odd, the fourth vertex belongs to the infinite connected component of $M\setminus\mathsf{A}$ $(\mathcal{E}^{\square}=2,\mathcal{R}^{\square}=k)$. Then

$$q_k^{\square}=\frac{Z^{\square}_{k+1}\alpha_{\square}^{1-k}}{\rho_{\square}}.$$

\item If $k$ is even, the fourth vertex belongs to the finite connected component of $M\setminus\mathsf{A}$ $(\mathcal{E}^{\square}=1,\mathcal{R}^{\square}=k)$. Then $$q_{k}^{\square}=\frac{Z^{\square}_{k+2}\alpha_{\square}^{-k}}{\rho_{\square}}.$$

\end{itemize}

\item The face has all of its four vertices on the boundary of the map, and the quadrangle defines two segments along the boundary of length $k_1$ and $k_2$ - both odd $(\mathcal{E}^{\square}=1,\mathcal{R}^{\square}=k_1+k_2)$. This event has probability $q_{k_1,k_2}^{\square}=Z^{\square}_{k_1+1}Z^{\square}_{k_2+1}\alpha_{\square}^{-k_1-k_2}$. 

(This configuration should be splitted in two subcases, depending on whether or not the vertices of the face are all on the same side of the root edge - up to symmetries).

\end{enumerate}

\begin{figure}[h]
\begin{center}
\includegraphics[scale=1.6]{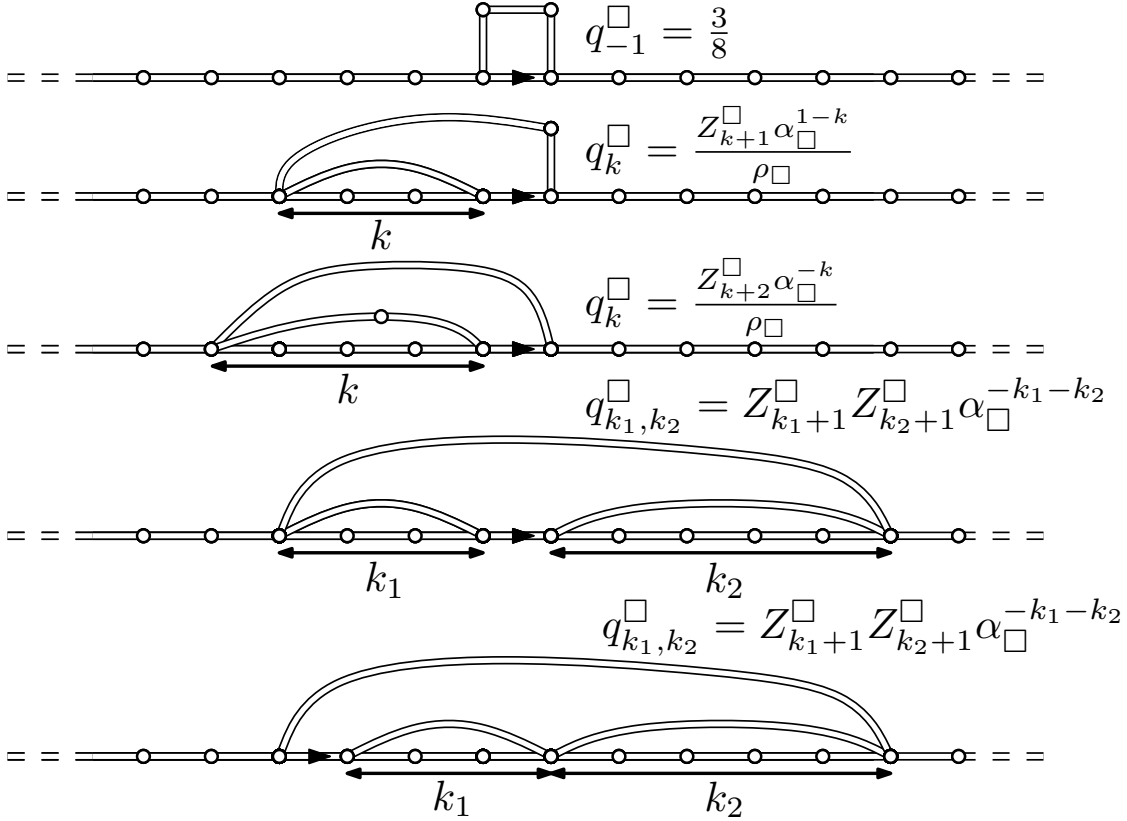}
\end{center}
\caption{The configurations of the quadrangle incident to the root (up to symmetries).}
\label{configQ}
\end{figure}

\end{Prop}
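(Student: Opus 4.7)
The plan is to compute the probability of each configuration under the Boltzmann measure $\mu^\square_m$ on quadrangulations of the $m$-gon, and then pass to the limit $m \to +\infty$ using the convergence $\mu^\square_m \Rightarrow \nu^\square_{\infty,\infty}$ recalled above together with the partition-function asymptotic \eqref{equivZ}. For each prescribed configuration of the quadrangle $\mathsf{A}$ incident to the root, removing $\mathsf{A}$ decomposes a sampled map into a remaining quadrangulation of a larger polygon, together with zero, one, or two swallowed finite quadrangulations of polygons whose boundary lengths are read off from the configuration. Since the Boltzmann weight $\rho_\square^{-n}$ is multiplicative in the number $n$ of inner vertices, the $\mu^\square_m$-probability of the prescribed configuration factorises as a product of partition functions $Z^\square_{\cdot}$, one for each piece, divided by $Z^\square_m$ and multiplied by a factor $\rho_\square^{-j}$, where $j$ is the number of inner vertices of $M$ that are absorbed into $\mathsf{A}$.

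The main bookkeeping step is, for each configuration, to identify: (i) the number $j$ of inner vertices absorbed by $\mathsf{A}$; (ii) the boundary lengths of the finite swallowed components (if any); and (iii) the boundary length $\ell_m = m + c$ of the new infinite component, where $c$ is a bounded integer depending only on the configuration. Taking $m \to +\infty$ and using $Z^\square_{m+c}/Z^\square_m \to \alpha_\square^{c}$ from \eqref{equivZ}, this $m$-dependent ratio tends to $\alpha_\square^{c}$. In Case 1 one has $j = 2$ and $c = 2$, so the limit is $\rho_\square^{-2}\alpha_\square^{2} = 54/144 = 3/8$. In Case 2 one has $j = 1$, and the swallowed boundary length is $k+1$ (if $k$ is odd) or $k+2$ (if $k$ is even), with $c$ equal to $1-k$ or $-k$ respectively. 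In Case 3 one has $j = 0$ and two finite polygons of boundary lengths $k_1 + 1$ and $k_2 + 1$ are split off, with $c = -k_1 - k_2$. The explicit formulas stated in the proposition then follow by direct substitution.

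The only nontrivial point is geometric: in each case one must correctly track how the three non-root edges of $\mathsf{A}$ are distributed between the new infinite component and the swallowed finite region(s), which is precisely the information encoded in Figure \ref{configQ}. The parity constraint that generalised quadrangulations exist only on polygons of even boundary length forces the split of Case 2 into two sub-cases according to the parity of $k$, and forces both $k_1, k_2$ to be odd in Case 3; this parity check is the only genuinely combinatorial obstacle, and once it is settled the limits are a one-line consequence of \eqref{equivZ}. The analogous but simpler argument (with one fewer non-root edge to distribute and no parity constraint) yields Proposition \ref{tricase} for triangulations, using instead $\alpha_\triangle = 9$ and $\rho_\triangle = 27/2$.
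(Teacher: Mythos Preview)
Your argument is correct and is precisely the standard derivation of the peeling probabilities: compute under the Boltzmann law $\mu^\square_m$ by decomposing along the root face, use multiplicativity of the weight $\rho_\square^{-n}$ to factorise into partition functions, and pass to the limit $m\to\infty$ via $Z^\square_{m+c}/Z^\square_m\to\alpha_\square^{c}$. The paper does not supply its own proof of this proposition; it simply quotes the result from \cite{angel_percolations_2015}, where the computation you outline is carried out. Your bookkeeping of $(j,c)$ and of the swallowed boundary lengths is accurate in all three cases, and the parity discussion is exactly the point that distinguishes the two sub-cases of Case~2.
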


 We finally give the expectations of $\mathcal{E}^*$ and $\mathcal{R}^*$, obtained by direct computation using exact formulas for the partition function (\ref{partfct}), see Section 2.2 in \cite{angel_percolations_2015}.

\begin{Prop}{(Proposition 3 in \cite{angel_percolations_2015})}\label{PropACExp} We have
\begin{itemize}
\item $\mathbb{E}(\mathcal{E}^{\triangle})=5/3$ \textrm{ and } $\mathbb{E}(\mathcal{R}^{\triangle})=2/3$.
\item $\mathbb{E}(\mathcal{E}^{\square})=2$ \textrm{ and } $\mathbb{E}(\mathcal{R}^{\square})=1$.
\end{itemize}
Moreover, the distribution of $\mathcal{E}^{\square}$ can be explicitly computed.
\begin{equation*}
\mathcal{E}^{\square}=
\left\lbrace
\begin{array}{ccc}
3  & \mbox{with probability} & 3/8\\
2 & \mbox{with probability} & 1/4\\
1 & \mbox{with probability} & 3/8\\
\end{array}\right..
\end{equation*}

\end{Prop}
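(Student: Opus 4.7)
The plan is to compute the expectations and the explicit law of $\mathcal{E}^{\square}$ by direct summation over the configurations listed in Propositions \ref{tricase} and \ref{quadcase}, using the known closed-form expressions for the boundary partition functions $Z^*_m$ and the tail behaviour \eqref{equivZ}.

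First I would exploit the normalization of the listed probabilities. For the UIHPT this yields the identity $\sum_{k\geq 1}Z^{\triangle}_{k+1}\,9^{-k} = 1/6$, and therefore
$$\mathbb{E}(\mathcal{E}^{\triangle}) \;=\; 2\cdot\frac{2}{3} + 1\cdot 2\sum_{k\geq 1}Z^{\triangle}_{k+1}\,9^{-k} \;=\; \frac{5}{3}.$$
For the UIHPQ the random variable $\mathcal{E}^{\square}$ takes only the values $1,2,3$, each arising from a clearly identified subfamily of configurations: the value $3$ comes from configuration~1, the value $2$ from configuration~2 with $k$ odd, and the value $1$ from configuration~2 with $k$ even together with configuration~3. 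Grouping the probabilities and splitting the series on $Z^{\square}_m$ according to the parity of $m$ (which can be done using the closed form of $F(x):=\sum_{m\geq 2}Z^{\square}_m\,x^m$ and its even/odd parts) produces the triple $(3/8,1/4,3/8)$, and hence $\mathbb{E}(\mathcal{E}^{\square})=2$. Throughout, the normalization of all configuration probabilities acts as a consistency check.

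For $\mathbb{E}(\mathcal{R}^*)$ I would compute the $k$-weighted series
$$\mathbb{E}(\mathcal{R}^{\triangle}) = 2\sum_{k\geq 1} k\,Z^{\triangle}_{k+1}\,9^{-k}, \qquad \mathbb{E}(\mathcal{R}^{\square}) = 2\sum_{k\geq 0} k\,q^{\square}_k + \sum_{\substack{k_1,k_2\geq 1 \\ k_1,k_2\text{ odd}}}(k_1+k_2)\,q^{\square}_{k_1,k_2},$$
each of which is of the form $xF'(x)$ at $x=\alpha_*^{-1}$. The closed-form expression for $F$ obtained from Tutte's enumerative formulas (see \cite{krikun_explicit_2007} for triangulations, \cite{bouttier_distance_2009} for quadrangulations, and the recollection in Section~2.2 of \cite{angel_percolations_2015}) reduces the evaluation to elementary algebra and produces the announced values $2/3$ and $1$. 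As a useful a~posteriori check, one notices $\mathbb{E}(\mathcal{E}^*)-1-\mathbb{E}(\mathcal{R}^*)=0$ in both models, i.e.\ the expected perimeter change under a peeling step is zero; this mirrors the martingale structure of the perimeter process in the associated finite Boltzmann $m$-gon.

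The main technical obstacle is the bookkeeping for configuration~3 in the quadrangulations case: it has to be split into the subcase where the three non-root boundary vertices of the peeled face lie on the same side of the root and the subcase where they straddle it, and the symmetric double sum over $(k_1,k_2)$ must be handled without double counting. Once this reorganisation is carried out and the sums are expressed in terms of the even and odd parts of $F$ and their derivatives at $\alpha_{\square}^{-1}$, the remaining computation is routine; the triangulations case is strictly simpler and can serve as a warm-up.
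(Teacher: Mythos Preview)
Your approach is correct and is precisely the direct computation the paper alludes to. Note, however, that the paper does \emph{not} actually prove this proposition: it is quoted verbatim as Proposition~3 of \cite{angel_percolations_2015}, and the only justification offered in the present paper is the sentence ``obtained by direct computation using exact formulas for the partition function~(\ref{partfct}), see Section~2.2 in \cite{angel_percolations_2015}''. So there is no proof in the paper to compare against; your write-up supplies exactly the computation that the authors defer to the reference.

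One small remark on your bookkeeping: in the formula for $\mathbb{E}(\mathcal{R}^{\square})$, configuration~3 contributes with a multiplicity factor (two placements for the ``same side'' subcase, two for the ``straddling'' subcase, up to the left/right symmetry you already account for); make sure this multiplicity is consistent with the normalization check, since getting the constant in front of the double sum wrong is the easiest way to miss the target value~$1$. Your a~posteriori check $\mathbb{E}(\mathcal{E}^*)-1-\mathbb{E}(\mathcal{R}^*)=0$ is a good safeguard for this.
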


\begin{Rk}
The configurations being completely symmetric, we have $\mathcal{R}_l^*\overset{(d)}{=} \mathcal{R}_r^*$, so that $\mathbb{E}(\mathcal{R}^*_l)=\mathbb{E}(\mathcal{R}^*_r)=\mathbb{E}(\mathcal{R}^*)/2$.
\end{Rk}

\subsection{Percolation models}

We now specify the percolation models we focus on. Recall that we are interested in Bernoulli percolation on the random maps we previously introduced, i.e. every site (respectively edge, face) is open (we will say coloured black, or refer to the value 1 in the following) with probability $p$ and closed (coloured white, or taking value 0) otherwise, independently of every other sites (respectively edges, faces). Note that this colouring convention is the same as in \cite{angel_growth_2003}, but opposed to that of \cite{angel_percolations_2015}. The convention for face percolation is that two faces are adjacent if they share an edge.

Here is a more precise definition of the probability measure $\mathbb{P}_p$ induced by our model. Denote by $\mathcal{M}$ the set of planar maps, and for a given map $\boldsymbol{\mathrm{m}}\in \mathcal{M}$, define the following measure on the set $\{0,1\}^{e(\boldsymbol{\mathrm{m}})}$ of colourings of this map (where $e(\boldsymbol{\mathrm{m}})$ is the set of the ``elements" (vertices, edges or faces) of $\boldsymbol{\mathrm{m}}$):

$$\mathcal{P}_p^{e(\boldsymbol{\mathrm{m}})}:=(p\delta_1+(1-p)\delta_0)^{\otimes {e(\boldsymbol{\mathrm{m}})}}.$$ We then define $\mathbb{P}_p$ as the following measure on the set $\left\lbrace(\boldsymbol{\mathrm{m}},c) : \boldsymbol{\mathrm{m}}\in \mathcal{M}, c\in\{0,1\}^{e(\boldsymbol{\mathrm{m}})}\right\rbrace$ of coloured maps:

$$\mathbb{P}_p(\mathrm{d}\boldsymbol{\mathrm{m}}\mathrm{d}c):=\nu_{\infty,\infty}^{*}(\mathrm{d}\boldsymbol{\mathrm{m}})\mathcal{P}_p^{e(\boldsymbol{\mathrm{m}})}(\mathrm{d}c).$$ In other words, $\mathbb{P}_p$ is the measure on coloured planar maps such that the map has the law of the UIHP-$*$ and conditionally on this map, the colouring is a Bernoulli percolation with parameter $p$. We slightly abuse notation here, since we denote by $\mathbb{P}_p$ the probability measure induced by every map and percolation model considered in this paper, but there is little risk of confusion - if there is, we assign the notation $*$ to the random variables we consider. In what follows, we will often work conditionally on the colouring of the boundary of the map, which we call the boundary condition.

\vspace{3mm}

We finally define the percolation threshold (or critical point) in this model. Denote by $\mathcal{C}$ the open percolation cluster of the origin of the map we consider (respectively the root edge for face percolation) and recall that the percolation event is the event that $\mathcal{C}$ is infinite. The percolation probability is defined for $p\in[0,1]$ by

$$\Theta^*(p):=\mathbb{P}_p(\vert \mathcal{C} \vert=+\infty).$$ A standard coupling argument proves that the function $\Theta^*$ is non-decreasing, so that there exists a critical point $p_c^*$, called the percolation threshold, such that

\begin{equation*}
\left\lbrace
\begin{array}{ccc}
\Theta^*(p)=0 & \mbox{if} & p<p^*_c\\
\Theta^*(p)>0 & \mbox{if} & p>p^*_c\\
\end{array}\right..
\end{equation*} Thus, the percolation threshold $p^*_c$ can also be defined by the identity $p^*_c:=\inf\{p\in [0,1] : \Theta^*(p)>0\}=\sup\{p\in [0,1] : \Theta^*(p)=0\}$. Note that both $\Theta^*$ and $p^*_c$ depend on the law of the infinite planar map and on the percolation model we consider.

\vspace{3mm}

In our setting, we can use the peeling argument to derive a zero-one law under $\mathbb{P}_p$ for every $p \in [0,1]$, in the sense that every event invariant to finite changes in the coloured map has probability 0 or 1 under $\mathbb{P}_p$. The idea is to assign i.i.d. random variables to each step of peeling and to apply Kolmogorov's zero-one law (see Theorem 7.2 in \cite{angel_growth_2003} for a proof in the full-plane case). A fortiori, we have that the probability of such an event is 0 or 1 under $\mathcal{P}_p^{e(\boldsymbol{\mathrm{m}})}$ for $\nu_{\infty,\infty}^{*}(\mathrm{d}\boldsymbol{\mathrm{m}})$-almost every map. Following usual arguments of percolation theory (see Theorem 1.11 in \cite{grimmett_percolation_1999}), this yields that the event that there exists an infinite open cluster has probability 0 or 1 under $\mathbb{P}_p$ and then if we introduce the function defined for $p \in [0,1]$ by

$$\Psi^*(p):=\mathbb{P}_p\left(\exists \ x\in e'(\boldsymbol{\mathrm{m}}) : \vert \mathcal{C}(x) \vert=+\infty \right),$$ where $\mathcal{C}(x)$ is the open percolation cluster of the site (respectively edge for face percolation) $x$ of the map, we get that 

\begin{equation*}
\left\lbrace
\begin{array}{ccc}
\Psi^*(p)=0 & \mbox{if} & \Theta^*(p)=0\\
\Psi^*(p)=1 & \mbox{if} & \Theta^*(p)>0\\
\end{array}\right..
\end{equation*} Of course, we also get this result under $\mathcal{P}_p^{e(\boldsymbol{\mathrm{m}})}$ for $\nu_{\infty,\infty}^{*}(\mathrm{d}\boldsymbol{\mathrm{m}})$-almost every map.

\vspace{3mm}

The map on which we study percolation being itself random, one should take care of the distinction between \textit{annealed} and \textit{quenched} statements concerning percolation. The above percolation threshold $p^*_c$ is annealed, or averaged on all maps, since it is also the infimum of values of $p$ such that a positive measure of maps under $\nu_{\infty,\infty}^{*}$ satisfy that the percolation event has positive probability under $\mathcal{P}_p^{e(\boldsymbol{\mathrm{m}})}$. We should then denote this threshold by $p^{(*,\mathrm{A})}_c$.

The quenched percolation threshold $p^{(*,\mathrm{Q})}_c$ is defined as the infimum of values of $p$ such that for $\nu_{\infty,\infty}^{*}(\mathrm{d}\boldsymbol{\mathrm{m}})$\textit{-almost every map}, the percolation event has positive probability under $\mathcal{P}_p^{e(\boldsymbol{\mathrm{m}})}$. In other words, if we introduce the function defined for every $p \in [0,1]$ by

$$\tilde{\Theta}^*(p):=\nu_{\infty,\infty}^{*}\left(\left\lbrace \boldsymbol{\mathrm{m}} \in \mathcal{M} : \mathcal{P}_p^{e(\boldsymbol{\mathrm{m}})} \left( \vert \mathcal{C} \vert=+\infty \right)>0 \right\rbrace\right),$$ then the critical thresholds $p^{(*,\mathrm{A})}_c$ and $p^{(*,\mathrm{Q})}_c$ are defined by

\begin{equation*}
\left\lbrace
\begin{array}{ccc}
\tilde{\Theta}^*(p)=0 & \mbox{if} & p<p_c^{(*,\mathrm{A})}\\
\tilde{\Theta}^*(p)=1 & \mbox{if} & p>p_c^{(*,\mathrm{Q})}\\
\tilde{\Theta}^*(p)\in (0,1) & \mbox{otherwise} & \\
\end{array}\right..
\end{equation*} These can be proved to exist by using again the coupling argument. Now, if $p>p_c^{(*,\mathrm{A})}$, we have from the above observation that 

$$\Psi^*(p)=\int_{\mathcal{M}}{\mathcal{P}_p^{e(\boldsymbol{\mathrm{m}})}\left(\exists \ x\in e(\boldsymbol{\mathrm{m}}) : \vert \mathcal{C}(x) \vert=+\infty\right)\nu_{\infty,\infty}^{*}(\mathrm{d}\boldsymbol{\mathrm{m}})}=1,$$ which yields that $\mathcal{P}_p^{e(\boldsymbol{\mathrm{m}})}\left(\exists \ x\in e(\boldsymbol{\mathrm{m}}) : \vert \mathcal{C}(x) \vert=+\infty\right)=1$ and thus $\mathcal{P}_p^{e(\boldsymbol{\mathrm{m}})} \left( \vert \mathcal{C} \vert=+\infty \right)>0$ for $\nu_{\infty,\infty}^{*}(\mathrm{d}\boldsymbol{\mathrm{m}})$-almost every map. This proves that $\tilde{\Theta}^*(p)=1$, and the identity $$p^{(*,\mathrm{A})}_c=p^{(*,\mathrm{Q})}_c.$$ In the next part, we will thus use the notation $p^*_c$ and give only annealed statements concerning percolation thresholds. Note that in particular, the absence of percolation at criticality of Theorem \ref{TheoremPCQ} implies that for $\nu_{\infty,\infty}^{*}(\mathrm{d}\boldsymbol{\mathrm{m}})$-almost every map, there is no percolation at the critical point under $\mathcal{P}_p^{e(\boldsymbol{\mathrm{m}})}$.

\subsection{Lévy $3/2$-stable process}\label{SectionLevyStable}

Let us define the (spectrally negative) $3/2$-stable process and give some important properties that will be used later. All the results can be found in \cite{bertoin_levy_1998}.

\begin{Def}The Lévy spectrally negative $3/2$-stable process is the Lévy process $(\mathcal{S}_t)_{t\geq 0}$ whose Laplace transform is given by $\mathbb{E}(e^{\lambda \mathcal{S}_t})=e^{t\lambda^{3/2}}$, for every $\lambda \geq 0$. Its Lévy measure is supported on $\mathbb{R}_{-}$ and given by

$$\Pi(dx)=\frac{3}{4\sqrt{\pi}}\vert x \vert^{-5/2}dx \mathbf{1}_{\{x<0\}}.$$ In particular, this process has no positive jumps. Finally, the Lévy spectrally negative $3/2$-stable process has a scaling property with parameter $3/2$, i.e for every $\lambda>0$ the processes $(\mathcal{S}_t)_{t\geq 0}$ and $(\lambda^{-3/2}\mathcal{S}_{\lambda t})_{t\geq 0}$ have the same law.
\end{Def}

Note that this process is also called Airy-stable process (ASP in short). We will need the so-called \textit{positivity parameter} of the process $\mathcal{S}$, defined by

$$\rho:= \mathbb{P}(\mathcal{S}_1\geq 0)$$ Applying results of \cite{bertoin_levy_1998} in our setting, we get the following.

\begin{Lem}{(Chapter 8 of \cite{bertoin_levy_1998})} The positivity parameter of the Lévy $3/2$-stable process is given by $\rho=2/3$.
\end{Lem}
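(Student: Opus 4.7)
The plan is to identify $\mathcal{S}$ as a strictly stable Lévy process with stability index $\alpha = 3/2$ and skewness $\beta = -1$, and then to invoke the explicit formula for the positivity parameter of such a process. Both parameters can be read off directly from the definition: the scaling property with exponent $3/2$ (equivalently, the Laplace exponent $\psi(\lambda) = \lambda^{3/2}$) identifies $\alpha = 3/2$, while the fact that the Lévy measure is supported on $\mathbb{R}_-$, i.e.\ the process has no positive jumps, forces the skewness to be $\beta = -1$.

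Having set this up, I would apply the classical Zolotarev-type formula for the positivity parameter of a strictly stable law,
$$\rho = \frac{1}{2} + \frac{1}{\pi \alpha} \arctan\!\bigl(\beta \tan(\pi \alpha / 2)\bigr),$$
which can be found in Chapter VIII of \cite{bertoin_levy_1998}. Specializing to $\alpha = 3/2$ and $\beta = -1$, we have $\tan(\pi\alpha/2) = \tan(3\pi/4) = -1$, so that $\beta \tan(\pi\alpha/2) = 1$ and hence $\arctan(1) = \pi/4$. Substitution yields
$$\rho = \frac{1}{2} + \frac{1}{\pi \cdot 3/2} \cdot \frac{\pi}{4} = \frac{1}{2} + \frac{1}{6} = \frac{2}{3},$$
as claimed.

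Alternatively, one can invoke the well-known specialization of the above identity to the spectrally one-sided case: for a spectrally negative $\alpha$-stable Lévy process with $\alpha \in (1,2)$, the positivity parameter is equal to $1/\alpha$, and applying this to $\alpha = 3/2$ gives $\rho = 2/3$ immediately. This is intuitively consistent with the fact that $\mathcal{S}$ has mean zero (since $\psi'(0) = 0$) and drifts continuously upwards between its only-downward jumps, so that it should spend more time positive than negative. The only real obstacle is to pinpoint the appropriate statement in \cite{bertoin_levy_1998}; once that is done the computation is a one-line evaluation of $\arctan(1)$.
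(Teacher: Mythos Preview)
Your proposal is correct and essentially matches the paper's approach: the paper simply cites Chapter~VIII of \cite{bertoin_levy_1998} without spelling out any computation, whereas you make the derivation explicit via the Zolotarev formula (and the equivalent specialization $\rho=1/\alpha$ for spectrally negative stable processes). There is nothing to correct; your added detail only clarifies what the paper leaves implicit.
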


This process will be very useful for our purpose because it is the scaling limit of a large class of random walks whose steps are in the domain of attraction of a stable distribution with parameter $3/2$.

\begin{Prop}{(Section 4 in \cite{angel_percolations_2015}, Chapter 8 of \cite{bingham_regular_1989}, \cite{bertoin_random_2006})}\label{stablerw} Let $X$ being a \textbf{centered} real-valued random variable such that $P(X>t)=o(t^{-3/2})$ and $P(X<-t)=ct^{-3/2}+o(t^{-3/2})$, for a positive constant $c$. Then, if $S$ is a random walk with steps distributed as $X$, i.e. $S_0=0$ and for every $n\geq 1$, $S_n=\sum_{i=1}^n{X_i}$ where the random variables $(X_i)_{i\geq 1}$ are independent and have the same law as X, we have

$$\left(\frac{S_{\lfloor \lambda t\rfloor}}{\lambda^{2/3}}\right)_{t\geq 0} \underset{\lambda \rightarrow +\infty}{\overset{(d)}{\longrightarrow}} \kappa(\mathcal{S}_t)_{t \geq 0},$$ in the sense of convergence in law for Skorokhod's topology, where $\kappa$ is an explicit constant (proportional to $c$).

\end{Prop}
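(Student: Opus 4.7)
The plan is to recognise this as a classical functional limit theorem for a random walk whose step distribution lies in the domain of attraction of a spectrally negative $3/2$-stable law, and to derive it from standard machinery rather than a new computation.

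First, I would verify that $X$ belongs to the domain of attraction of a spectrally negative $3/2$-stable law. The hypothesis $\mathbb{P}(X<-t)=ct^{-3/2}+o(t^{-3/2})$ asserts that the left tail of $X$ is regularly varying of index $-3/2$ with a (constant) slowly varying part, while the right tail is strictly lighter, $o(t^{-3/2})$. By Feller's criterion this is exactly the condition for $X$ to be attracted to the $\alpha$-stable law with $\alpha = 3/2$ and asymmetry parameter $\beta = -1$, i.e. the spectrally negative $3/2$-stable law $\mathcal{S}_1$. The associated norming sequence is of order $n^{1/\alpha}=n^{2/3}$, which matches the scaling $\lambda^{2/3}$ appearing in the statement; since $\alpha > 1$ and $X$ is centered, no further re-centering is required.

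Second, I would establish convergence of finite-dimensional marginals. A Taylor expansion of the characteristic function $\phi_X(\theta) = \mathbb{E}[e^{i\theta X}]$ at $0$, using Karamata-type asymptotics applied to the tail hypotheses, gives
\begin{equation*}
n \log \phi_X(\theta/n^{2/3}) \xrightarrow[n\to\infty]{} \Psi(\theta),
\end{equation*}
where $\Psi$ is the Lévy-Khintchine exponent of a spectrally negative $3/2$-stable law. The $o(t^{-3/2})$ bound on the right tail guarantees that no positive-jump term survives in the limit, and the coefficient $c$ of the left tail determines $\Psi$ up to a multiplicative constant. The joint convergence of multi-time marginals follows by combining this one-dimensional limit with the independence and stationarity of the increments of $S$.

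Third, I would upgrade marginal convergence to functional convergence in $D([0,\infty),\mathbb{R})$ equipped with Skorokhod's $J_1$-topology. This is the standard stable functional invariance principle for i.i.d. sequences (see Theorem VII.3.7 in Jacod-Shiryaev, or Chapter 8 of \cite{bingham_regular_1989}), whose tightness condition reduces to uniform control of the truncated second moment, already guaranteed by the tail hypotheses. Finally, the constant $\kappa$ is identified by matching Lévy measures: the vague limit of $n$ times the image of the law of $X$ under the rescaling $x \mapsto x/n^{2/3}$ has the form $c'|x|^{-5/2} \mathrm{d}x$ on $\mathbb{R}_-$, and comparison with the reference measure $\Pi(\mathrm{d}x) = \frac{3}{4\sqrt{\pi}}|x|^{-5/2}\mathrm{d}x \mathbf{1}_{x<0}$ together with the normalisation $\mathbb{E}[e^{\lambda \mathcal{S}_t}] = e^{t\lambda^{3/2}}$ yields $\kappa$ as an explicit positive multiple of $c^{2/3}$. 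The only real difficulty is bookkeeping of these normalisation constants connecting the tail coefficient, the Lévy measure of $\mathcal{S}$, and the prescribed Laplace exponent; the functional convergence itself is entirely classical.
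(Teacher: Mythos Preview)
Your sketch is a correct outline of the classical route to this functional stable limit theorem: domain-of-attraction verification via the tail hypotheses, characteristic-function expansion for the one-dimensional marginals, and then the standard i.i.d.\ functional invariance principle (e.g.\ Jacod--Shiryaev VII.3.7 or the references the paper itself cites) to upgrade to convergence in $D([0,\infty),\mathbb{R})$.

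There is nothing to compare your argument against, however, because the paper does not give a proof of this proposition at all. It is stated as a quoted result, with the attribution ``Section~4 in \cite{angel_percolations_2015}, Chapter~8 of \cite{bingham_regular_1989}, \cite{bertoin_random_2006}'' in the heading, and the paper simply invokes it wherever the scaling limit of the exploration walks is needed. So your write-up is not a different approach from the paper's --- it is a proof where the paper has none, and it is essentially the argument those references contain.

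One minor remark: your identification $\kappa \propto c^{2/3}$ is the correct scaling (the limiting L\'evy measure has tail $c\,x^{-3/2}$, and matching against the reference measure $\Pi$ of $\mathcal{S}$ forces $\kappa^{3/2}$ to be linear in $c$). The paper's parenthetical ``proportional to $c$'' is informal; your version is the precise statement.
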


We end this section with an important property of the $3/2$-stable process, which concerns the distribution of the so-called \textit{overshoot} at the first entrance in $\mathbb{R}_-$. It is a consequence of the fact that the so-called \textit{ladder height process} of $-(\mathcal{S}_t)_{t\geq 0}$ is a stable subordinator with index $1/2$ (see \cite{bertoin_levy_1998} for details). We use the notation $P_a$ for the law of the process started at $a$.

\begin{Prop}{(Section 3.3 in \cite{angel_scaling_2004}, Chapter 3 of \cite{bertoin_levy_1998}, Example 7 in \cite{doney_overshoots_2006})}\label{stableovershoot} Let $\tau:= \inf\{t\geq 0 : \mathcal{S}_t \leq 0\}$ denote the first entrance time of the $3/2$-stable process $\mathcal{S}$ in $\mathbb{R}_-$. Then, the distribution of the overshoot $\vert \mathcal{S}_{\tau}\vert$ of $\mathcal{S}$ at the first entrance in $\mathbb{R}_-$ is given for every $a,b>0$ by

$$P_{a}(\vert \mathcal{S}_{\tau} \vert > b)=\frac{1}{\pi}\arccos\left(\frac{b-a}{a+b}\right).$$ Moreover, the joint distribution of the undershoot and the overshoot $(\mathcal{S}_{\tau-},\vert \mathcal{S}_{\tau}\vert)$ of $\mathcal{S}$ at the first entrance in $\mathbb{R}_-$ is absolutely continuous with respect to the Lebesgue measure on $(\mathbb{R}_+)^2$.\end{Prop}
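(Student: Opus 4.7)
The plan is to reduce the overshoot problem for $\mathcal{S}$ to an overshoot problem for its descending ladder height subordinator $\hat{H}$, and then compute explicitly using Dynkin's joint density formula for stable subordinators. Since $\mathcal{S}$ is spectrally negative, a path started from $a > 0$ leaves $\mathbb{R}_+$ only by a negative jump; by standard fluctuation theory (Chapter 8 of \cite{bertoin_levy_1998}), the first-passage time $\tau$ coincides with $\sigma_a := \inf\{t \geq 0 : \hat{H}_t > a\}$, the range of $-\inf_{s \leq \cdot} \mathcal{S}_s$ coincides with the closed range of $\hat{H}$, and one has the pathwise identities $\mathcal{S}_{\tau-} = a - \hat{H}_{\sigma_a-}$ and $|\mathcal{S}_\tau| = \hat{H}_{\sigma_a} - a$. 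This reduces both assertions of the proposition to the corresponding statements for the undershoot and overshoot of $\hat{H}$ at level $a$, and by the preceding remark $\hat{H}$ is stable of index $1/2$.

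Next I would invoke Dynkin's formula: for a stable-$\alpha$ subordinator, whose potential measure is proportional to $u^{\alpha-1}\mathbf{1}_{u>0}\,du$ and whose Lévy measure is proportional to $w^{-1-\alpha}\mathbf{1}_{w>0}\,dw$, the joint law of the undershoot and the overshoot at level $a$ admits the explicit density
\[
P\bigl(\hat{H}_{\sigma_a -} \in du,\ \hat{H}_{\sigma_a} - a \in dv\bigr) = \frac{\alpha \sin(\pi\alpha)}{\pi}\, u^{\alpha-1}(a - u + v)^{-\alpha-1}\,du\,dv, \qquad 0 < u < a,\ v > 0.
\]
Absolute continuity with respect to Lebesgue measure on $(\mathbb{R}_+)^2$ is then immediate, which settles the second assertion.

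For the first assertion, I specialise $\alpha = 1/2$ and integrate the above density over $u \in (0,a)$ and $v \in (b,\infty)$. The $v$-integral is elementary and collapses the problem to
\[
P_a(|\mathcal{S}_\tau| > b) = \frac{1}{\pi}\int_0^a \frac{du}{\sqrt{u\,(a+b-u)}}.
\]
The substitution $u = a \sin^2 \theta$, followed by $x = \sin\theta$ and $y = x\sqrt{a/(a+b)}$, transforms the right-hand side into $\frac{2}{\pi}\arcsin\sqrt{a/(a+b)}$. Setting $\phi = \arcsin\sqrt{a/(a+b)}$, the double-angle identity $\cos(2\phi) = 1 - 2\sin^2\phi = (b-a)/(a+b)$ yields $2\phi = \arccos((b-a)/(a+b))$, and the announced formula follows.

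The main technical input is the ladder-process reduction of the first step, which ultimately relies on the absence of positive jumps of $\mathcal{S}$ and on the standard machinery of excursion theory; all necessary results are available in Chapter 8 of \cite{bertoin_levy_1998} and Example 7 of \cite{doney_overshoots_2006}, so the only genuine computation is the arcsin-to-arccos reduction at the end.
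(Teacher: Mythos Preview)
The paper does not actually prove this proposition: it only cites the references in the statement and remarks beforehand that the result is a consequence of the descending ladder height process of $-\mathcal{S}$ being a stable subordinator of index $1/2$. Your argument fleshes out precisely this route, and your derivation of the overshoot formula via Dynkin's density for the $1/2$-stable subordinator, together with the arcsin--arccos reduction, is correct and complete.

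There is, however, a small gap in your treatment of the joint law. The pathwise identity $\mathcal{S}_{\tau-} = a - \hat{H}_{\sigma_a-}$ does not hold: the quantity $a - \hat{H}_{\sigma_a-}$ is the running infimum $\inf_{s<\tau}\mathcal{S}_s$ just before first passage, whereas $\mathcal{S}_{\tau-}$ is the left limit of the process itself, which generically lies strictly above that infimum (the process is typically mid-excursion above its past minimum when the fatal negative jump occurs). Dynkin's formula for $\hat{H}$ therefore yields the joint law of $\bigl(\inf_{s<\tau}\mathcal{S}_s,\ |\mathcal{S}_\tau|\bigr)$, not of $\bigl(\mathcal{S}_{\tau-},\ |\mathcal{S}_\tau|\bigr)$. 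The absolute continuity of the latter pair follows instead from the compensation formula for the jump measure, or equivalently from the quintuple law of Doney and Kyprianou --- which is exactly the content of Example~7 in \cite{doney_overshoots_2006} cited in the statement --- expressing the joint density as the product of the killed potential density of $\mathcal{S}$ on $(0,\infty)$ and the L\'evy density $\Pi$, both explicit for the stable process. This does not affect your computation of the overshoot marginal, which relies only on the correct identity $|\mathcal{S}_\tau| = \hat{H}_{\sigma_a} - a$.
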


\section{Site percolation threshold on the UIHPQ}\label{SectionPCQ}

Throughout this section, we consider Bernoulli site percolation on a random map which has the law of the UIHPQ, and focus on the computation of the site percolation threshold, denoted by $p_{c,\rm{site}}^{\square}$.  

First of all, let us state roughly why is computing the site percolation threshold different on triangulations and quadrangulations. The main idea of the proof in the triangular case in \cite{angel_scaling_2004} is to follow the percolation interface between the open origin and the closed neighbours on its right, say, using an exploration process. When dealing with quadrangulations, with positive probability the revealed quadrangle has the configuration of Figure \ref{ExplQ}. From there, one should try to follow the interface drawn on the figure, but the circled black vertex should not be ignored, because it is also possibly part of the open percolation cluster of the origin. Roughly speaking, keeping track of those revealed open vertices on the boundary makes the size of the revealed part of the cluster hard to study through the exploration.

\begin{figure}[h]
\begin{center}
\includegraphics[scale=1.6]{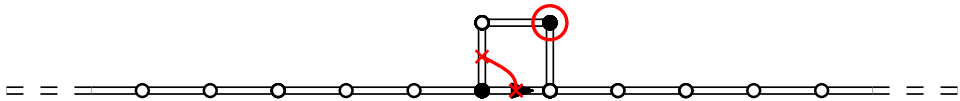}
\end{center}
\caption{Exploration process in the UIHPQ.}
\label{ExplQ}
\end{figure}

Our aim is now to prove Theorem \ref{TheoremPCQ}. In this statement, there is no condition on the initial colouring of the boundary, which is completely \textit{free} (a free vertex is by definition open with probability $p$, closed otherwise, independently of all other vertices in the map). In order to simplify the proof, but also for the purpose of Section \ref{SectionCrossingP}, we first work conditionally on the ``Free-Black-White" boundary condition presented in Figure \ref{initialcolouringPCQ}.

\begin{figure}[h]
\begin{center}
\includegraphics[scale=1.6]{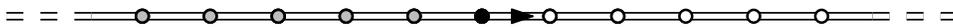}
\end{center}
\caption{The boundary condition for the site percolation threshold problem.}
\label{initialcolouringPCQ}
\end{figure}

The key here is to keep as much randomness as we can on the colour of the vertices and still to use an appropriate peeling process, following the ideas of \cite{angel_percolations_2015}. Since we work on the UIHPQ and in order to make the notation less cluttered, we omit the symbol ${\square}$ in what follows.

\subsection{Peeling process}

We want to reveal the map face by face in a proper way, which will define our peeling or exploration process. The strategy here is to reveal one by one the colour of the free vertices of the boundary, and to ``discard" or ``peel" the white vertices that are discovered in a sense we now make precise. To do so, we need an alternative peeling process, that we call ``vertex-peeling process". This process is well defined independently of the boundary condition, as long as we have a marked vertex on the boundary.

\begin{Alg}{(Vertex-peeling process)}\label{VertexPeelingProcess}
Consider a half-plane map which has the law of the UIHPQ and a marked vertex on the boundary.

Reveal the face incident to the edge of the boundary that is incident to the marked vertex on the left, and denote by $\mathsf{R}_r$ the number of swallowed edges on the right of this edge (without revealing the colour of the vertices that are discovered).

\begin{itemize}
\item If $\mathsf{R}_r>0$, the algorithm ends.
\item If $\mathsf{R}_r=0$, repeat the algorithm on the unique infinite connected component of the map deprived of the revealed face.
\end{itemize}

\end{Alg}

Let us now give the main properties of this algorithm, which is illustrated in Figure \ref{FigureVertexpeel} for the boundary condition that we will focus on in the next part (the arrow is pointed at the marked vertex).

\begin{Prop}\label{PropVertexPeel}The vertex-peeling process is well defined, in the sense that the marked vertex is on the boundary as long as the algorithm does not end, which occurs in finite time almost surely. Moreover, when the algorithm ends:

\begin{itemize}
\item The number of swallowed edges on the right of the marked vertex in the initial map has the law $\mathbb{P}_p(\mathcal{R}_r\in\cdot \mid \mathcal{R}_r>0)$ of the random variable $\mathcal{R}_r$ conditioned to be positive.

\item The unique infinite connected component of the map deprived of the revealed faces has the law of the UIHPQ and the marked vertex does not belong to this map. In that sense, this vertex has been peeled by the process.

\end{itemize}

\end{Prop}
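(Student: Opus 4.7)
The proof has four parts, one for each claim, and relies throughout on the spatial Markov property (Theorem 2.5) together with the case analysis from Proposition \ref{quadcase}. Write $v$ for the marked vertex and, at step $n\geq 0$, let $\mathsf{R}_r^{(n)}$ denote the number of swallowed boundary edges to the right of the edge peeled at that step, so that $\mathsf{R}_r^{(n)}$ has the law of $\mathcal{R}_r$.

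First I would show that $v$ remains on the boundary of the unique infinite connected component for as long as the algorithm continues. This is a short case check based on Proposition \ref{quadcase}: in every configuration for which $\mathcal{R}_r=0$ — the quadrangle has two inner vertices; or three boundary vertices with the third on the left of the root; or four boundary vertices with both swallowed segments on the left of the root — no boundary edge to the right of the peeled edge is affected, so $v$ (the right endpoint of that edge) is still a boundary vertex of the new infinite component, and the edge immediately to its left is well-defined, which legitimates the recursive call of the algorithm.

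Next I would prove termination. The spatial Markov property ensures that after one peel the unique infinite component of $M\setminus\mathsf{A}$ is again distributed as $\nu_{\infty,\infty}$, independently of the revealed face and of the finite swallowed pieces. Combined with re-rooting invariance, peeling the boundary edge immediately to the left of $v$ in that new map is distributionally identical to the first step. Iterating, the indicators $(\mathbf{1}_{\mathsf{R}_r^{(n)}>0})_{n\ge 0}$ form an i.i.d.\ Bernoulli sequence with parameter $p:=\mathbb{P}(\mathcal{R}_r>0)$. By Proposition \ref{PropACExp}, $\mathbb{E}(\mathcal{R}_r)=\mathbb{E}(\mathcal{R})/2=1/2>0$, hence $p>0$; so the stopping step $\tau:=\inf\{n:\mathsf{R}_r^{(n)}>0\}$ is geometric with parameter $p$ and in particular almost surely finite. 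The statement on the law of the final $\mathsf{R}_r$ follows directly: conditionally on $\tau=n$, the variable $\mathsf{R}_r^{(n)}$ has the law of $\mathcal{R}_r$ conditioned on $\{\mathcal{R}_r>0\}$, independently of $n$. Furthermore, since steps $0,\ldots,\tau-1$ all produced $\mathcal{R}_r=0$, no boundary edge to the right of $v$ has been altered before step $\tau$, so $\mathsf{R}_r^{(\tau)}$ counts exactly the edges of the \emph{initial} boundary lying to the right of $v$ that end up swallowed.

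Finally, iterating the spatial Markov property through all $\tau+1$ peels shows that the remaining infinite component is distributed as $\nu_{\infty,\infty}$, independently of the revealed faces and the finite components they detach. A last look at Proposition \ref{quadcase} — again a straightforward case inspection — shows that when $\mathcal{R}_r>0$ the cut between the infinite and the finite components occurs at a vertex strictly to the right of $v$, so at step $\tau$ the vertex $v$ lies inside a finite component and is excluded from the new infinite map. The main obstacle is really this geometric bookkeeping, particularly in Configuration 3 where the quadrangle has four boundary vertices: one has to check, in each of the subcases distinguishing whether the two swallowed arcs lie on the same or on opposite sides of the peeled edge, that $\mathcal{R}_r=0$ keeps $v$ as a boundary vertex while $\mathcal{R}_r>0$ places $v$ strictly inside a finite component rather than at the cut vertex.
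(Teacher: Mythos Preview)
Your proof is correct and follows essentially the same approach as the paper: both rely on the spatial Markov property to obtain an i.i.d.\ sequence of $\mathcal{R}_r$-distributed variables, identify the stopping time as the first positive value in that sequence, and deduce the claimed conditional law and the exclusion of $v$ from the final infinite component. The paper's proof is considerably terser and does not spell out the configuration-by-configuration case check from Proposition~\ref{quadcase}; your more detailed geometric bookkeeping is a welcome elaboration but not a different idea.
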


\begin{proof} The conservation of the marked vertex, say $v$, on the boundary is a consequence of the fact that $\mathsf{R}_r = 0$ at each step as long as the process is not over. Moreover, the spatial Markov property implies that the sequence of swallowed edges to the right of $v$ is an i.i.d. sequence of random variables which have the same law as $\mathcal{R}_r$. The algorithm ends when the first positive variable in that sequence is reached (which happens in finite time almost surely), whose law is thus $\mathbb{P}_p(\mathcal{R}_r\in\cdot \mid \mathcal{R}_r>0)$. It also equals the number of swallowed edges to the right of $v$ in the initial map by construction. Finally, since $\mathsf{R}_r > 0$ at the last step, the marked vertex $v$ is not on the boundary of the unique infinite connected component we consider when the process ends. \end{proof}

\begin{figure}
\begin{center}
\includegraphics[scale=1.6]{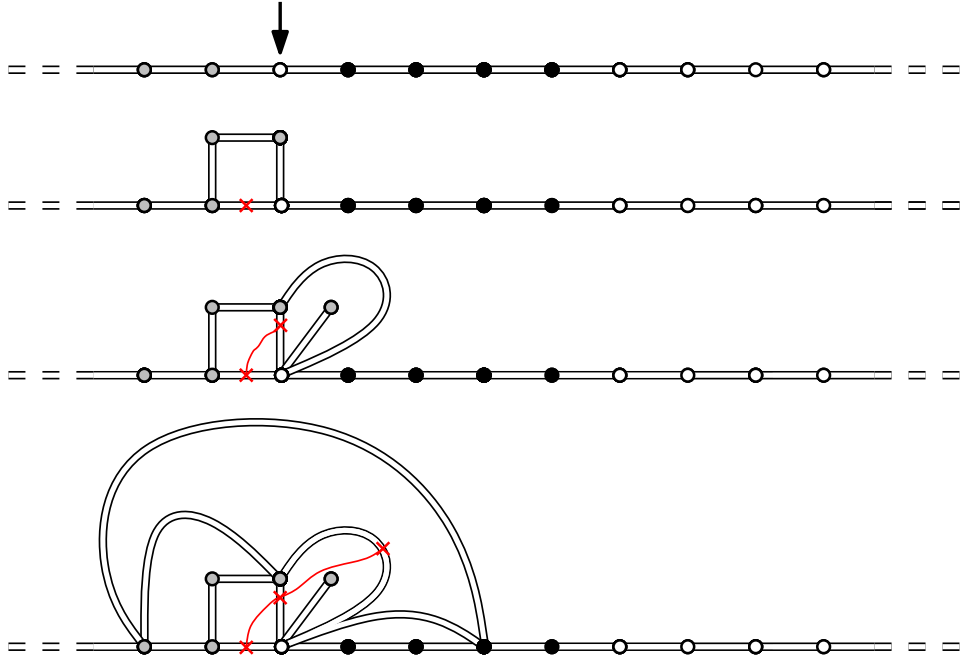}
\end{center}
\caption{An example of execution of the vertex-peeling process.}
\label{FigureVertexpeel}
\end{figure}

We are now able to describe the complete peeling process we focus on. In what follows, a ``Free-Black-White" boundary condition means in general that there are infinite free and white segments on the boundary (on the left and on the right respectively), and a finite black segment between them (Figure \ref{initialcolouringPCQ} is an example).

\begin{Alg}\label{PeelingPr}(Peeling process) Consider a half-plane map which has the law of the UIHPQ with a ``Free-Black-White" boundary condition.

Reveal the colour of the rightmost free vertex on the boundary.

\begin{itemize}
\item If it is black, repeat the algorithm.
\item If it is white, mark this vertex and execute the vertex-peeling process. 
\end{itemize} 
After each step, repeat the algorithm on the unique infinite connected component of the map deprived of the revealed faces. The algorithm ends when the initial open vertex of the boundary has been swallowed - i.e. does not belong to the map on which the algorithm should process.
\end{Alg}

\begin{Rk}
In the previous algorithms, the map we consider at each step of the peeling process is implicitly rooted at the next edge we have to peel, which is determined by the colouring of the boundary.\end{Rk}

As a consequence of the properties of the vertex-peeling process, we get that the whole peeling process is well defined, in the sense that the pattern of the boundary (Free-Black-White) is preserved as long as the algorithm does not end. Moreover, at each step, the planar map we consider has the law of the UIHPQ and does not depend on the revealed part of the map: the transitions of the peeling process are independent and have the same law. In particular, if we denote by $\mathcal{H}_n$, $c_n$ the number of swallowed edges at the right of the root edge and the colour of the revealed vertex at step $n$ of the exploration respectively, then $(\mathcal{H}_n,c_n)_{n\geq 0}$ are i.i.d. random variables.

\vspace{3mm}

The quantity we are interested in is the \textbf{length of the finite black segment} on the boundary of the map at step $n$ of the process, denoted by $B_n$. The process $(B_n)_{n \geq 0}$ is closely related to the percolation event by the following lemma.

\begin{Lem}\label{PercoEvent}
Denote by $\mathcal{C}$ the open cluster of the origin (open) vertex in our map. Then,

$$\left\lbrace B_n \underset{n \rightarrow + \infty}{\longrightarrow} + \infty \right\rbrace \subset \left\lbrace \vert \mathcal{C} \vert = + \infty \right\rbrace \quad \text{ and } \quad \left\lbrace \exists \ n \geq 0 : B_n = 0 \right\rbrace \subset \left\lbrace \vert \mathcal{C} \vert < + \infty \right\rbrace.$$

\end{Lem}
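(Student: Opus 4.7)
My plan is to establish the two inclusions separately.

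For the first inclusion $\{B_n \to +\infty\} \subset \{|\mathcal{C}| = +\infty\}$, I will argue that as long as the peeling algorithm has not terminated, the origin remains a boundary vertex of the current infinite map $M_n$, and the $B_n$ black boundary vertices of $M_n$ form a contiguous segment through the origin. Consecutive vertices of this segment are joined by boundary edges of $M_n$, which are also edges of $M$, so all $B_n$ of them belong to the open cluster $\mathcal{C}$ of the origin. This gives the pointwise bound $|\mathcal{C}| \geq B_n$ for every $n$; letting $n \to \infty$ yields the first inclusion.

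For the second inclusion, let $n_0$ be the first step at which $B_{n_0}=0$. At step $n_0$ the algorithm revealed a white boundary vertex and triggered the vertex-peeling process, whose last peeled face $\mathsf{A}$ swallowed whatever remained of the black segment. Denote by $v_l$ the marked (white) vertex of this vertex-peeling and by $v_r$ the rightmost UIHPQ boundary vertex swallowed by $\mathsf{A}$. Since $B_{n_0} = 0$ means no black vertex remains on the new boundary of the infinite component while $v_r$ is still present there as a pinch point of $\mathsf{A}$, $v_r$ cannot itself be black; and because the swallowing covers the whole original black segment, $v_r$ lies strictly past its rightmost vertex, hence in the original white segment, so $v_r$ is white. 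Together with the UIHPQ arc from $v_l$ to $v_r$, the face $\mathsf{A}$ therefore encloses a finite planar region $R$ containing the origin, and the two pinch vertices of $R$ on the UIHPQ boundary are both closed.

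I then claim $\mathcal{C} \subseteq R$, which immediately gives $|\mathcal{C}| \leq |R| < +\infty$. By planarity, any open path in $M$ leaving $R$ must cross a vertex of the boundary cycle of $R$, which consists of the UIHPQ arc between $v_l$ and $v_r$ together with the one or two edges of $\mathsf{A}$ forming the chord. Interior vertices of the arc have all their non-boundary $M$-neighbours inside $R$, since no edge of $M$ may cross the chord. The pinch vertices $v_l$ and $v_r$ are closed by construction. Finally, by Proposition \ref{quadcase}, any remaining ``fourth vertex'' $u$ of $\mathsf{A}$ is either entirely in the infinite component (Case~2 with $k$ odd, in which case $u$ does not even lie on the boundary of $R$) or entirely in the finite component (Case~2 with $k$ even, in which case all $M$-neighbours of $u$ lie in $R$ and $u$ provides no exit). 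Hence no open path can leave $R$, proving $\mathcal{C} \subseteq R$.

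The main obstacle will be the final step: translating the combinatorial statement of Proposition \ref{quadcase} that $u$ belongs to the finite connected component of $M\setminus\mathsf{A}$ into the geometric assertion that every $M$-neighbour of $u$ is contained in $R$. The analogue of Case~3, where $\mathsf{A}$ has four vertices on the boundary and produces two finite components, will require an additional small case check on the position of the origin within the two subcases of Proposition \ref{quadcase}, but the underlying argument is the same.
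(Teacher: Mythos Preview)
Your argument for the first inclusion is correct and matches the paper's.

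For the second inclusion, however, there is a genuine gap. You define the enclosing region $R$ as bounded by the single face $\mathsf{A}$ revealed at the \emph{final} step $n_0$ together with ``the UIHPQ arc from $v_l$ to $v_r$''. Two things go wrong:
\begin{itemize}
\item The marked vertex $v_l$ need not lie on the original UIHPQ boundary. Whenever an earlier vertex-peeling ($c_k=0$, $k<n_0$) has exposed new free vertices, the rightmost free vertex at a later step can be one of these exposed inner vertices. Then there is no ``UIHPQ arc from $v_l$ to $v_r$'' to speak of, and your description of the boundary cycle of $R$ collapses.
\item Even when $R$ makes sense, the claim $\mathcal{C}\subseteq R$ is false in general. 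At every earlier step $k<n_0$ with $c_k=0$, some leftmost black vertices of the segment were swallowed into a finite Boltzmann region disjoint from $R$. Those vertices are still joined to the origin by an open path in $M$, so they belong to $\mathcal{C}$ but not to $R$.
\end{itemize}

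The second point does not ruin the \emph{conclusion} (those earlier swallowed pieces are finite), but it shows you cannot encapsulate the whole cluster with one face and one boundary arc; you must account for the entire history of the peeling. The paper does exactly this: it proves inductively that for every $n$ the cluster $\mathcal{C}$ is infinite if and only if the cluster of the current black segment inside the infinite component $M_n$ is infinite, noting that each white-vertex peeling only moves a finite Boltzmann piece from one side of the ledger to the other. Once this invariant is in place, $B_{n_0}=0$ immediately forces $\mathcal{C}_{n_0}$ (and hence $\mathcal{C}$) to be finite, without any delicate planar-region bookkeeping.
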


\begin{proof}
	Let us start with the first point. It is easy to see that as long as the peeling algorithm is not over, any black vertex discovered on the boundary is connected by an explicit open path to the origin of the map, and thus belongs to $\mathcal{C}$. Then, if $B_n$ goes to infinity (which implies that the algorithm do not end), we get that $\mathcal{C}$ is also infinite.
	
	The second statement is less obvious. The point is to see that at any step of the process, the open percolation cluster $\mathcal{C}$ of the origin in the initial map is infinite if and only if the open percolation cluster $\mathcal{C}_n$ of the finite black segment \textit{in the infinite connected component that we consider at step $n$ of the process} is itself infinite. To see that, note that when a white vertex is discovered and peeled, the black vertices of the boundary that are swallowed are enclosed in a finite Boltzmann map and thus provide a finite number of vertices to the percolation cluster $\mathcal{C}$ (out of $\mathcal{C}_n$). To conclude, note that if $B_n=0$ for a fixed $n\geq 0$, then at the last step the finite black segment is enclosed in a finite region of the map (see Figure \ref{FigureProofLemma}) and thus $\mathcal{C}_n$ is also finite, which ends the proof. \end{proof}
	
\begin{figure}[h]
\begin{center}
\includegraphics[scale=1.6]{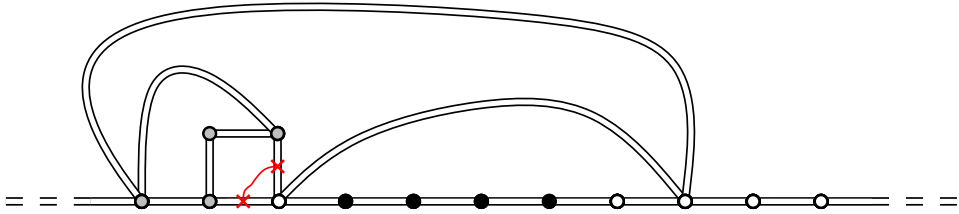}
\end{center}
\caption{The situation when the algorithm ends.}
\label{FigureProofLemma}
\end{figure}
	 
It is now sufficient to determine the behaviour of the process $(B_n)_{n \geq 0}$, which is remarkably simple as a consequence of the very definition of the peeling process and Proposition \ref{PropVertexPeel}.

\begin{Prop}\label{behaviourB}
The process $(B_n)_{n \geq 0}$ is a Markov chain with initial law $\delta_1$, whose transitions are given for $n\geq 0$ by
$$B_{n+1} = (B_n + 1 -\boldsymbol{1}_{c_n=0}\mathcal{H}_n)_+,$$ and which is absorbed at zero. Moreover, $(\mathcal{H}_n)_{n \geq 0}$ is a sequence of i.i.d. random variables and for every $n\geq 0$, conditionally on the event $\{c_n=0\}$, $\mathcal{H}_n$ has the law of $\mathcal{R}_r$ conditioned to be positive.
\end{Prop}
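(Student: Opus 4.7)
The plan is to derive the transition formula directly from the definitions of Algorithms~\ref{VertexPeelingProcess} and~\ref{PeelingPr}, and then to invoke the spatial Markov property of the UIHPQ, together with the independence of the Bernoulli colouring, to promote a single step into a Markovian i.i.d.\ structure. First I would fix the initial condition: Figure~\ref{initialcolouringPCQ} shows a unique black vertex on the boundary (the origin), so $B_0 = 1$ almost surely.

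To prove the transition formula, I would argue by case analysis on $c_n$, the colour of the rightmost free vertex revealed at step $n$. If $c_n = 1$, the revealed vertex joins the black segment on its left, giving $B_{n+1} = B_n + 1$. If $c_n = 0$, Algorithm~\ref{VertexPeelingProcess} is launched with the revealed vertex as the marked vertex $M$; by Proposition~\ref{PropVertexPeel}, it removes $\mathcal{H}_n$ edges on the right of $M$ (with $\mathcal{H}_n$ distributed as $\mathcal{R}_r$ conditionally on $\mathcal{R}_r > 0$) and ends with $M$ outside the infinite component. Labelling the boundary vertices on the right of $M$ as $b_1,\dots,b_{B_n}$ (the black segment, with $b_{B_n}$ the origin) followed by $w_1,w_2,\dots$, the first $\mathcal{H}_n$ edges to its right are $(M,b_1),(b_1,b_2),\ldots$, so the vertices $b_1,\dots,b_{\mathcal{H}_n-1}$ (together with $M$) are enclosed in the finite component. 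When $\mathcal{H}_n \leq B_n$ the new black segment has length $B_n - \mathcal{H}_n + 1$, while when $\mathcal{H}_n \geq B_n + 1$ every $b_i$ (including the origin) is swallowed and the new black segment has length $0$; both cases amount to $B_{n+1} = (B_n + 1 - \mathcal{H}_n)_+$.

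The Markov and i.i.d.\ properties then come from the spatial Markov property: after each step, the remaining infinite component has the law of the UIHPQ and is independent of the revealed part, while the still-unrevealed boundary vertices carry fresh independent Bernoulli($p$) colours. Consequently, $(c_n,\mathcal{H}_n)_{n\geq 0}$ is i.i.d., which yields the Markov property of $(B_n)$ and the stated conditional law of $\mathcal{H}_n$ given $c_n = 0$. Absorption at $0$ is then a direct consequence of the termination rule of Algorithm~\ref{PeelingPr}: since $b_{B_n}$ remains the rightmost black vertex throughout, the origin is swallowed exactly when $B_n$ first hits $0$, at which moment the algorithm stops. The step requiring the most care is the bookkeeping in the $c_n = 0$ case, where one must check that the count of swallowed edges on the right of $M$ matches the correct count of swallowed boundary vertices, and verify that the origin really does survive until the last step, so that the algorithm's termination matches the hitting time of $0$ for $(B_n)$.
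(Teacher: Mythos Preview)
Your proposal is correct and follows the same line as the paper: the statement is presented there as a direct consequence of the definition of Algorithm~\ref{PeelingPr} together with Proposition~\ref{PropVertexPeel}, without further argument, so your explicit case analysis on $c_n$ and the edge--vertex bookkeeping in the $c_n=0$ case simply fills in details the paper leaves implicit. The one point worth tightening is purely cosmetic: when you write ``$b_{B_n}$ remains the rightmost black vertex throughout'', make clear that you mean the \emph{origin} (whose position on the initial boundary is fixed) rather than a label that shifts with $n$.
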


In particular, the process $(B_n)_{n \geq 0}$ has the same law as a random walk started at 1 and killed at its first entrance in $\mathbb{Z}_-$, with steps distributed as the random variable $1-\boldsymbol{1}_{c_0=0}\mathcal{H}_0$.

\subsection{Computation of the percolation threshold}

We now compute the percolation threshold for the ``Free-Black-White" boundary condition of Figure \ref{initialcolouringPCQ}.

\begin{Prop}\label{pcquad}
For Bernoulli site percolation on the UIHPQ and conditionally on the ``Free-Black-White" boundary condition with a single open vertex, we have

$$p_{c,\rm{site}}^{\square}=\frac{5}{9}.$$Moreover, there is no percolation at the critical point almost surely: $\Theta^{\square}_{\rm{site}}\left(p_{c,\rm{site}}^{\square}\right)=0$.
\end{Prop}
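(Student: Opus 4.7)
The plan is to combine Proposition \ref{behaviourB} with Lemma \ref{PercoEvent} to reduce the problem to a one-dimensional random walk analysis. Explicitly, $(B_n)$ is the random walk started at $1$, killed at its first entrance in $\mathbb{Z}_-$, with step $X = 1 - \mathbf{1}_{c=0}\mathcal{H}$, where $c$ is a Bernoulli$(p)$ variable and $\mathcal{H}$ is an independent copy of $\mathcal{R}_r$ conditioned on $\mathcal{R}_r > 0$. Then $\Theta^\square_{\mathrm{site}}(p) > 0$ exactly when the event $\{B_n \to \infty\}$ has positive probability, whereas $\Theta^\square_{\mathrm{site}}(p) = 0$ as soon as $B_n$ hits $\mathbb{Z}_-$ almost surely.

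The key quantitative input is the drift
\[
\mathbb{E}[X] \;=\; 1 - (1-p)\,\mathbb{E}[\mathcal{H}],
\]
which I want to show vanishes precisely at $p = 5/9$. Using the symmetry $\mathcal{R}_l \stackrel{(d)}{=} \mathcal{R}_r$ together with $\mathbb{E}[\mathcal{R}^\square] = 1$ from Proposition \ref{PropACExp}, one has $\mathbb{E}[\mathcal{R}_r] = 1/2$, so $\mathbb{E}[\mathcal{H}] = \mathbb{E}[\mathcal{R}_r]/\mathbb{P}(\mathcal{R}_r > 0) = (1/2)/\mathbb{P}(\mathcal{R}_r > 0)$. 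The task therefore reduces to computing $\mathbb{P}(\mathcal{R}_r > 0)$. I would enumerate the configurations of Proposition \ref{quadcase}: Case 1 contributes nothing; in Case 2 the weight $q_k^\square$ accounts for the third vertex being on a prescribed side, and only the ``on the right'' instances with $k \geq 1$ count; in Case 3 one must carefully split the $q_{k_1,k_2}^\square$ weight between its two subcases (both finite segments on the same side versus one on each side) and retain the configurations that put at least one swallowed edge to the right. Summing the resulting series by means of the explicit expressions for $Z_m^\square$ at the radius of convergence should yield $\mathbb{P}(\mathcal{R}_r > 0) = 2/9$, hence $\mathbb{E}[\mathcal{H}] = 9/4$, and so the drift $1 - 9(1-p)/4$ vanishes precisely at $p = 5/9$.

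Once $p_c = 5/9$ is identified as the zero-drift value, the standard one-dimensional random walk dichotomy finishes the argument. For $p > 5/9$ the drift is positive, so by the strong law $B_n \to +\infty$ on an event of positive probability and $\Theta^\square_{\mathrm{site}}(p) > 0$. For $p < 5/9$ the drift is negative, so $B_n \to -\infty$ almost surely and the walk hits $\mathbb{Z}_-$, giving $\Theta^\square_{\mathrm{site}}(p) = 0$. At $p = 5/9$ the walk is centered but has infinite variance: from the asymptotic $Z_m^\square \sim \iota_\square m^{-5/2}\alpha_\square^m$ of \eqref{equivZ} the tail of $\mathcal{H}$—and hence the negative part of $X$—has the $3/2$-stable decay described in Proposition \ref{stablerw}, while the positive part is bounded by $1$. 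A centered integer-valued random walk in the domain of attraction of an $\alpha$-stable law with $\alpha \in (1,2)$ is recurrent, so $B_n$ again hits $\mathbb{Z}_-$ almost surely, and Lemma \ref{PercoEvent} yields $\Theta^\square_{\mathrm{site}}(5/9) = 0$.

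The principal obstacle is the bookkeeping leading to $\mathbb{P}(\mathcal{R}_r > 0) = 2/9$. It requires careful handling of the left/right symmetry for the peeling configurations of the quadrangle incident to the root, in particular the splitting of Case 3 of Proposition \ref{quadcase} into its two geometric subcases, and the summation of the resulting series using the closed form of $Z_m^\square$. Once this identity is established the rest of the proof is essentially standard random walk theory grafted onto the peeling framework of Section \ref{SpatialMarkovProperty}.
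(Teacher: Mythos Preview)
Your proposal is correct and follows essentially the same route as the paper: reduce via Proposition~\ref{behaviourB} and Lemma~\ref{PercoEvent} to the drift of the step $1-\mathbf{1}_{\{c=0\}}\mathcal{H}$, compute $\mathbb{E}[\mathcal{H}]=\mathbb{E}[\mathcal{R}_r]/\mathbb{P}(\mathcal{R}_r>0)=9/4$, and invoke standard random-walk recurrence/transience (the paper simply asserts null recurrence at $p=5/9$, whereas you justify it via the $3/2$-stable domain of attraction, which is a welcome addition). The only notable difference is in the bookkeeping for $\mathbb{P}(\mathcal{R}_r>0)=2/9$: rather than summing the series $\sum q_k^\square$ and $\sum q_{k_1,k_2}^\square$ directly as you outline, the paper's Remark exploits the known distribution of $\mathcal{E}^\square$ from Proposition~\ref{PropACExp} together with the auxiliary variables $\mathcal{V},\mathcal{V}_l,\mathcal{V}_r$ counting boundary vertices of the revealed face, which short-circuits most of the series manipulations and reduces the computation to the single value $q_0^\square=Z_2^\square/\rho_\square=1/9$.
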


\begin{proof}
The quantity which rules the behaviour of $(B_n)_{n\geq 0}$ is the expectation of its steps, $\mathbb{E}_p(1 -\boldsymbol{1}_{c_0=0}\mathcal{H}_0)$. It is obvious that $\mathbb{E}_p(c_0)=p$. Now the law of $\mathcal{H}_0$ conditionally on $\{c_0=0\}$ do not depend on $p$ by construction and we have (see Proposition \ref{PropACExp} and the following remark for details):

$$\mathbb{E}_p(\mathcal{H}_0\mid c_0=0)=\mathbb{E}(\mathcal{R}_r\vert \mathcal{R}_r>0)=\frac{\mathbb{E}(\mathcal{R}_r\boldsymbol{1}_{\{\mathcal{R}_r>0\}})}{\mathbb{P}(\mathcal{R}_r>0)}=\frac{\mathbb{E}(\mathcal{R}_r)}{\mathbb{P}(\mathcal{R}_r>0)}=\frac{9}{4}.$$Thus, 

$$\mathbb{E}_p(1-\boldsymbol{1}_{c_0=0}\mathcal{H}_0)=1-(1-p)\mathbb{E}_p(\mathcal{H}_0\mid c_0=0)=1-\frac{9}{4}(1-p)$$ We get that $\mathbb{E}_p(1 -\boldsymbol{1}_{c_0=0}\mathcal{H}_0)=0$ if and only if $p=5/9$. In the case where $p\neq 5/9$, standard arguments on the behaviour of simple random walks combined with Lemma \ref{PercoEvent} yield the first statement. Finally, when $p=5/9$, the random walk with steps distributed as $1 -\boldsymbol{1}_{c_0=0}\mathcal{H}_0$ is null recurrent, so that almost surely, there exists $n\geq 0$ such that $B_n=0$. This concludes the proof of the second assertion.\end{proof}

\begin{Rk}A crucial quantity in the above computation of the percolation threshold is $\mathbb{P}(\mathcal{R}_r>0)$. Let us describe in greater detail how to get the exact value for this probability. Recall that we work in the UIHPQ, and let $\mathcal{V}$ be the number of vertices of the face incident to the root that lie on the boundary. In particular, we have $\mathcal{V}\in \{2,3,4\}$. We also let $\mathcal{V}_l$ and $\mathcal{V}_r$ be the number of such vertices (strictly) on the left and on the right of the root edge respectively. Combined with the quantities $\mathcal{E}$ and $\mathcal{R}$, this allows to distinguish all the possible configurations of the face. Using the fact that the sum of the quantities $(q_k^{\square})_{k\geq 0}$ is the same for even and odd values and Proposition \ref{PropACExp}, we get

$$\mathbb{P}(\mathcal{E}=2)=\mathbb{P}(\mathcal{E}=1,\mathcal{V}<4)=\frac{1}{4}.$$ Thus, $\mathbb{P}(\mathcal{E}=1,\mathcal{V}=4)=1/8$, and using the symmetries of the configurations, we obtain

$$\mathbb{P}(\mathcal{E}=1,\mathcal{V}_l=\mathcal{V}_r=1)=\mathbb{P}(\mathcal{E}=1,\mathcal{V}_r=2)=\frac{1}{24} \quad \text{ and } \quad \mathbb{P}(\mathcal{E}=2,\mathcal{V}_r=1)=\frac{1}{8}.$$ In the case where $\mathcal{E}=1$ and $\mathcal{V}<4$, one should take care of the fact that the number of swallowed edges can be zero on both sides with positive probability. We get
	
$$\mathbb{P}(\mathcal{E}=1,\mathcal{V}<4,\mathcal{V}_r=1)=\frac{1}{8}-q^{\square}_0.$$ Now, 
	
$$\mathbb{P}(\mathcal{R}_r>0)=\mathbb{P}(\mathcal{V}_r>0)=\frac{1}{4}+\frac{1}{12}-q^{\square}_0=\frac{2}{9},$$ where we use the exact formulas for $q^{\square}_0$ and for the partition function (\ref{partfct}) given in Section 2.2 of \cite{angel_percolations_2015}, or in \cite{bouttier_distance_2009} (we find $Z^{\square}_2=4/3$, and thus $q^{\square}_0=1/9$).
\end{Rk}

\subsection{Universality of the percolation threshold}

We now want to discuss the \textit{universal} aspects of the above result. The first one is the universality of the percolation threshold with respect to the boundary condition. Namely, we now consider Bernoulli site percolation on the UIHPQ in the most natural setting which is the free boundary condition, and prove Theorem \ref{TheoremPCQ}.

\begin{proof}[Proof of Theorem \ref{TheoremPCQ}] First of all, we can work without loss of generality conditionally on the fact that the origin vertex of the map is open. We then use a peeling process which is heavily based on the previous one. 

Namely, we execute Algorithm \ref{PeelingPr} with the convention that we reveal the colour of the rightmost free vertex on the left of the origin (on the boundary).

The algorithm is paused when the finite black segment on the boundary has been completely swallowed. When this happens, two situations may occur, depending on the colour of the rightmost vertex $v_0$ of the boundary that is part of the last revealed face. By properties of the vertex-peeling process, such a vertex exists and lies on the right of the root edge (see Figure \ref{SituationTh1}).

\begin{enumerate}
\item If $v_0$ is white, then the open cluster of the origin is enclosed in a finite region of the map and the algorithm ends: the percolation event does not occur.
\item If $v_0$ is black, then the percolation event occurs if and only if the open cluster of $v_0$ in the unrevealed infinite connected component is itself infinite. Then, the algorithm goes on executing Algorithm \ref{PeelingPr} on this map, with $v_0$ as origin vertex. 
\end{enumerate} We let $(v_k)_{k\geq 0}$ be the sequence of vertices defined as $v_0$ at each time the algorithm is paused.

\begin{figure}[h]
\begin{center}
\includegraphics[scale=1.6]{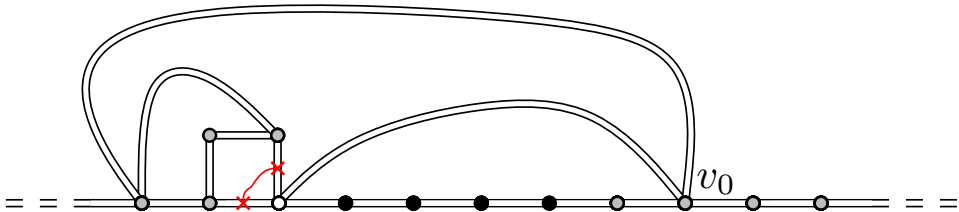}
\end{center}
\caption{The situation when the finite black segment on the boundary is swallowed.}
\label{SituationTh1}
\end{figure}

We are now able to conclude the proof. First, if $p> 5/9$, we know that $\Theta^{\square}_{\rm{site}}(p)>0$ using directly Proposition \ref{pcquad} and a standard monotone coupling argument. On the other hand, if $p\leq 5/9$, we know from Proposition \ref{pcquad} that the above algorithm is paused in finite time almost surely. By induction, we already stated that if there exists a vertex $v_k$ which is white, the percolation event cannot occur. However, the colouring of the vertices $(v_k)_{k\geq 0}$ form an i.i.d. sequence with Bernoulli law of parameter $1-p>0$, which yields that $\Theta^{\square}_{\rm{site}}(p)=0$ and the expected result.\end{proof}

Let us finally detail the universality of our methods with respect to the law of the map. First, the previous arguments can be easily adapted to the case of the UIHPT. In particular, we have that for a free boundary condition

$$p_{c,\rm{site}}^{\triangle}=\frac{1}{2},$$and that there is no percolation at the critical point almost surely. This result has already been proved in \cite{angel_percolations_2015} for a closed boundary condition (except the origin vertex). The interesting fact is that we can derive from our proof a universal formula for the site percolation thresholds, exactly as it was done in \cite{angel_percolations_2015}, Theorem 1, for bond and face percolation. Following this idea, we introduce the following notations:

$$\delta^*:=\mathbb{E}(\mathcal{R^*})=2\mathbb{E}(\mathcal{R}^*_r) \qquad \text{ and } \qquad \eta^*:=\mathbb{P}(\mathcal{R}^*_r>0),$$ where $*$ still denotes either of the symbols $\triangle$ or $\square$. From the results of Section \ref{SectionRPM} and the Remark following Proposition \ref{pcquad} we get that 

$$\eta^{\triangle}=\frac{1}{6}, \quad \eta^{\square}=\frac{2}{9}, \quad \delta^{\triangle}=\frac{2}{3} \quad \text{ and } \quad \delta^{\square}=1.$$ Note that the computation of $\eta^{\triangle}$ is immediate from the symmetry of the configurations and the fact that there are no self-loops. The arguments of the proof of Theorem \ref{TheoremPCQ} yield the following result.

\begin{Th}\label{ThUniversality}For site percolation on the UIHPT and the UIHPQ, we have that

$$p_{c,\rm{site}}^{*}=1-\frac{2\eta^*}{\delta^*}.$$

\end{Th}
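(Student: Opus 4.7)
The plan is to run essentially the same proof as for Theorem \ref{TheoremPCQ}, but keeping the law of the map abstract so that the critical point can be read off as a formula in $\delta^*$ and $\eta^*$. Since the vertex-peeling process (Algorithm \ref{VertexPeelingProcess}) only relies on the spatial Markov property and on the existence of a marked vertex on the boundary, it makes sense verbatim in both the UIHPT and the UIHPQ case. The only point to check is termination: at each step the event $\{\mathcal{R}_r^*>0\}$ has probability $\eta^*>0$ by definition, and the successive trials are i.i.d.\ by the spatial Markov property, so the process ends in finite time almost surely. Conditionally on termination, the total number of edges swallowed on the right of the marked vertex has the law $\mathbb{P}(\mathcal{R}_r^*\in\cdot\mid\mathcal{R}_r^*>0)$.

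Next, I would re-execute Algorithm \ref{PeelingPr} on a UIHP-$*$ with the ``Free-Black-White'' boundary condition of Figure \ref{initialcolouringPCQ}. Proposition \ref{behaviourB} and Lemma \ref{PercoEvent} carry over line by line, giving that the length $B_n$ of the finite black segment is a random walk started at $1$, killed at its first entry into $\mathbb{Z}_-$, with i.i.d.\ increments distributed as $1-\boldsymbol{1}_{c_0=0}\mathcal{H}_0^*$, where $\mathcal{H}_0^*$ has the law of $\mathcal{R}_r^*$ conditioned on $\{\mathcal{R}_r^*>0\}$ and $c_0$ is an independent Bernoulli$(p)$ colour. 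The same coupling between $\{B_n\to+\infty\}$, $\{\exists n:\,B_n=0\}$ and the percolation event applies.

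The key computation is then the expected step. Using $\mathbb{E}(\mathcal{R}_r^*)=\delta^*/2$ (by symmetry $\mathcal{R}_l^*\stackrel{(d)}{=}\mathcal{R}_r^*$) and $\mathbb{P}(\mathcal{R}_r^*>0)=\eta^*$, one has
$$\mathbb{E}\bigl(\mathcal{R}_r^*\,\big|\,\mathcal{R}_r^*>0\bigr)=\frac{\mathbb{E}(\mathcal{R}_r^*)}{\mathbb{P}(\mathcal{R}_r^*>0)}=\frac{\delta^*}{2\eta^*},$$
and hence
$$\mathbb{E}_p\bigl(1-\boldsymbol{1}_{c_0=0}\mathcal{H}_0^*\bigr)=1-(1-p)\frac{\delta^*}{2\eta^*}.$$
This vanishes exactly at $p=1-\tfrac{2\eta^*}{\delta^*}$. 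A standard random walk dichotomy (positive drift $\Rightarrow$ transience to $+\infty$ with positive probability; nonpositive drift $\Rightarrow$ almost sure entry into $\mathbb{Z}_-$, including the null recurrent critical case) then determines, via Lemma \ref{PercoEvent}, that the critical parameter for the ``Free-Black-White'' boundary condition is $p_c^*=1-\tfrac{2\eta^*}{\delta^*}$, and that there is no percolation at $p_c^*$.

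Finally, to extend to a free boundary condition I would repeat the argument of Theorem \ref{TheoremPCQ}: condition on the origin being open, and iterate Algorithm \ref{PeelingPr} on the sequence of ``pivot'' vertices $(v_k)_{k\geq 0}$ appearing each time the finite black segment is swallowed. Since the $v_k$'s are i.i.d.\ Bernoulli$(p)$ and since subcritical excursions terminate almost surely by the computation above, one gets no percolation as soon as $p\le 1-\tfrac{2\eta^*}{\delta^*}$, while the opposite inequality is automatic by monotone coupling. Plugging in the explicit values $\delta^\triangle=2/3$, $\eta^\triangle=1/6$, $\delta^\square=1$, $\eta^\square=2/9$ recovers $p_{c,\rm site}^\triangle=1/2$ and $p_{c,\rm site}^\square=5/9$. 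The only genuinely non-routine point in this plan is checking that the vertex-peeling process really does yield the conditional law $\mathbb{P}(\mathcal{R}_r^*\in\cdot\mid\mathcal{R}_r^*>0)$ in the triangular case (where the swallowed edges all lie on one side of the peeled edge), but this follows directly from the spatial Markov property and the strong Markov property of i.i.d.\ trials.
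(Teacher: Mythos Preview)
Your proposal is correct and follows essentially the same route as the paper, which explicitly states that ``the arguments of the proof of Theorem \ref{TheoremPCQ} yield'' Theorem \ref{ThUniversality}. In particular, your key computation $\mathbb{E}(\mathcal{R}_r^*\mid\mathcal{R}_r^*>0)=\delta^*/(2\eta^*)$ and the subsequent random-walk dichotomy are exactly the general form of the argument in Proposition \ref{pcquad}, and your extension to the free boundary via the pivot vertices $(v_k)_{k\ge0}$ mirrors the paper's proof of Theorem \ref{TheoremPCQ}.
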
 Of course, this formula is believed to hold in greater generality, for any infinite random half-planar map satisfying the spatial Markov property. Note that two parameters are needed to describe the site percolation threshold, while $\delta^*$ is sufficient for bond and face percolation as proved in \cite{angel_percolations_2015}.
  
\section{Scaling limits of crossing probabilities in half-plane random maps}\label{SectionCrossingP}

Throughout this section, we focus on the problem of scaling limits of crossing probabilities, and aim at generalizing the results of \cite{angel_scaling_2004}. Despite the fact that we also use a peeling process, the problem is much harder because the models we consider are less well-behaved than site percolation on the UIHPT.

More precisely, we consider site, bond and face percolation on the UIHPT and the UIHPQ, and work conditionally on the boundary condition represented on Figure \ref{colouring} (for the bond percolation case). In other words, the boundary is ``White-Black-White-Black", with two infinite segments and two finite ones, of lengths $\lfloor\lambda a\rfloor$ and $\lfloor\lambda b\rfloor$ respectively. The crossing event we focus on is the following: ``there exists a black path linking the two black segments of the boundary", or equivalently ``the two black segments are part of the same percolation cluster". We denote by $\mathsf{C}^*_{\rm{bond}}(\lambda a, \lambda b)$ (respectively $\mathsf{C}^*_{\rm{face}}(\lambda a, \lambda b)$ and $\mathsf{C}^*_{\rm{site}}(\lambda a, \lambda b)$) this event where $*$ denote either of the symbols $\triangle$ or $\square$ (see Figure \ref{colouring}).

\begin{figure}[h]
\begin{center}
\includegraphics[scale=1.6]{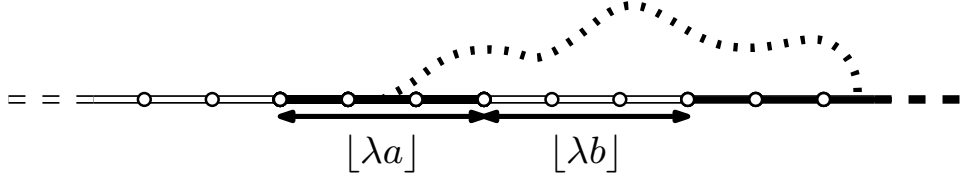}
\end{center}
\caption{The boundary condition and the crossing event $\mathsf{C}^*(\lambda a, \lambda b)$.}
\label{colouring}
\end{figure}

Again, although the dependence on the map and percolation models are rather supported by the probability measure in this setting, we keep using the notation $*$ for the sake of simplicity. The quantity we are interested in is the scaling limit of the crossing probability $\mathbb{P}_p(\mathsf{C}^*(\lambda a, \lambda b))$,

$$\lim_{\lambda \rightarrow +\infty}{\mathbb{P}_p\left(\mathsf{C}^*(\lambda a, \lambda b)\right)},$$ which we expect to be universal following Cardy's universality conjecture if we consider the percolation models at their critical point, i.e. when $p$ is exactly the percolation threshold. We recall those values that can be found in \cite{angel_percolations_2015} and the previous section.

\begin{Thbis}{(Theorem 1 in \cite{angel_percolations_2015}, \cite{angel_growth_2003}, Theorem \ref{TheoremPCQ})} The percolation thresholds on the UIHPT are given by

$$p_{c,\rm{site}}^{\triangle}=\frac{1}{2}, \quad p_{c,\rm{bond}}^{\triangle}=\frac{1}{4} \quad \text{and} \quad p_{c,\rm{face}}^{\triangle}=\frac{4}{5}.$$The percolation thresholds on the UIHPQ are given by

$$p_{c,\rm{site}}^{\square}=\frac{5}{9},\quad p_{c,\rm{bond}}^{\square}=\frac{1}{3} \quad \text{and} \quad p_{c,\rm{face}}^{\square}=\frac{3}{4}.$$

\end{Thbis}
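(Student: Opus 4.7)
The statement collects six threshold values, of which one, $p_{c,\rm{site}}^{\square}=5/9$, is exactly Theorem \ref{TheoremPCQ}, proved in Section \ref{SectionPCQ} via the vertex-peeling process (Algorithm \ref{VertexPeelingProcess}) and the random walk analysis of Proposition \ref{behaviourB} and Proposition \ref{pcquad}. The remaining five equalities are due to \cite{angel_percolations_2015}, and the plan is to recover them by a common peeling-along-an-interface template, which I now sketch.

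First I would set up, in each of the five remaining cases (site-$\triangle$, bond-$\triangle$, face-$\triangle$, bond-$\square$, face-$\square$), a peeling process started from a ``Black-White" boundary condition in which two infinite boundary segments of opposite colour meet at the root. At each step the face incident to the root edge is peeled, with the root always chosen at the current black/white interface on the boundary. The spatial Markov property of Section \ref{SpatialMarkovProperty} guarantees that the successive transitions are i.i.d., and the configuration probabilities of Propositions \ref{tricase} and \ref{quadcase} let one read off, for each peel, how many black boundary edges are created, exposed or swallowed as a function of the colour of the newly discovered element. The length $B_n$ of the (possibly virtual) black boundary segment to the right of the current root then becomes a random walk on $\mathbb{Z}$ killed on its first entrance in $\mathbb{Z}_{-}$, exactly as in Proposition \ref{behaviourB}.

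Next I would compute the mean increment $\mathbb{E}_{p}(B_{n+1}-B_{n})$. In every case it is an affine function of $p$ whose only non-trivial ingredient is $\mathbb{E}(\mathcal{R}^{*})$ weighted by the probability that the peeled element is of the colour that swallows boundary edges. Using $\mathbb{E}(\mathcal{R}^{\triangle})=2/3$ and $\mathbb{E}(\mathcal{R}^{\square})=1$ from Proposition \ref{PropACExp}, the unique $p\in(0,1)$ for which this drift vanishes is $1/2$, $1/4$, $4/5$, $1/3$, $3/4$ in the five respective cases. As in the proof of Proposition \ref{pcquad}, above this value the walk escapes to $+\infty$ with positive probability, forcing $\Theta^{*}(p)>0$ via the obvious analogue of Lemma \ref{PercoEvent}; at or below it, the walk is null- or positive-recurrent and reaches $\mathbb{Z}_{-}$ almost surely, enclosing the cluster of interest in a finite region and giving $\Theta^{*}(p)=0$.

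The main obstacle, and the reason site percolation on the UIHPQ is singled out in Section \ref{SectionPCQ}, is that the straightforward interface peeling sketched above does not cleanly apply in that one model: in the quadrangle configuration of Figure \ref{ExplQ}, the extra boundary vertex of the revealed face may itself belong to the open cluster of the origin, so a single interface does not capture the cluster. This is precisely what the vertex-peeling process of Algorithm \ref{VertexPeelingProcess} is designed to circumvent, and it is the only one of the six cases requiring the refined argument of this paper; the other five go through with the template above and the values follow by direct substitution.
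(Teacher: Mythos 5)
Your proposal is correct and structurally identical to the paper's treatment of this statement: the paper offers no proof beyond attribution --- $p_{c,\rm{site}}^{\square}=5/9$ is Theorem \ref{TheoremPCQ}, and the five remaining values are quoted from \cite{angel_percolations_2015} (together with \cite{angel_growth_2003}) --- which is exactly your reduction, and the drift-vanishing mechanism you describe is indeed the engine of the cited proofs, with the drifts vanishing at the stated values.

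Two caveats on your reconstruction of the cited arguments, where the ``common template'' is stated too strongly. First, for bond percolation the edges exposed by a peeled face are free (their colours are not determined at the moment of peeling), so a free segment appears on the boundary and the black-segment length has i.i.d.\ increments only away from the zeros of the free-segment process; it is \emph{not} a random walk ``exactly as in Proposition \ref{behaviourB}''. This is precisely the complication the paper has to control in Section \ref{SectionBondCase} (the coupling with $(S_n)_{n\geq 0}$ and Lemma \ref{cvgprobaxi}), and it arises already at the level of the threshold computation, not only for scaling limits; only face percolation, where faces are revealed together with their colours, genuinely yields an i.i.d.-increment walk. Second, your claim that the only non-trivial ingredient of the drift is $\mathbb{E}(\mathcal{R}^{*})$ weighted by a colour probability is accurate for bond percolation, and for face percolation after adding $\mathbb{E}(\mathcal{E}^{*})$ from Proposition \ref{PropACExp}, but it is not accurate for site percolation on the UIHPT: there a second, independent parameter is needed (the probability $q^{\triangle}_{-1}$ of exposing a fresh vertex in the interface peeling, or equivalently $\eta^{\triangle}=\mathbb{P}(\mathcal{R}^{\triangle}_r>0)$ in the vertex-peeling route), exactly as the paper emphasizes after Theorem \ref{ThUniversality} when it notes that two parameters are needed for site thresholds while $\delta^*$ suffices for bond and face. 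Neither caveat affects the validity of your proof of the statement itself, which, like the paper's, rests on the citations together with Theorem \ref{TheoremPCQ}.
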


Starting from now, the probability $p$ that an edge (respectively face, vertex) is open is always set at $p_{c,\rm{bond}}^*$ (respectively $p_{c,\rm{face}}^*$, $p_{c,\rm{site}}^*$), in every model we consider. Most of the arguments being valid for both the UIHPT and the UIHPQ, we will treat those cases simultaneously and omit the notation $*$, underlying the differences where required. For technical reasons, we have to treat the cases of bond, face and site percolation separately, even though the proof always uses a similar general strategy. Our aim is now to prove Theorem 2.

\subsection{Crossing probabilities for bond percolation}\label{SectionBondCase}

\subsubsection{Peeling process and scaling limit}\label{PeelingScalingLimit}

We start with the case of bond percolation. In order to compute the crossing probability we are interested in, we again use a peeling process that we now describe. The purpose of this process is to follow a closed path in the dual map, which is the map that has a vertex in each face of the initial (primal) map, and edges between neighboring faces (see \cite{grimmett_percolation_1999}, Section 11.2 for details). The reason is that dual closed and primal open crossings between the segments of the boundary are almost surely complementary events.

The algorithm starts with revealing the face incident to the rightmost white edge of the infinite white segment (see Figure \ref{FirstEdge}), without discovering the colour of its incident edges, which are thus free. (The rightmost white edge is then peeled in the sense that it is no longer in the infinite connected component of the map deprived of the revealed face.)

\begin{figure}[h]
\begin{center}
\includegraphics[scale=1.6]{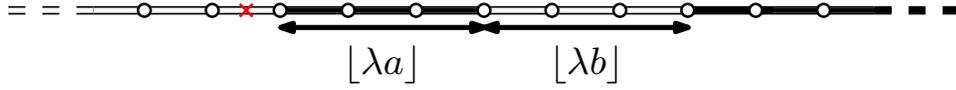}
\end{center}
\caption{The initial boundary and the first edge to peel.}
\label{FirstEdge}
\end{figure}

In general, the algorithm is well defined as soon as the boundary has a ``White-(Free)-Any" colouring, meaning that there is an infinite white segment and (possibly) a finite free segment on the left of the boundary, but no specific condition on the colouring of the right part. Although the right part will turn out to be coloured ``Black-White-Black" in what follows, it will be helpful to have an algorithm which is defined independently of the colouring of the right part of the boundary.

\begin{Alg}\label{AlgoBond}Consider a half-plane map which has the law of the UIHP-$*$ with a ``White-(Free)-Any" boundary condition.

\begin{itemize}

\item Reveal the colour of the rightmost free edge, if any:

\begin{itemize}
\item If it is black, repeat the algorithm.

\item If it is white, reveal the face incident to this edge (without revealing the colour of the other edges of the face).

\end{itemize} 

\item If there is no free edge on the boundary of the map, as in the first step, reveal the face incident to the rightmost white edge of the infinite white segment (without revealing the colour of the other edges of the face).
\end{itemize} After each step, repeat the algorithm on the UIHP-$*$ given by the unique infinite connected component of the current map, deprived of the revealed face. 
\end{Alg}

We will call \textit{revealed part} of the map all the vertices and edges that are not in this infinite connected component (even if all of them may not have been revealed, but rather swallowed in an independent finite connected component). The map we obtain is still implicitly rooted at the next edge we have to peel.

Let us now give some properties of Algorithm \ref{AlgoBond}, which are essentially the same as that of Algorithm \ref{PeelingPr}, apart from the fact that the algorithm never ends. We omit the proof of these properties, that are direct consequences of the spatial Markov property.

\begin{Prop}\label{PeelingProcess}
The peeling process is well defined, in the sense that the pattern of the boundary (White-(Free)-Any) is preserved. Moreover, at each step, the planar map we consider has the law of the UIHP-$*$ and is independent of the revealed part of the map. 
\end{Prop}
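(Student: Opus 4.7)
The plan is to proceed by induction on the peeling step, treating the two types of operations performed by Algorithm \ref{AlgoBond} separately: (i) the revelation of the colour of a single free boundary edge, and (ii) the peeling of a face incident to an explicitly chosen boundary edge. For (i), I would use the fact that under $\mathbb{P}_p$ the percolation configuration $c$ is, conditionally on the map, an i.i.d.\ Bernoulli$(p)$ field, so that unveiling the colour of one edge does not alter the conditional law of the underlying map nor of the remaining unrevealed colours. For (ii), I would invoke the spatial Markov property of Section \ref{SpatialMarkovProperty}, combined with the re-rooting invariance of $\nu^*_{\infty,\infty}$, which allows the peeled edge to be any boundary edge chosen as a function of the already-revealed information.

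Given these two building blocks, the preservation of the ``White-(Free)-Any'' pattern reduces to a case analysis on the three possible operations of Algorithm \ref{AlgoBond}. First, if the rightmost free edge is revealed and turns out to be black, the free segment simply shrinks by one edge on the right and that edge joins the ``Any'' part, so the pattern is preserved. Second, if the rightmost free edge is revealed to be white and its incident face is peeled, the newly exposed edges of this face take the place of the peeled edge on the boundary and are themselves free (their colours have not been revealed), while the edges further to the left and right are unchanged up to possibly being swallowed into a finite Boltzmann component; hence the pattern survives. Third, if there is no free edge at all, the rightmost white edge of the infinite white segment is peeled, and the newly exposed edges are inserted precisely at the interface between the (shortened) infinite white segment and the ``Any'' part, producing a new (possibly empty) finite free segment. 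In each case the description ``White-(Free)-Any'' is maintained.

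For the distributional statement, the key observation is that the edge chosen for peeling at each step is a deterministic function of the currently revealed data, i.e.\ of the colours uncovered so far and of the past peeling configurations. Applying the spatial Markov property at this (re-rooted) edge, the unique infinite connected component of the map deprived of the newly revealed face has law $\nu^*_{\infty,\infty}$ and is independent of both the revealed face and the finite Boltzmann components it creates. Since the unrevealed edge colours in this component are, independently, i.i.d.\ Bernoulli$(p)$, the unrevealed coloured map at the next step has exactly the product law $\mathbb{P}_p$ of a UIHP-$*$ with boundary colouring of the prescribed form, independently of everything already revealed. Iterating yields the claim for all times.

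The main subtlety I expect lies in verifying that the rule selecting the next edge to peel is adapted to the filtration generated by the revealed data alone, so that re-rooting invariance and the spatial Markov property can be applied cleanly at every step; here this is immediate because Algorithm \ref{AlgoBond} specifies the next edge purely in terms of the current boundary colouring, but it is precisely the point at which one must be careful when adapting the argument to richer exploration rules later in the paper.
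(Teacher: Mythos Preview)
Your proposal is correct and follows precisely the approach the paper has in mind: the paper explicitly omits the proof, stating only that ``these properties [...] are direct consequences of the spatial Markov property,'' and your inductive argument combining the Bernoulli i.i.d.\ structure of the colouring with the spatial Markov property and re-rooting invariance is exactly the intended justification. Your case analysis for the preservation of the White-(Free)-Any pattern is accurate, and your emphasis on the measurability of the peeling rule with respect to the revealed data is the right point to make explicit.
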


Starting from now, we denote by $\mathcal{E}_n$, respectively $\mathcal{R}_n,c_n,$ the number of exposed edges, respectively the number of swallowed edges and the color of the revealed edge at step $n$ of the exploration, for every $n\geq 0$. By convention, we set $c_n= 0$ when there is no free edge on the boundary and $\mathcal{E}_n=\mathcal{R}_n=0$ when no edge is peeled $(c_n=1)$. Recall that $\mathcal{R}_{l,n}$ (respectively $\mathcal{R}_{r,n}$) denotes the number swallowed edges at the left (respectively right) of the root edge at step $n$ of the exploration. The quantity we are interested in is the size of the finite black segment at step $n$. In order to study this quantity, we now introduce two random processes.

\vspace{3mm}

First, for every $n\geq 0$, we let $F_n$ be the \textbf{length of the free segment} on the boundary at step $n$ of the peeling process. Then, we have $F_0=0$ and for every $n\geq 0$,

\begin{equation}\label{DefF}
F_{n+1}=\left\lbrace
\begin{array}{ccc}
F_n-1 & \mbox{if} & c_n=1\\
\mathcal{E}_n+(F_n-\mathcal{R}_{l,n}-1)_+ & \mbox{if} & c_n=0\\
\end{array}\right..
\end{equation} The process $(F_n)_{n\geq 0}$ is a Markov chain with respect to the canonical filtration of the exploration process, and we have $c_n= 0$ when $F_n=0$.

\begin{Rk} The Markov chain $(F_n)_{n\geq 0}$ takes value zero at step $n$ only if we have $F_{n-1}=1$ and $c_n=1$. Moreover, if the whole free segment is swallowed by a jump of the peeling process at step $n$, then $F_n=\mathcal{E}_n$ (see Propositions \ref{tricase} and \ref{quadcase}).
\end{Rk}
 
Then, we let $B_0=\lfloor \lambda a \rfloor$ and for every $n\geq 0$,

\begin{equation}\label{DefB}
B_{n+1}=\left\lbrace
\begin{array}{ccc}
B_n+1 & \mbox{if} & c_n=1\\
B_n-\mathcal{R}_{r,n} & \mbox{if} & c_n=0\\
\end{array}\right..
\end{equation} Like $(F_n)_{n\geq 0}$, the process $(B_n)_{n\geq 0}$ is a Markov chain with respect to the canonical filtration of the exploration process. Moreover, if we denote by $T:=\inf\{n \geq 0 : B_n \leq 0 \}$ the first entrance time of $(B_n)_{n\geq 0}$ in $\mathbb{Z}_-$, then for every $n\in \{0,1,\ldots,T-1\}$, $B_n$ is the \textbf{length of the finite black segment} at step $n$ of the exploration process. 

An important point is that for every $n<T$, the edges of the finite black segment are all part of the open cluster of the initial segment, while for $n \geq T$, the initial finite black segment has been swallowed by the peeling process. (However, the algorithm can continue because the infinite white and finite free segments are still well defined.) 

As we will see, and as was stressed in \cite{angel_scaling_2004}, the so-called \textit{overshoot} $B_{T}$ of the process $(B_n)_{n\geq 0}$ at the first entrance in $\mathbb{Z}_-$ is the central quantity, which governs the behaviour of crossing events.

We now need to describe the law of the random variables involved in the previous definitions of the processes $(F_n)_{n\geq 0}$ and $(B_n)_{n\geq 0}$. This can be done using the very definition of the peeling algorithm and Section \ref{SpatialMarkovProperty}. Note that one has to be careful with the fact that $\mathcal{E}_n=\mathcal{R}_n=0$ when $c_n=1$. Thus, we let $(t_k)_{k\geq 1}$ be the sequence of consecutive stopping times such that $c_{t_k}=0$ (i.e. the times at which a face is peeled), and $(s_k)_{k\geq 1}$ the sequence of consecutive stopping times such that $F_{s_k}>0$ (i.e. the times at which there is an edge whose colour is revealed). In what follows, we let $(\mathcal{E},\mathcal{R}_l,\mathcal{R}_r)$ be defined as in Section \ref{SectionRPM} (notation $*$ omitted). A simple application of the spatial Markov property yields the following result.

\begin{Lem}\label{LawER}The random variables $(\mathcal{E}_{t_k},\mathcal{R}_{l,t_k},\mathcal{R}_{r,t_k})_{k\geq 1}$ are i.i.d. and have the same law as $(\mathcal{E},\mathcal{R}_l,\mathcal{R}_r)$. Moreover, the random variables $(c_{s_k})_{k\geq 1}$ are i.i.d. and have the Bernoulli law of parameter $p^*_{c,\text{bond}}$.
\end{Lem}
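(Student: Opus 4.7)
The plan is to apply the spatial Markov property (Proposition \ref{PeelingProcess}) iteratively along the two families of stopping times. Let $(\mathcal{F}_n)_{n\geq 0}$ denote the natural filtration of the exploration, containing all the information gathered up to step $n$: the revealed part of the map (and its faces), together with the colours of the edges whose status has been inspected. Both $(t_k)$ and $(s_k)$ are genuine stopping times for $(\mathcal{F}_n)$, since the event $\{t_k=n\}$ (resp.\ $\{s_k=n\}$) can be read off from the current boundary condition at step $n$. The statements will therefore follow from the strong Markov property, applied at $t_k$ and $s_k$ respectively.

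For the first assertion, I would proceed by induction on $k$. At time $t_k$, by definition of Algorithm \ref{AlgoBond}, a face is peeled: either because the boundary has no free edge (initial step of the cycle), or because the rightmost free edge was just revealed to be white. In both cases, conditionally on $\mathcal{F}_{t_k-1}$, Proposition \ref{PeelingProcess} guarantees that the infinite component of the unrevealed part of the map has law $\nu_{\infty,\infty}^*$ and is independent of the past; re-rooting invariance then lets us treat the edge to be peeled at $t_k$ as the root of a fresh UIHP-$*$. Propositions \ref{tricase} and \ref{quadcase} describe the distribution of the face incident to the root and directly identify the law of $(\mathcal{E}_{t_k},\mathcal{R}_{l,t_k},\mathcal{R}_{r,t_k})$ with that of $(\mathcal{E},\mathcal{R}_l,\mathcal{R}_r)$, independently of $\mathcal{F}_{t_k-1}$. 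Iterating in $k$ gives the i.i.d.\ conclusion.

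For the second assertion, the argument is essentially the same but shorter. At time $s_k$ the rightmost free edge has by definition never been inspected, so its colour is still an unexamined Bernoulli$(p_{c,\text{bond}}^*)$ variable, independent of $\mathcal{F}_{s_k-1}$: this is the content of the product form $\mathbb{P}_p(\mathrm{d}\boldsymbol{\mathrm{m}}\mathrm{d}c)=\nu_{\infty,\infty}^{*}(\mathrm{d}\boldsymbol{\mathrm{m}})\mathcal{P}_p^{e(\boldsymbol{\mathrm{m}})}(\mathrm{d}c)$ combined with the independence of coordinates under $\mathcal{P}_p^{e(\boldsymbol{\mathrm{m}})}$. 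Hence $c_{s_k}$ is Bernoulli$(p_{c,\text{bond}}^*)$ independent of the past, and a final induction concludes. There is no real obstacle here; the only point requiring care is the formalization of the exploration filtration so that $t_k$ and $s_k$ are stopping times and the spatial Markov property can legitimately be invoked at these random times, which is the routine strong-Markov upgrade of Proposition \ref{PeelingProcess}.
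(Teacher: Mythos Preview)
Your proposal is correct and matches the paper's approach exactly: the paper states only that the lemma is ``a simple application of the spatial Markov property'' without further detail, and what you have written is precisely the routine unpacking of that sentence via the strong Markov property at the stopping times $t_k$ and $s_k$. Your added care in formalizing the filtration and noting the product structure of $\mathbb{P}_p$ for the colour claim is appropriate and does not deviate from the intended argument.
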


In the following, we denote by $c$ a random variable which has the Bernoulli law of parameter $p^*_{c,\text{bond}}$, independent of $(\mathcal{E},\mathcal{R}_l,\mathcal{R}_r)$. We are now able to describe the behaviour of the processes $(F_n)_{n\geq 0}$ and $(B_n)_{n\geq 0}$.

\vspace{3mm}

\textbf{The process $(F_n)_{n\geq 0}$.} The Markov chain $(F_n)_{n\geq 0}$ is not exactly a random walk, but has the same behaviour when it is far away from zero. More precisely, note that $F_1>0$ almost surely and let $\hat{\sigma}:=\inf\{n \geq 1 : F_n-1-\mathbf{1}_{\{c_n=0\}}\mathcal{R}_{l,n}< 0 \}$ be the first time at which the free segment is swallowed (possibly by a revealed face). Note that $F_{\hat{\sigma}}$ is not zero in general, but if $F_n=0$ for $n\geq 1$, then $\hat{\sigma}\leq n$. By construction, as long as $n\leq\hat{\sigma}$, we have

$$F_n=F_1+\sum_{k=1}^{n-1}{\left(\mathbf{1}_{\{c_k=0\}}(\mathcal{E}_k-\mathcal{R}_{l,k})-1\right)}.$$ 

Thus, $(F_n-F_1)_{1\leq n \leq \hat{\sigma}}$ is a random walk killed at the random time $\hat{\sigma}$, whose steps are distributed as $\hat{X}$ defined by

$$\hat{X}:=\mathbf{1}_{\{c=0\}}(\mathcal{E}-\mathcal{R}_l)-1,$$ where we use the definition of $\hat{\sigma}$ and Lemma \ref{LawER}. Moreover, we obtain from Proposition \ref{PropACExp} that

\begin{align*}
\mathbb{E}(\hat{X})&=-p^*_{c,\text{bond}}+(1-p^*_{c,\text{bond}})(\mathbb{E}(\mathcal{E}-\mathcal{R}_l)-1)=
\left\lbrace
\begin{array}{ccc}
\frac{1}{3}-\frac{4}{3}p^*_{c,\text{bond}} \ (*=\triangle) \\
\\
\frac{1}{2}-\frac{3}{2}p^*_{c,\text{bond}} \ (*=\square) \\
\end{array}\right..
\end{align*}
Using the percolation thresholds given at the beginning of Section \ref{SectionCrossingP}, we get that $\mathbb{E}(\hat{X})=0$. As a consequence, since $\hat{\sigma}<\sigma:=\inf\{k\geq 1 : F_k = 0\}$, we get that $\hat{\sigma}$ is almost surely finite. It is finally easy to check that the random variable $\hat{X}$ satisfies the conditions of Proposition \ref{stablerw}. In particular, if we denote by $(\hat{S}_n)_{n \geq 0}$ a random walk with steps distributed as $\hat{X}$ we get

\begin{equation}\label{CvgSkoShat}
\left(\frac{\hat{S}_{\lfloor \lambda t\rfloor}}{\lambda^{2/3}}\right)_{t \geq 0} \underset{\lambda \rightarrow +\infty}{\overset{(d)}{\longrightarrow}} \kappa(\mathcal{S}_t)_{t \geq 0},
\end{equation} in the sense of convergence in law for Skorokhod's topology, where $\mathcal{S}$ is the Lévy $3/2$-stable process of Section \ref{SectionLevyStable}, and $\kappa>0$.

\vspace{3mm}

\textbf{The process $(B_n)_{n\geq 0}$.} Exactly as before, $(B_n)_{n\geq 0}$ is not a random walk, but behaves the same way when $F_n$ is not equal to zero. Let us be more precise here. First, let $\sigma_0=0$, and inductively for every $k\geq 1$,

\begin{equation}\label{defsigma}
  \sigma_k:=\inf\{n\geq \sigma_{k-1}+1 : F_n=0 \}.
\end{equation} From the above description of the process $(F_n)_{n\geq 0}$, we know that started from any initial size, the free segment is swallowed in time $\hat{\sigma}$, which is almost surely finite. When this happens, either a black edge has been revealed and $F_{\hat{\sigma}}=0$, or a revealed face has swallowed the free segment. In this case, with probability bounded away from zero, $F_{\hat{\sigma}+1}=0$. Applying the strong Markov property, we get that all the stopping times $(\sigma_k)_{k\geq 0}$ are almost surely finite.

Since $F_1>0$ by construction, as long as $1\leq n\leq \sigma=\sigma_1$ we have

$$B_n=B_1+\sum_{k=1}^{n-1}{\left(\mathbf{1}_{\{c_k=1\}}-\mathbf{1}_{\{c_k=0\}}\mathcal{R}_{r,k}\right)}.$$ 

Thus, $(B_n-B_1)_{1\leq n \leq \sigma}$ is a random walk killed at the random time $\sigma$, whose steps are distributed as $X$ defined by  

$$X:=\mathbf{1}_{\{c=1\}}-\mathbf{1}_{\{c=0\}}\mathcal{R}_{r}.$$ From the strong Markov property, this also holds for the processes $(B_{\sigma_k+n}-B_{\sigma_k+1})_{1\leq n \leq \sigma_{k+1}-\sigma_k}$, noticing that $F_{\sigma_k+1}>0$ for every $k\geq 0$ by definition. As a consequence of Proposition \ref{PropACExp}, we get that

\begin{align*}
\mathbb{E}(X)&=p^*_{c,\text{bond}}-(1-p^*_{c,\text{bond}})\mathbb{E}(\mathcal{R}_r)
=\left\lbrace
\begin{array}{ccc}
\frac{4}{3}p^{\triangle}_{c,\text{bond}}-\frac{1}{3}  \ (*=\triangle) \\
\\
\frac{3}{2}p^{\square}_{c,\text{bond}}-\frac{1}{2}  \  (*=\square) \\
\end{array}\right..
\end{align*}
Since we work at the critical point, $\mathbb{E}(X)=0$ for both the UIHPT and the UIHPQ, and properties of the random variable $X$ also yield that if $(S_n)_{n \geq 0}$ is a random walk with steps distributed as $X$,

\begin{equation}\label{CvgSkoS}
\left(\frac{S_{\lfloor \lambda t\rfloor}}{\lambda^{2/3}}\right)_{t \geq 0} \underset{\lambda \rightarrow +\infty}{\overset{(d)}{\longrightarrow}} \kappa(\mathcal{S}_t)_{t \geq 0},
\end{equation} in the sense of convergence in law for Skorokhod's topology, where $\mathcal{S}$ is the Lévy $3/2$-stable process of Section \ref{SectionLevyStable} and $\kappa>0$.

\vspace{5mm}

We are now interested in the scaling limit of the process $(B_n)_{n\geq 0}$. We want to prove the following result:

\begin{Prop}\label{cvgscallim} We have, in the sense of convergence in law for Skorokhod's topology: 

$$\left(\frac{B_{\lfloor \lambda^{3/2} t\rfloor}}{\lambda}\right)_{t \geq 0} \underset{\lambda \rightarrow +\infty}{\overset{(d)}{\longrightarrow}}\kappa(\mathcal{S}_t)_{t \geq 0},$$ where $\mathcal{S}$ is the Lévy spectrally negative $3/2$-stable process started at $a$, and $\kappa>0$.
\end{Prop}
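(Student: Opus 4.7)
The strategy is to compare $(B_n)_{n\geq 0}$ with an auxiliary random walk $(S_n)_{n\geq 0}$ starting at $\lfloor a\lambda\rfloor$ whose i.i.d.\ steps have the law of $X$. Applying the scaling convergence \eqref{CvgSkoS} with $\lambda^{3/2}$ in place of $\lambda$ yields
\[
\left(\frac{\lfloor a\lambda\rfloor + S_{\lfloor\lambda^{3/2}t\rfloor}}{\lambda}\right)_{t\geq 0} \underset{\lambda\to +\infty}{\overset{(d)}{\longrightarrow}} (a + \kappa\mathcal{S}_t)_{t\geq 0}
\]
for Skorokhod's topology, which is the announced limit. Hence it suffices to prove that for every $T>0$, the quantity $\lambda^{-1}\sup_{0\leq n\leq \lambda^{3/2}T}|B_n - \lfloor a\lambda\rfloor - S_n|$ converges to $0$ in probability as $\lambda \to +\infty$.

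To construct the coupling, I observe that by Lemma \ref{LawER} the increment of $(B_n)$ at time $n$ has the law of $X$ whenever $F_n>0$, and the law of $-\mathcal{R}_r$ (that is, the conditional law of $X$ given $c=0$) whenever $F_n=0$. I realize $B$ and $S$ on a common probability space so that they share the $n$-th increment at every step with $F_n>0$, while $S$ uses fresh independent randomness at the reset times $(\sigma_k)_{k\geq 1}$ defined in \eqref{defsigma}; this produces a bona fide random walk with step $X$ for $S$. The discrepancy then reads
\[
B_n - \lfloor a\lambda\rfloor - S_n = \sum_{k\,:\,\sigma_k\leq n}\bigl(-\mathcal{R}_{r,\sigma_k} - X'_{\sigma_k}\bigr),
\]
where $(X'_{\sigma_k})$ are the independent copies of $X$ used for $S$ at the reset times.

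Controlling this sum reduces to two ingredients: (i) the number $N_\lambda := \#\{k\,:\,\sigma_k\leq\lambda^{3/2}T\}$ of reset times is $o(\lambda)$ with high probability; (ii) the sum of $N_\lambda$ i.i.d.\ copies of the integrable variable $\mathcal{R}_r + |X'|$ is $o(\lambda)$. Ingredient (ii) is immediate from Proposition \ref{PropACExp}, which ensures that $\mathcal{R}_r$ and $X$ are integrable, together with a crude fluctuation bound exploiting the $3/2$-stable tail of $\mathcal{R}_r$.

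The main obstacle is (i), a local-time estimate at $0$ for the Markov chain $(F_n)$. My approach is to note that the excursions of $F$ above $0$ are distributed as excursions of the random walk $\hat{S}$ (with step $\hat{X}$ in the centered $3/2$-stable domain of attraction) started at the i.i.d.\ bounded heights $\mathcal{E}_{\sigma_k}$ and killed upon hitting $0$. A local limit theorem for $\hat{S}$ gives $\mathbb{P}(\hat{S}_n = 0) = O(n^{-2/3})$, and a comparison between $F$ and $\hat{S}$ then yields $\mathbb{E}[N_\lambda] = O\bigl(\sum_{n=1}^{\lambda^{3/2}T}n^{-2/3}\bigr) = O(\lambda^{1/2})$. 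Markov's inequality gives (i), and combining the two ingredients with the convergence of $S$ yields the proposition.
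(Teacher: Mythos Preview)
Your overall strategy---couple $(B_n)$ to a genuine random walk $(S_n)$ with step law $X$, and show that the discrepancy, which only accrues at the times $\{F_n=0\}$, is negligible after rescaling---is exactly the paper's approach. The coupling you describe differs slightly from the one used there (the paper reuses the same $\mathcal{R}_{r,n}$ at reset times and only resamples the Bernoulli colour, obtaining the one-sided sandwich $S_n-R_n\le B_n\le S_n$), but your version would work just as well provided the local-time estimate is sound.

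The genuine gap is in your justification of ingredient~(i). The sentence ``a local limit theorem for $\hat S$ gives $\mathbb{P}(\hat S_n=0)=O(n^{-2/3})$, and a comparison between $F$ and $\hat S$ then yields $\mathbb{E}[N_\lambda]=O(\sum n^{-2/3})$'' is not a valid argument as it stands. There is no pointwise inequality of the form $\mathbb{P}(F_n=0)\le C\,\mathbb{P}(\hat S_n=0)$: the process $F$ is a regenerative chain at $0$, not the walk $\hat S$, and its visits to $0$ are governed by the \emph{tail of the excursion length}, not by the return probability of $\hat S$ to the origin. What one can legitimately extract from the comparison is only that each $F$-excursion stochastically dominates the first-passage time $\bar\sigma:=\inf\{n\ge 0:\hat S_n+1\le 0\}$; turning this into a bound on $N_\lambda$ requires the tail estimate $\mathbb{P}(\bar\sigma>n)\sim n^{-1/3}L_+(n)$. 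That estimate comes from Rogozin's theorem on ladder epochs (via the positivity parameter $\rho=2/3$ of the limiting stable process), not from the local limit theorem. With it in hand, the paper's bound $\mathbb{P}(\xi_n>k)\le(\mathbb{P}(\bar\sigma\le n))^k$ gives $\xi_n/n^{1/3+\alpha}\to 0$ in probability directly, and then your ingredient~(ii) goes through via the law of large numbers, exactly as in the paper. As written, your step~(i) is a heuristic that lands on the correct exponent, but the mechanism you invoke (the local limit theorem) is not the one doing the work.
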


The idea is to couple a random walk $(S_n)_{n\geq 0}$ with steps distributed as the random variable $X$ and the process $(B_n)_{n\geq 0}$ in such a way that $S_n$ and $B_n$ are close. To do so, let us set $S_0=B_0$, and introduce an independent sequence $(\beta_n)_{n \geq 0}$ of i.i.d. random variables with Bernoulli law of parameter $p^*_{c,\text{bond}}$. We then define $(S_n)_{n\geq 0}$ recursively as follows. For every $n\geq 0$:

\begin{align*}
S_{n+1}
=S_n+\left\lbrace
\begin{array}{ccc}
B_{n+1}-B_n=\mathbf{1}_{\{c_n=1\}}-\mathbf{1}_{\{c_n=0\}}\mathcal{R}_{r,n} &\mbox{ if }  F_n>0 \\
\\
\beta_n+(1-\beta_n)(B_{n+1}-B_n)=\mathbf{1}_{\{\beta_n=1\}}-\mathbf{1}_{\{\beta_n=0\}}\mathcal{R}_{r,n}  &\mbox{ if }  F_n=0 \\
\end{array}\right..
\end{align*} Otherwise said, as long as $F_n>0$, $(S_n)_{n\geq 0}$ performs the same steps as $(B_n)_{n\geq 0}$ and when $F_n=0$, $S_{n+1}$ takes value $S_n+1$ if $\beta_n=1$, and does the same transition as $(B_n)_{n\geq 0}$ otherwise. Using the previous discussion and the fact that $c_n=0$ when $F_n=0$, we get that $(S_n)_{n\geq 0}$ is a random walk started from $B_0$ with steps distributed as the random variable $X$, as wanted.

\vspace{3mm}

Let us also define the set $\Xi_n:=\{0\leq k < n : F_n=0\}$ of zeros of $(F_n)_{n\geq 0}$ before time $n\geq 0$, and set $\xi_n:=\#\Xi_n$ for every $n\geq 0$. The steps of $(S_n)_{n\geq 0}$ and $(B_n)_{n\geq 0}$ are the same except when $F_n=0$, in which case, since $S_{n+1}-S_n$ for every $n\geq 0$, we have $S_{n+1}-S_n \leq  1-(B_{n+1}-B_n)$. Hence, we get that almost surely for every $n\geq 0$,

\begin{equation}\label{Lembounds}
S_n -R_n \leq B_n \leq S_n,
\end{equation} where for every $n\geq 0$,

$$R_n:=\sum_{k\in \Xi_n}{(1-(B_{k+1}-B_k))}.$$ Then, if $(r_n)_n$ is a sequence of i.i.d. random variables with the same law as $\mathcal{R}_{r}$, we have for every $n\geq 0$ that
$$R_n \overset{(d)}{=} \sum_{k=1}^{\xi_n}{(1+r_k)}.$$ 

In order to prove that $(B_n)_{n\geq 0}$ has the same scaling limit as $(S_n)_{n\geq 0}$, the idea is to establish that the process $(R_n)_{n\geq 0}$ is small compared to $(S_n)_{n\geq 0}$. 

Recall from (\ref{defsigma}) the definition of the stopping times $(\sigma_k)_{k\geq 0}$. In other words, $\sigma_k$ is the time of the $k^{\text{th}}$ return of $(F_n)_{n\geq 0}$ to zero, and $\sigma_{k-1}+1$ is the first time after $\sigma_{k-1}$ when $(F_n)_{n\geq 0}$ reaches $\mathbb{Z}_+$ (by construction). Notice that the total number of returns of the process to $0$ before time $n$ equals $\xi_n=\#\{k\geq 0 : \sigma_k < n\}$. The following lemma gives a bound on the asymptotic behaviour of this quantity.

\begin{Lem}\label{cvgprobaxi}For every $\alpha>0$, we have the convergence in probability 

$$\frac{\xi_n}{n^{1/3+\alpha}} \overset{\mathbb{P}}{\underset{n \rightarrow + \infty}{\longrightarrow}} 0.$$
\end{Lem}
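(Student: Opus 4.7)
The plan is to exploit the excursion decomposition of $(F_n)_{n\geq 0}$ at level $0$. By the strong Markov property of the peeling process at each $\sigma_k$ (in the canonical filtration), the successive excursion lengths $\tau_k := \sigma_k - \sigma_{k-1}$, $k \geq 1$, are i.i.d.\ with the law of $\sigma_1$, so that $\xi_n$ is the renewal counting function of $(\tau_k)_{k\geq 1}$. A routine inclusion combined with independence yields the renewal-type bound
$$\mathbb{P}(\xi_n \geq k) = \mathbb{P}(\sigma_k \leq n) \leq \mathbb{P}\bigl(\max_{1\leq i\leq k}\tau_i \leq n\bigr) = (1-\mathbb{P}(\tau_1 > n))^k \leq \exp\bigl(-k\,\mathbb{P}(\tau_1 > n)\bigr),$$
so it suffices to establish a lower bound of the form $\mathbb{P}(\tau_1 > t) \geq c\,t^{-1/3}$ for all $t$ large to conclude.

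To obtain this lower bound I would couple the excursion with a genuine random walk. Since $F_0 = 0$ forces $c_0 = 0$ and $F_1 = \mathcal{E}_0 \geq 1$, define $\hat{S}'_k := F_1 + \sum_{i=1}^k \hat{X}_i$ using the very same increments $\hat{X}_i = \mathbf{1}_{\{c_i=0\}}(\mathcal{E}_i - \mathcal{R}_{l,i}) - 1$ as those driving $(F_n)_{n\geq 0}$ from time $1$ onwards, and let $T' := \inf\{k \geq 1 : \hat{S}'_k \leq 0\}$. A direct induction on $k$ using the recursion (\ref{DefF}) shows $F_{1+k} \geq \hat{S}'_k$ for every $0 \leq k < T'$: in the $c_{k+1}=1$ case both sides decrease by $1$, while in the $c_{k+1}=0$ case the positive-part operator satisfies $(F_{1+k}-\mathcal{R}_{l,k+1}-1)_+ \geq F_{1+k}-\mathcal{R}_{l,k+1}-1$, which combined with the inductive hypothesis produces the same conclusion. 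Since $\hat{S}'_k$ is a positive integer on $\{1 \leq k < T'\}$, this forces $F_{1+k} \geq 1$ on the same range, whence $\tau_1 > T'$.

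The next step is an application of Sparre--Andersen fluctuation theory to the centred random walk $\hat{S}_k = \sum_{i=1}^k \hat{X}_i$. The one-dimensional marginal convergence from (\ref{CvgSkoShat}) gives $\mathbb{P}(\hat{S}_n > 0) \to \mathbb{P}(\mathcal{S}_1 > 0) = \rho = 2/3$, so the Sparre--Andersen identity combined with a standard Tauberian theorem (as in Chapter 8 of \cite{bingham_regular_1989}, already invoked in Proposition \ref{stablerw}) yields
$$\mathbb{P}\bigl(\hat{S}_k > 0,\ 1 \leq k \leq n\bigr) \sim c_0\, n^{-(1-\rho)} = c_0\, n^{-1/3}.$$
Since $F_1 \geq 1$, the identity $\hat{S}'_k = F_1 + \hat{S}_k$ gives $\mathbb{P}(T' > t) \geq \mathbb{P}(\hat{S}_k \geq 0,\ 1 \leq k \leq t) \geq c\,t^{-1/3}$ for all large $t$, and the required tail bound on $\tau_1$ follows.

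Plugging this estimate into the renewal-type bound with $k = \lceil n^{1/3+\alpha} \rceil$ produces $\mathbb{P}(\xi_n \geq n^{1/3+\alpha}) \leq \exp(-c\,n^{\alpha}) \to 0$, which is exactly the announced convergence in probability. The main obstacle is the tail lower bound on $\tau_1$: its proof requires both the coupling argument, leaning on the fact that the reflection of $F$ at $0$ only ever pushes it upward (so that $F$ cannot die before the underlying walk does), and the transfer of the positivity parameter $\rho = 2/3$ from the limiting spectrally negative $3/2$-stable process $\mathcal{S}$ to the discrete walk $\hat{S}$ in order to apply Sparre--Andersen's theorem with the correct exponent.
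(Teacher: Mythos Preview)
Your proof is correct and follows essentially the same route as the paper: both exploit the i.i.d.\ excursion structure of $(F_n)_{n\geq 0}$ at level $0$, bound $\mathbb{P}(\xi_n > k)$ by $(1-\mathbb{P}(\tau_1>n))^k$, couple $F$ from below with the random walk driven by $\hat{X}$ (the paper phrases this as ``not taking the positive part in (\ref{DefF})''), and then invoke fluctuation theory---you via Sparre--Andersen plus a Tauberian argument, the paper via Rogozin's theorem, which packages exactly the same ingredients. The only imprecision is your claim $\mathbb{P}(\hat{S}_k>0,\ 1\leq k\leq n)\sim c_0\,n^{-1/3}$: the general statement only gives $n^{-1/3}L_+(n)$ for some slowly varying $L_+$ (as the paper writes), but this is harmless since $n^{\alpha}L_+(n)\to\infty$ and your final estimate $\exp(-c\,n^{\alpha}L_+(n))\to 0$ still goes through.
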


\begin{proof}The strong Markov property applied to the stopping times $(\sigma_k)_{k\geq 0}$ yields that for every $k\geq 0$ and $n\geq 0$,

\begin{equation}\label{EqnXi}
  \mathbb{P}(\xi_n>k)\leq \prod_{j=1}^k{\mathbb{P}(\sigma_j-\sigma_{j-1}\leq n)}.
\end{equation} Now, we may simplify the computation with the following remark: for every $k\geq 1$, $\sigma_{k}-\sigma_{k-1}$ stochastically dominates the time needed by the random walk $(\hat{S}_n)_{n\geq 0}$ started from $0$ with steps distributed as $\hat{X}$ to reach $\mathbb{Z}_-\setminus\{0\}$. Indeed, due to the fact that $F_{\sigma_{k-1}+1}\geq 1$ almost surely we have (for instance by a coupling argument) that for every $k\geq 1$ and $n\geq 0$,

$$\mathbb{P}(\sigma_{k}-\sigma_{k-1}\leq n)\leq \mathbb{P}(\sigma'\leq n),$$ where $\sigma'$ is defined exactly as $\sigma=\sigma_1$, but for the Markov chain $(F_n)_{n\geq 0}$ with initial distribution $\delta_1$. 

Then, observe that when started at length $1$, the process $(F_n)_{n\geq 0}$ is bounded from below by a random walk with steps distributed as $\hat{X}$, also started at $1$ and up to the first entrance of this random walk into $\mathbb{Z}_-$ (using again a coupling argument which consists in not taking the positive part in (\ref{DefF})). In other words, for every $k\geq 1$ and $n\geq 0$,
$$\mathbb{P}(\sigma_{k}-\sigma_{k-1}\leq n)\leq \mathbb{P}(\bar{\sigma}\leq n),$$ where $\bar{\sigma}:=\inf\{n \geq 0 : \hat{S}_n+1 \leq 0\}$. Thus, we get from (\ref{EqnXi}) that $\mathbb{P}(\xi_n>k)\leq (\mathbb{P}(\bar{\sigma}\leq n))^k$, and now want to apply a result due to Rogozin that we recall for the sake of completeness.

\begin{Thbis}{(Theorem 0 in \cite{doney_exact_1982}, \cite{rogozin_distribution_1971})} Let $(X_k)_{k\geq 0}$ be a sequence of i.i.d. random variables, $S_0:=0$ and for every $n\geq 1$, $S_n:=\sum_{k=1}^n{X_k}$. Let also $\sigma_+:=\inf\left\lbrace n\geq 1 \mid S_n>0\right\rbrace$ be the first strict ascending ladder epoch of the random walk $(S_n)_{n\geq 0}$, and assume that 

\begin{equation}\label{AssumptionRogozin}
	\sum_{k=1}^{+\infty}k^{-1}P(S_k>0)=\sum_{k=1}^{+\infty}k^{-1}P(S_k\leq 0)=+\infty.
\end{equation} Then, the following assertions are equivalent:
\begin{enumerate}
	\item $n^{-1}\sum_{k=1}^n{\mathbb{P}(S_k>0)} \underset{n \rightarrow +\infty}{\longrightarrow} \gamma \in [0,1]$.
	\item There exists a slowly varying function $L_+$ such that $P(\sigma_+\geq n) \underset{n \rightarrow +\infty}{\sim} n^{-\gamma}L_+(n).$
\end{enumerate}

\end{Thbis}

Let us check the assumptions of this result. Using the convergence (\ref{CvgSkoShat}), we get

$$\mathbb{P}(\hat{S}_n>0)=\mathbb{P}\left(\frac{\hat{S}_n}{n^{2/3}}>0\right) \underset{n \rightarrow +\infty}{\longrightarrow} P_0(\kappa \mathcal{S}_1 >0)= \rho=\frac{2}{3},$$ where $\mathcal{S}$ is the spectrally negative $3/2$-stable process. It is thus clear that the identity (\ref{AssumptionRogozin}) is satisfied by the process $(-\hat{S}_n)_{n\geq 0}$. Moreover, Césaro's Lemma ensures that

$$\frac{1}{n}\sum_{k=1}^n{\mathbb{P}(-\hat{S}_k>0)} \underset{n \rightarrow +\infty}{\longrightarrow} \gamma=\frac{1}{3}.$$ Thus, the theorem applies and yields
\begin{equation}\label{equivalent}
  \mathbb{P}(\bar{\sigma}\geq n) \underset{n \rightarrow +\infty}{\sim} n^{-1/3}L_+(n),
\end{equation}
 where $L_+$ is a slowly varying function at infinity. In particular, for every $\alpha>0$ and $\varepsilon>0$, 

$$\mathbb{P}(\xi_n>n^{1/3+\alpha}\varepsilon)\leq (\mathbb{P}(\bar{\sigma}\leq n))^{n^{1/3+\alpha}\varepsilon}=(1-\mathbb{P}(\bar{\sigma}>n))^{n^{1/3+\alpha}\varepsilon} \underset{n \rightarrow +\infty}{\longrightarrow} 0,$$ as wanted.\end{proof}

Let us come back to the scaling limit of the process $(B_n)_{n\geq 0}$. We have for every $\alpha>0$ and $n$ large enough

\begin{align}\label{decomposition}
\frac{R_n}{n^{1/3+\alpha}}&\overset{(d)}{=} \frac{1}{n^{1/3+\alpha}}\sum_{k=1}^{\xi_n}{(1+r_k)}=\frac{\xi_n}{n^{1/3+\alpha}}\frac{1}{\xi_n}\sum_{k=1}^{\xi_n}{(1+r_k)}.\end{align} We observed that all the $(\sigma_k)_{k\geq 0}$ are almost surely finite, and thus the quantity $\xi_n$ goes to infinity almost surely with $n$. Using the previous lemma and the law of large numbers, we get that

$$\frac{R_n}{n^{2/3}} \overset{\mathbb{P}}{\underset{n \rightarrow + \infty}{\longrightarrow}} 0,$$ and then
$$\left(\frac{R_{\lfloor\lambda^{3/2}t\rfloor}}{\lambda}\right)_{t\geq 0} \overset{\mathbb{P}}{\underset{\lambda \rightarrow + \infty}{\longrightarrow}} 0$$ for the topology of uniform convergence on compact sets, and thus for Skorokhod's topology. Recalling the inequality (\ref{Lembounds}) and the convergence (\ref{CvgSkoS}), we immediately have

$$\left(\frac{B_{\lfloor \lambda t\rfloor}}{\lambda^{2/3}}\right)_{t \geq 0} \underset{\lambda \rightarrow +\infty}{\overset{(d)}{\longrightarrow}} \kappa(\mathcal{S}_t)_{t \geq 0},$$ and thus the proof of Proposition \ref{cvgscallim} with an obvious substitution, using that $\lambda^{-1}B_0 \underset{\lambda \rightarrow +\infty}{\longrightarrow} a$ to identify the starting point of the stable process.

\subsubsection{Stopped peeling process}\label{SectionStoppedPeeling}

We now focus on the possible situations when the process $(B_n)_{n \geq 0}$ reaches $\mathbb{Z}_-$, which happens almost surely since $(B_n)_{n\geq 0}$ is bounded from above by the recurrent random walk $(S_n)_{n\geq 0}$ we previously introduced. Throughout this section, we denote by $T$ the first time when $(B_n)_{n\geq 0}$ reaches nonpositive values: $$T:=\inf\{n \geq 0 : B_n \leq 0 \}.$$ We now split our study into two cases, depending on the value $\vert  B_{T} \vert$ of the so-called \textit{overshoot} of the process $(B_n)_{n\geq 0}$ at its first entrance into $\mathbb{Z}_-$. We implicitly exclude the events $\{\vert  B_{T} \vert=0\}$ and $\{\vert  B_{T} \vert=\lfloor \lambda b \rfloor\}$, whose probability will be proved to vanish when $\lambda$ goes to infinity in Section \ref{SectionProofThmCPBondCase}.

\vspace{5mm}

\textbf{Case 1: $\vert  B_{T} \vert<\lfloor \lambda b \rfloor$.} In this situation, only a fraction of the finite white segment on the boundary is swallowed, see Figure \ref{Case1bond}. Notice that the swallowed finite black segment has size $B_{T-1}$, since it is the black segment just before the last face is revealed, i.e. at time $T-1$. Unless explicitly mentioned, we now work conditionally on the event $\{\vert  B_{T} \vert<\lfloor \lambda b \rfloor\}$.

\begin{figure}[h]
\begin{center}
\includegraphics[scale=1]{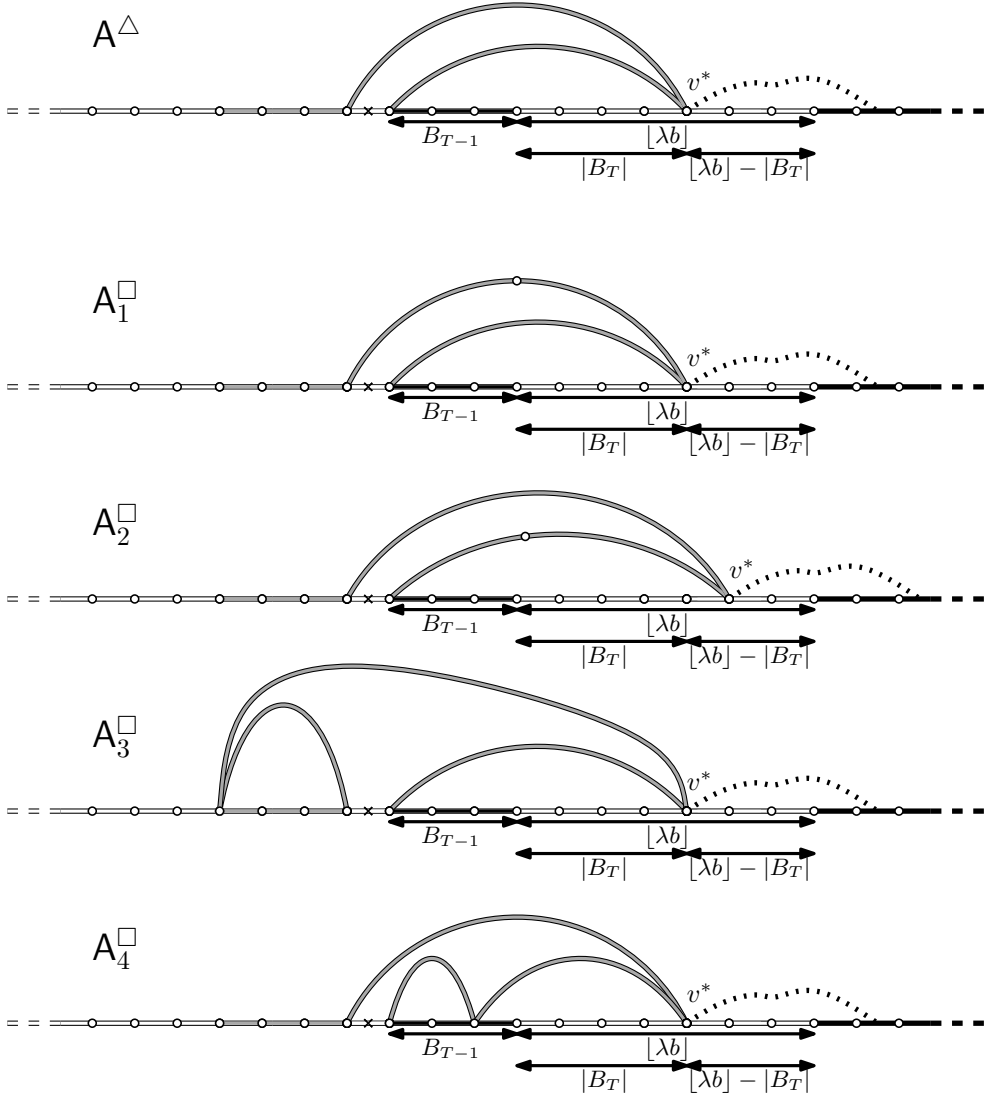}
\end{center}
\caption{Configurations when $\vert  B_{T} \vert<\lfloor \lambda b \rfloor$.}
\label{Case1bond}
\end{figure}

\vspace{3mm}

In each case, the crossing event $\mathsf{C}^*(\lambda a, \lambda b)$ implies the existence of an open path linking the rightmost revealed vertex on the boundary, denoted by $v^*$, and the infinite black segment on the boundary (see the dashed path on Figure \ref{Case1bond}). Indeed, the exploration process has the property that the open connected component of the initial finite black segment is entirely swallowed by the last face that is revealed. (For the crossing event to occur, one also needs that some bottom edges of this face are open, see Figure \ref{Case1bond}, but an upper bound is enough for our purpose.)

The situation may again be simplified using the spatial Markov property, because this event has the same probability as the following: consider a map which has the law of the UIHP-$*$, and let $\mathsf{C}^1_{\lambda}$ be the event that there exists a path linking the origin of the map and the infinite black segment of the boundary, where the boundary of the map has the colouring of Figure \ref{Crossing1} (the crossing event $\mathsf{C}^1_{\lambda}$ is again represented with a dashed path). Notice that the free segment on the boundary can have arbitrary but finite length (depending on the previous peeling steps) but this plays no role in our discussion.

\begin{figure}[h!]
\begin{center}
\includegraphics[scale=1.6]{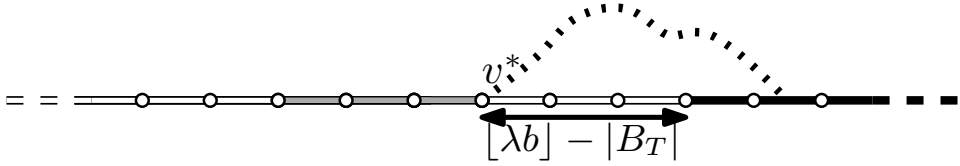}
\end{center}
\caption{The crossing event $\mathsf{C}^1_{\lambda}$.}
\label{Crossing1}
\end{figure}

\vspace{3mm}

Roughly speaking, the idea here is that when $\lambda$ becomes large, so does the length $d^{\lambda}:= \lfloor \lambda b \rfloor-\vert B_{T} \vert$ of the white segment that separates the origin from the black segment and the event $\mathsf{C}^1_{\lambda}$ becomes unlikely. To be precise, the following lemma holds.

\begin{Lem}\label{LemPrelimCase1}
Conditionally on $\vert  B_{T} \vert<\lfloor \lambda b \rfloor$, we have
$$\lfloor \lambda b \rfloor-\vert B_{T} \vert \overset{\mathbb{P}}{\underset{\lambda \rightarrow + \infty}{\longrightarrow}} +\infty.$$
\end{Lem}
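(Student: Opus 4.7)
The plan is to deduce the lemma from the scaling limit Proposition \ref{cvgscallim} together with the absolute continuity of the overshoot distribution provided by Proposition \ref{stableovershoot}. Throughout, write $Y$ for the scaling limit process (a spectrally negative $3/2$-stable Lévy process scaled by $\kappa$ and starting near $a$) and let $\tau := \inf\{t \geq 0 : Y_t \leq 0\}$.

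First, I would upgrade Proposition \ref{cvgscallim} to a statement about the overshoot. Since $(B_{\lfloor \lambda^{3/2} t \rfloor}/\lambda)_{t \geq 0}$ converges in distribution in the Skorokhod topology to $(Y_t)_{t \geq 0}$, and since the limit is a Lévy process without positive jumps (so first entrance into the open half-line $(-\infty,0)$ is a continuous functional of the trajectory at the limit, as the limiting paths hit $(-\infty,0)$ by a downward jump almost surely), a standard continuous-mapping argument gives
\[
\frac{|B_T|}{\lambda} \ \underset{\lambda \rightarrow +\infty}{\overset{(d)}{\longrightarrow}} \ |Y_\tau|.
\]
By Proposition \ref{stableovershoot}, the law of $|Y_\tau|$ is absolutely continuous with respect to Lebesgue measure on $\mathbb{R}_+$, and in particular it admits a density which is bounded on any compact interval of $(0, +\infty)$.

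Next, fix $K>0$ and $\varepsilon > 0$. For all $\lambda$ large enough that $K/\lambda < \varepsilon$, one has the inclusion of events
\[
\left\{ \lfloor \lambda b \rfloor - K \leq |B_T| < \lfloor \lambda b \rfloor \right\} \ \subseteq \ \left\{ \frac{|B_T|}{\lambda} \in [b-\varepsilon, b] \right\}.
\]
The set $[b-\varepsilon, b]$ has a boundary of zero Lebesgue measure, hence by the convergence in distribution above,
\[
\limsup_{\lambda \to +\infty} \mathbb{P}\!\left( \frac{|B_T|}{\lambda} \in [b-\varepsilon, b] \right) \leq \mathbb{P}(|Y_\tau| \in [b-\varepsilon, b]),
\]
and the right-hand side is $O(\varepsilon)$ by the boundedness of the density near $b$. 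Since $\varepsilon>0$ is arbitrary,
\[
\mathbb{P}(\lfloor \lambda b \rfloor - K \leq |B_T| < \lfloor \lambda b \rfloor) \ \underset{\lambda \rightarrow +\infty}{\longrightarrow} \ 0.
\]
On the other hand, by the same weak convergence and absolute continuity at the point $b$,
\[
\mathbb{P}(|B_T| < \lfloor \lambda b \rfloor) \ \underset{\lambda \rightarrow +\infty}{\longrightarrow} \ \mathbb{P}(|Y_\tau| < b) > 0,
\]
the positivity coming from the fact that the density of $|Y_\tau|$ is strictly positive on a neighbourhood of $0$ (as read off from the formula in Proposition \ref{stableovershoot}). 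Dividing the two estimates yields
\[
\mathbb{P}\!\left( \lfloor \lambda b \rfloor - |B_T| \leq K \ \Big| \ |B_T| < \lfloor \lambda b \rfloor \right) \ \underset{\lambda \rightarrow +\infty}{\longrightarrow} \ 0,
\]
for every $K>0$, which is exactly the claimed convergence to $+\infty$ in probability.

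The main obstacle is the first step, namely justifying that the discrete overshoot $|B_T|/\lambda$ inherits a scaling limit from the Skorokhod convergence of the whole trajectory. This is where the spectrally negative nature of the stable limit and the fact that its overshoot has a diffuse law are both essential: together they make the map ``trajectory $\mapsto$ overshoot at first passage below $0$'' continuous at $\mathbb{P}$-almost every path of $Y$, so that the continuous mapping theorem applies. Once this is in place, everything else is a short density estimate.
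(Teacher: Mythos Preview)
Your proof is correct and follows essentially the same route as the paper: both rely on Proposition \ref{cvgscallim} to obtain a scaling limit for the overshoot $|B_T|/\lambda$, and then exploit the absolute continuity of the stable overshoot law (Proposition \ref{stableovershoot}) to conclude. The paper phrases the final step as a conditional convergence in distribution of $\lambda^{-1}(\lfloor\lambda b\rfloor-|B_T|)$ towards $b-\kappa|\mathcal{S}_\tau|$ (given $\kappa|\mathcal{S}_\tau|<b$), which is almost surely strictly positive; you instead work unconditionally and divide numerator by denominator, which is slightly more explicit but amounts to the same thing.
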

\begin{proof}Recall that $B_0=\lfloor \lambda a \rfloor$ almost surely by definition and observe that if 

\begin{equation}\label{tildeT}
  \tilde{T}:=\inf\left\lbrace t\geq 0 : \lambda^{-1}  B_{\lfloor \lambda^{3/2} t\rfloor} \leq 0\right\rbrace,
\end{equation} then $\lfloor \lambda^{3/2} \tilde{T}\rfloor=T$. Thus, using Proposition \ref{cvgscallim}, we get that conditionally given $\vert B_{T} \vert<\lfloor \lambda b \rfloor$,

$$\lambda^{-1} B_{T}  \overset{(d)}{\underset{\lambda \rightarrow +\infty}{\longrightarrow}} \kappa \mathcal{S}_{\tau} \ \left( \textrm{given} \ \kappa\vert\mathcal{S}_{\tau}\vert<b\right),$$ where $\tau:= \inf\{t\geq 0 : \mathcal{S}_t \leq 0\}$ and $\mathcal{S}$ is the spectrally negative Lévy $3/2$-stable process started at $a$. As a consequence, we have that conditionally given $\vert B_{T} \vert<\lfloor \lambda b \rfloor$,
$$\lambda^{-1} (\lfloor \lambda b \rfloor-\vert B_{T}\vert) \overset{(d)}{\underset{\lambda \rightarrow +\infty}{\longrightarrow}} b-\kappa\mathcal{S}_{\tau} \ \left( \textrm{given} \ \kappa\vert\mathcal{S}_{\tau}\vert<b\right),$$ 
Now, following Proposition \ref{stableovershoot}, the distribution of the overshoot of the process $\mathcal{S}$ is absolutely continuous with respect to the Lebesgue measure on $\mathbb{R}_+$, which yields the expected result.\end{proof}

\vspace{5mm}

\textbf{Case 2: $\vert  B_{T} \vert>\lfloor \lambda b \rfloor$.} In this situation, the whole finite white segment of the boundary has been swallowed by the last revealed face, which divides the problem into different subcases shown in Figure \ref{conf2}. Unless explicitly mentioned, we now work conditionally on the event $\{\vert  B_{T} \vert>\lfloor \lambda b \rfloor\}$. 

\begin{figure}[h]
\begin{center}
\includegraphics[scale=1]{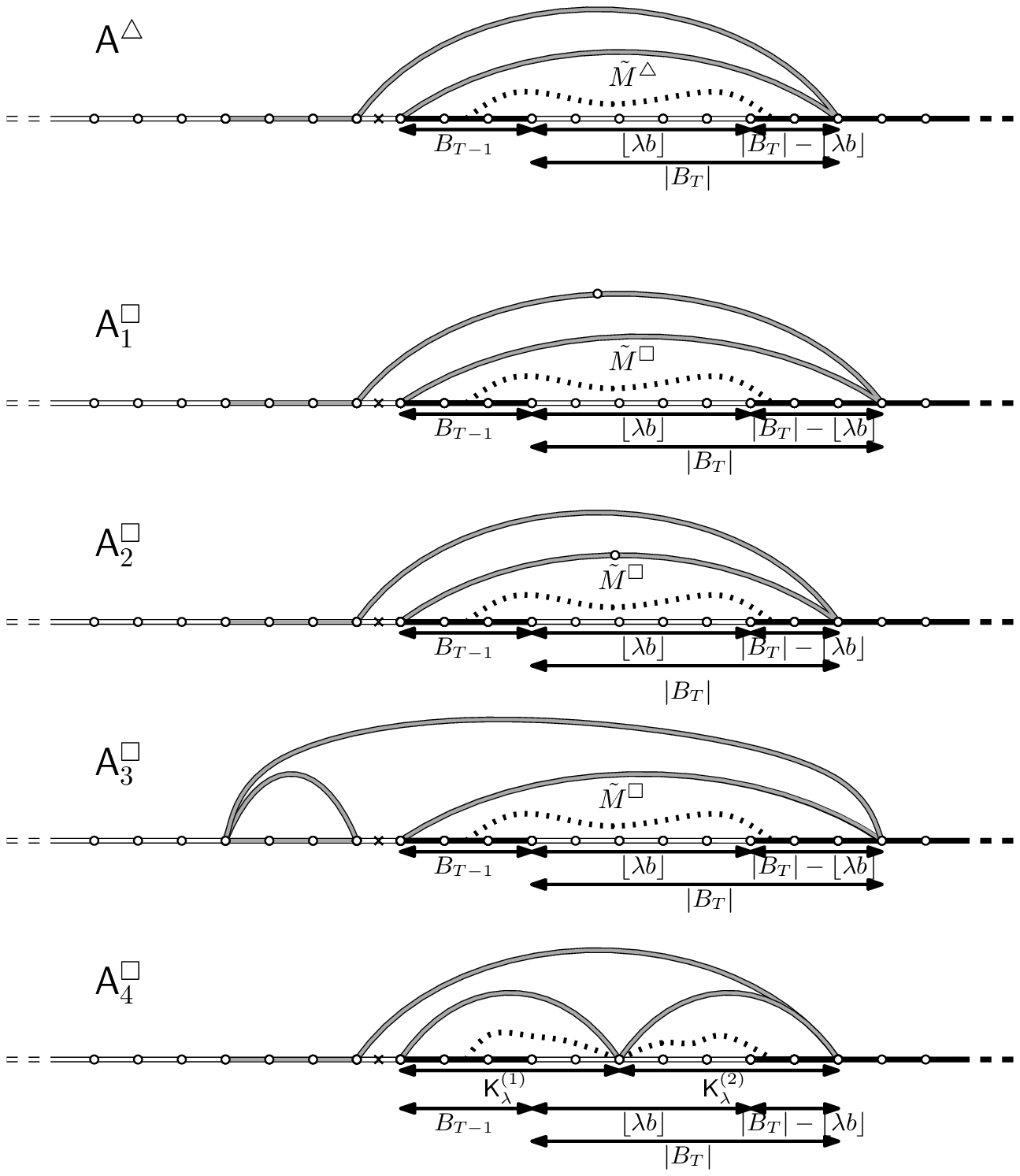}
\end{center}
\caption{Configurations when $\vert  B_{T} \vert>\lfloor \lambda b \rfloor$.}
\label{conf2}
\end{figure}

\vspace{3mm}

In the first four cases $(\mathsf{A}^{\triangle}-\mathsf{A}^{\square}_3)$, either the crossing event is realized by bottom edges of the last revealed face (which occurs with probability less than one), or it depends on a crossing event between black segments of the boundary in a submap $\tilde{M}^*$, which is a Boltzmann $*$-angulation of the $(B_{T-1} + \vert B_{T} \vert + 1)$-gon (or $(B_{T-1} + \vert B_{T} \vert + 2)$-gon, depending on the configuration). Those events are represented by a dashed path on Figure \ref{conf2}. The reason for this is that the finite black segment on the boundary at the second to last step of  the peeling process is in the same cluster as the initial one (those even share edges).

Roughly speaking, the idea here is that when $\lambda$ becomes large, this Boltzmann $*$-angulation converges in law towards the UIHP-$*$ with infinite black segments on the boundary, so that the above crossing event occurs almost surely asymptotically. The fifth case $\mathsf{A}^{\square}_4$ is slightly different, because the crossing represented by the dashed path seems unlikely. In fact, this fifth case does not occur asymptotically in a sense we will make precise. We will need the following analogue of Lemma \ref{LemPrelimCase1} concerning the size of the black segments that appear in the Boltzmann map.

\begin{Lem}\label{LemPrelimCase2}
Conditionally on $\vert  B_{T} \vert>\lfloor \lambda b \rfloor$, we have $$\vert B_{T} \vert-\lfloor \lambda b \rfloor \overset{\mathbb{P}}{\underset{\lambda \rightarrow + \infty}{\longrightarrow}} +\infty \quad \text{and} \quad B_{T-1} \overset{\mathbb{P}}{\underset{\lambda \rightarrow + \infty}{\longrightarrow}} +\infty,$$ 
and for every $\beta >1$, 

$$\lambda^{-\beta}(\vert B_{T} \vert-\lfloor \lambda b \rfloor) \overset{\mathbb{P}}{\underset{\lambda \rightarrow + \infty}{\longrightarrow}} 0 \quad\text{ and } \quad \lambda^{-\beta}B_{T-1} \overset{\mathbb{P}}{\underset{\lambda \rightarrow + \infty}{\longrightarrow}} 0.$$

\end{Lem}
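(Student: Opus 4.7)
The plan is to combine the Skorokhod scaling limit of Proposition \ref{cvgscallim} with the joint absolute continuity of the undershoot and overshoot in Proposition \ref{stableovershoot}. By Proposition \ref{cvgscallim}, the rescaled process $(\lambda^{-1} B_{\lfloor \lambda^{3/2} t \rfloor})_{t\geq 0}$ converges in law, for the Skorokhod topology, to the spectrally negative $3/2$-stable process $(\kappa \mathcal{S}_t)_{t \geq 0}$ started at $a$. Setting $\tau := \inf\{t \geq 0 : \mathcal{S}_t \leq 0\}$ and recalling $\tilde{T}$ from (\ref{tildeT}), we have $\lfloor \lambda^{3/2} \tilde{T} \rfloor = T$, and $\tilde{T}$ converges in distribution to $\tau$.

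The key step is to promote this to the joint convergence
\[
\lambda^{-1}\bigl(B_{T-1},\, \vert B_T \vert\bigr) \overset{(d)}{\underset{\lambda \to +\infty}{\longrightarrow}} \bigl(\kappa \mathcal{S}_{\tau-},\, \kappa \vert \mathcal{S}_\tau \vert\bigr).
\]
Because $\mathcal{S}$ is spectrally negative, almost surely it enters $\mathbb{R}_-$ at time $\tau$ by a single downward jump with $\mathcal{S}_{\tau-} > 0$ and $\mathcal{S}_\tau < 0$; such cadlag paths are continuity points of the functional $x \mapsto (\tau(x),\, x(\tau(x)-),\, x(\tau(x)))$ for the Skorokhod topology, so the continuous mapping theorem yields the claim. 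Proposition \ref{stableovershoot} then asserts that $(\mathcal{S}_{\tau-}, \vert \mathcal{S}_\tau \vert)$ is absolutely continuous on $\mathbb{R}_+^2$, hence so is its law conditional on the positive-probability event $\{\kappa \vert \mathcal{S}_\tau \vert > b\}$. In particular, the conditional marginals of $\kappa \mathcal{S}_{\tau-}$ and of $\kappa \vert \mathcal{S}_\tau \vert - b$ are supported on $(0,+\infty)$ with no atom at $0$.

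Since the discrete conditioning $\{\vert B_T \vert > \lfloor \lambda b \rfloor\}$ passes in the limit to $\{\kappa \vert \mathcal{S}_\tau \vert > b\}$ (the overshoot has no atom at $b$), it follows that, conditionally on $\vert B_T \vert > \lfloor \lambda b \rfloor$, the quantities $\lambda^{-1}(\vert B_T \vert - \lfloor \lambda b \rfloor)$ and $\lambda^{-1} B_{T-1}$ converge in law to strictly positive random variables, which immediately yields the two divergences to $+\infty$ in probability. The upper bounds follow by Slutsky's lemma: for $\beta > 1$, writing
\[
\lambda^{-\beta}\bigl(\vert B_T \vert - \lfloor \lambda b \rfloor\bigr) = \lambda^{1-\beta}\cdot \lambda^{-1}\bigl(\vert B_T \vert - \lfloor \lambda b \rfloor\bigr)
\]
and observing that $\lambda^{1-\beta} \to 0$, the product converges to $0$ in probability; the argument for $B_{T-1}$ is identical.

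The main obstacle is justifying the Skorokhod continuity of the undershoot-overshoot functional at typical paths of $\mathcal{S}$. This is classical for spectrally negative stable processes but deserves explicit verification, for instance by localizing around the almost surely isolated jump time $\tau$ and using that the limit process crosses the level $0$ strictly, so that any cadlag path sufficiently close to it in the Skorokhod sense must also cross $0$ by a jump of comparable size occurring at a nearby time.
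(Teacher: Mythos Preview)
Your proof is correct and follows essentially the same approach as the paper: both derive the joint convergence $\lambda^{-1}(B_{T-1},|B_T|)\to\kappa(\mathcal{S}_{\tau-},|\mathcal{S}_\tau|)$ from Proposition~\ref{cvgscallim} and then appeal to the absolute continuity in Proposition~\ref{stableovershoot}. You are in fact slightly more explicit than the paper, spelling out the Skorokhod continuity issue for the undershoot--overshoot functional and the Slutsky argument for the $\lambda^{-\beta}$ bounds, both of which the paper leaves implicit.
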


\begin{proof} The proof is analogous to that of Lemma \ref{LemPrelimCase1}. Again, recall that $B_0=\lfloor \lambda a \rfloor$ almost surely by definition and that if $\tilde{T}:=\inf\{t\geq 0 : \lambda^{-1}  B_{\lfloor \lambda^{3/2} t\rfloor} \leq 0\}$, then $\lfloor \lambda^{3/2} \tilde{T}\rfloor=T$. Applying Proposition \ref{cvgscallim}, we get that conditionally given $\vert B_{T} \vert>\lfloor \lambda b \rfloor$,

\begin{equation}\label{CvgLawCouple}
  \lambda^{-1} (B_{T-1},B_{T}) \overset{(d)}{\underset{\lambda \rightarrow +\infty}{\longrightarrow}} \kappa (\mathcal{S}_{\tau-},\mathcal{S}_{\tau}) \ \left( \textrm{given} \ \kappa\vert\mathcal{S}_{\tau}\vert>b\right),
\end{equation} where $\tau:= \inf\{t\geq 0 : \mathcal{S}_t \leq 0\}$ and $\mathcal{S}$ is the spectrally negative Lévy $3/2$-stable process started at $a$. Therefore, we have that conditionally given $\vert B_{T} \vert>\lfloor \lambda b \rfloor$,
$$\lambda^{-1} (B_{T-1},\vert B_{T}\vert-\lfloor \lambda b \rfloor) \overset{(d)}{\underset{\lambda \rightarrow +\infty}{\longrightarrow}} (\kappa\mathcal{S}_{\tau-},\kappa\mathcal{S}_{\tau}-b) \ \left( \textrm{given} \ \kappa\vert\mathcal{S}_{\tau}\vert>b\right),$$ We conclude the proof exactly as before, using the absolute continuity of the joint distribution of the undershoot and the overshoot of the process $\mathcal{S}$ with respect to the Lebesgue measure, given by Proposition \ref{stableovershoot}.\end{proof}

\vspace{3mm}

We now treat the fifth configuration, denoted by $\mathsf{A}^{\square}_4$ in Figure \ref{conf2}, which occurs only in the quadrangular case. Precisely, we work on the event (still denoted $\mathsf{A}^{\square}_4$) that the last revealed quadrangle has all of its four vertices lying on the boundary of the map, two of them being on the right of the root edge. On this event, we can introduce $\mathsf{K}^{(1)}_{\lambda}$ and $\mathsf{K}^{(2)}_{\lambda}$, which are the (random) lengths of the two finite segments determined on the right boundary by the last revealed quadrangle (see Figure \ref{conf2} and a formal definition in (\ref{DefK})). 

\begin{Lem}\label{LemK1K2} Conditionally on $\vert  B_{T} \vert>\lfloor \lambda b \rfloor$ and on the event $\mathsf{A}^{\square}_4$, we have

$$\lambda^{-1}\min\left(\mathsf{K}^{(1)}_{\lambda},\mathsf{K}^{(2)}_{\lambda}\right)\overset{\mathbb{P}}{\underset{\lambda \rightarrow + \infty}{\longrightarrow}} 0.$$ As a consequence,

$$\max\left(B_{T-1}-\mathsf{K}^{(1)}_{\lambda}, \vert B_{T}\vert - \lfloor \lambda b \rfloor-\mathsf{K}^{(2)}_{\lambda}\right)\overset{\mathbb{P}}{\underset{\lambda \rightarrow + \infty}{\longrightarrow}} +\infty.$$
\end{Lem}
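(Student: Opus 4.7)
Both statements follow from a careful use of the explicit weights $q_{k_{1},k_{2}}^{\square}$ from Proposition \ref{quadcase}. By the spatial Markov property, conditionally on the peeling history up to time $T-1$ and on the event $\mathsf{A}^{\square}_{4}$, the pair $(\mathsf{K}^{(1)}_{\lambda},\mathsf{K}^{(2)}_{\lambda})$ takes value $(k_{1},k_{2})$, with $k_{1},k_{2}$ odd and $k_{1}+k_{2}>B_{T-1}+\lfloor\lambda b\rfloor$ (this last condition being equivalent to $|B_{T}|>\lfloor\lambda b\rfloor$), with weight proportional to $q_{k_{1},k_{2}}^{\square}=Z_{k_{1}+1}^{\square}Z_{k_{2}+1}^{\square}\alpha_{\square}^{-k_{1}-k_{2}}$. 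The asymptotic (\ref{equivZ}) ensures that for large $k_{1},k_{2}$ the exponential factors $\alpha_{\square}^{k_{1}+k_{2}}$ cancel and this weight is of order $(k_{1}k_{2})^{-5/2}$, a distribution strongly concentrated near the two coordinate axes.

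To quantify this concentration, we fix $\varepsilon>0$ and set $N:=B_{T-1}+\lfloor\lambda b\rfloor$. We bound the conditional probability $\mathbb{P}\left(\min(\mathsf{K}^{(1)}_{\lambda},\mathsf{K}^{(2)}_{\lambda})>\varepsilon\lambda\mid\mathsf{A}^{\square}_{4},|B_{T}|>\lfloor\lambda b\rfloor,B_{T-1}\right)$ by a ratio of weight sums. The numerator is at most $C\sum_{k_{1},k_{2}\geq\varepsilon\lambda}(k_{1}k_{2})^{-5/2}\leq C_{1}(\varepsilon\lambda)^{-3}$, using the elementary bound $\sum_{k\geq m}k^{-5/2}\leq Cm^{-3/2}$; the denominator is at least the contribution of the slice $\{k_{1}=1,k_{2}>N\}$, which is bounded below by $c_{1}N^{-3/2}$ (since $Z_{2}^{\square}$ is a positive constant and $Z_{k_{2}+1}^{\square}\alpha_{\square}^{-k_{2}}\sim \iota_{\square}\alpha_{\square}\,(k_{2}+1)^{-5/2}$). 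This yields an upper bound of the form $C_{2}\,\varepsilon^{-3}\lambda^{-3}N^{3/2}$. Since $\lambda^{-1}B_{T-1}$ is tight under the conditioning (which follows from the convergence in law (\ref{CvgLawCouple}) established in the proof of Lemma \ref{LemPrelimCase2}), we may restrict to the event $\{N\leq C_{3}\lambda\}$, of probability arbitrarily close to one; on this event the bound becomes $C_{4}\,\varepsilon^{-3}\lambda^{-3/2}\to 0$, which proves the first statement.

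The consequence follows by case analysis using the identity $\mathsf{K}^{(1)}_{\lambda}+\mathsf{K}^{(2)}_{\lambda}=B_{T-1}+|B_{T}|$ (the total number of right-swallowed edges at step $T-1$). On the event $\{\mathsf{K}^{(1)}_{\lambda}\leq\mathsf{K}^{(2)}_{\lambda}\}$ the first statement gives $\mathsf{K}^{(1)}_{\lambda}/\lambda\to 0$ in probability, and combined with $B_{T-1}\to+\infty$ (Lemma \ref{LemPrelimCase2}) this yields $B_{T-1}-\mathsf{K}^{(1)}_{\lambda}\to+\infty$ in probability. Symmetrically, on $\{\mathsf{K}^{(1)}_{\lambda}>\mathsf{K}^{(2)}_{\lambda}\}$ we have $\mathsf{K}^{(2)}_{\lambda}/\lambda\to 0$ and $|B_{T}|-\lfloor\lambda b\rfloor\to+\infty$, hence $|B_{T}|-\lfloor\lambda b\rfloor-\mathsf{K}^{(2)}_{\lambda}\to+\infty$. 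In either case the maximum diverges.

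The main difficulty is achieving uniformity in the random quantity $N$ when bounding the denominator from below, which is overcome by the tightness of $\lambda^{-1}B_{T-1}$ provided by (\ref{CvgLawCouple}); the parity restriction that $k_{1},k_{2}$ both be odd affects only multiplicative constants and plays no role.
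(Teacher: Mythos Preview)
Your argument for the first convergence takes a genuinely different route from the paper. The paper works \emph{unconditionally on the last face}: it introduces the i.i.d.\ sequence $(K^{(1)}_n,K^{(2)}_n)_{n\ge0}$, shows $\mathbb{P}(K^{(1)}_0\ge x,K^{(2)}_0\ge x)\le C'x^{-3}$, and then combines a union bound over the at most $\lambda^{3/2+\delta}$ steps before $T$ with the convergence $\lambda^{-3/2}T\overset{(d)}{\to}\tau$. Your approach instead freezes $B_{T-1}$ and computes the conditional law of $(\mathsf{K}^{(1)}_\lambda,\mathsf{K}^{(2)}_\lambda)$ directly from the weights $q^{\square}_{k_1,k_2}$, bounding a ratio of sums. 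Both are valid; the paper's method avoids having to control the conditional law of $B_{T-1}$ given $\mathsf{A}^{\square}_4$, while yours gives a sharper pointwise bound but needs one extra input: the tightness you invoke is stated in (\ref{CvgLawCouple}) only under the conditioning $\{|B_T|>\lfloor\lambda b\rfloor\}$, not under the further conditioning on $\mathsf{A}^{\square}_4$. You should say a word about why the additional conditioning does not destroy tightness (e.g.\ because $\liminf_\lambda\mathbb{P}(\mathsf{A}^{\square}_4\mid |B_T|>\lfloor\lambda b\rfloor)>0$, which follows from the fact that each face type contributing a large right-jump has tail of the same order $\sim c\,x^{-3/2}$).

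Your deduction of the consequence, however, has a genuine gap. You write that on $\{\mathsf{K}^{(1)}_\lambda\le \mathsf{K}^{(2)}_\lambda\}$ the facts $\mathsf{K}^{(1)}_\lambda/\lambda\to0$ and $B_{T-1}\to+\infty$ yield $B_{T-1}-\mathsf{K}^{(1)}_\lambda\to+\infty$. This implication is false in general: if for instance $\mathsf{K}^{(1)}_\lambda=\sqrt{\lambda}$ and $B_{T-1}=\log\lambda$, both hypotheses hold but the difference tends to $-\infty$. What you actually need, and what the paper uses, is the stronger input from (\ref{CvgLawCouple}) that $\lambda^{-1}B_{T-1}$ and $\lambda^{-1}(|B_T|-\lfloor\lambda b\rfloor)$ converge in law to strictly positive random variables; then $\lambda^{-1}(B_{T-1}-\mathsf{K}^{(1)}_\lambda)$ converges to something a.s.\ positive on the relevant event, forcing $B_{T-1}-\mathsf{K}^{(1)}_\lambda\to+\infty$. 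The paper packages this as the single inequality
\[
\max\!\left(B_{T-1}-\mathsf{K}^{(1)}_{\lambda},\, |B_T|-\lfloor\lambda b\rfloor-\mathsf{K}^{(2)}_{\lambda}\right)\;\ge\;\min\!\left(B_{T-1},\,|B_T|-\lfloor\lambda b\rfloor\right)-\min\!\left(\mathsf{K}^{(1)}_{\lambda},\mathsf{K}^{(2)}_{\lambda}\right),
\]
which after dividing by $\lambda$ converges in law to a strictly positive limit. Replacing your appeal to ``$B_{T-1}\to+\infty$'' by this scaling statement fixes the argument.
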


\begin{proof} Let us introduce the sequence of random variables $(K^{(1)}_n,K^{(2)}_n)_{n\geq 0}$, that describe the length of the two finite segments determined on the right boundary by the quadrangle revealed at step $n$ of the exploration process. We adopt the convention that $K^{(1)}_n=K^{(2)}_n=0$ if there are less than two segments determined on the right boundary. In particular, these random variables are i.i.d., and on the event $A^{\square}_4$, we have the identity

\begin{equation}\label{DefK}
	\left(\mathsf{K}^{(1)}_{\lambda},\mathsf{K}^{(2)}_{\lambda}\right):=\left(K^{(1)}_{T-1},K^{(2)}_{T-1}\right).
\end{equation} Now, for every $x>0$,

$$\mathbb{P}\left( K^{(1)}_0\geq x, K^{(2)}_0\geq x \right)=\sum_{k_1, k_2\geq x}{q^{\square}_{k_1,k_2}},$$ and $q^{\square}_{k_1,k_2}$ is equivalent to $\iota_{\square}k_1^{-5/2}k_2^{-5/2}$ when $k_1,k_2$ go to infinity, using the estimation (\ref{equivZ}). Thus, if $x$ is large enough, we have $q^{\square}_{k_1,k_2}\leq Ck_1^{-5/2}k_2^{-5/2}$ for every $k_1,k_2\geq x$, where $C$ is a positive constant. This yields

\begin{equation}\label{estimate}
	\mathbb{P}\left( K^{(1)}_0\geq x, K^{(2)}_0\geq x \right)\leq C\sum_{k_1, k_2\geq x}{k_1^{-5/2}k_2^{-5/2}}\leq C'x^{-3},
\end{equation} for another positive constant $C'$, provided that $x$ is large enough. Then, for every $\lambda>0$, $\varepsilon>0$ and $\delta >0$ we have

$$\mathbb{P}\left(K^{(1)}_{T-1}\geq \lambda \varepsilon ,K^{(2)}_{T-1}\geq \lambda \varepsilon \right)\leq \mathbb{P}\left( T > \lambda^{3/2+\delta} \right) + \mathbb{P}\left( \exists \ 0\leq n < \lambda^{3/2+\delta} \mid K^{(1)}_{n}\geq \lambda \varepsilon,K^{(2)}_{n}\geq \lambda \varepsilon \right).$$ For the first term, using the random variable $\tilde{T}:=\inf\{t\geq 0 : \lambda^{-1}  B_{\lfloor \lambda^{3/2} t\rfloor} \leq 0\}$ introduced in (\ref{tildeT}), we have $\lfloor \lambda^{3/2} \tilde{T}\rfloor=T$ and from Proposition \ref{cvgscallim}

$$\lambda^{-3/2}T \overset{(d)}{\underset{\lambda \rightarrow +\infty}{\longrightarrow}} \tau,$$ where $\tau:= \inf\{t\geq 0 : \mathcal{S}_t \leq 0\}$ and $\mathcal{S}$ is the spectrally negative Lévy $3/2$-stable process. The random variable $\tau$ being almost surely finite by standard properties of this process, we obtain that

$$\mathbb{P}\left( T > \lambda^{3/2+\delta} \right)\underset{\lambda \rightarrow +\infty}{\longrightarrow}0.$$ The second term can be controlled straightforwardly from (\ref{estimate}) using an union bound, giving

$$\mathbb{P}\left( \exists \ 0\leq n < \lambda^{3/2+\delta} \mid K^{(1)}_{n}\geq \lambda \varepsilon,K^{(2)}_{n}\geq \lambda \varepsilon \right)\leq \lambda^{3/2+\delta}C'(\lambda\varepsilon)^{-3}\underset{\lambda \rightarrow +\infty}{\longrightarrow}0.$$ We can now conclude that

\begin{equation}\label{Part1}
  \lambda^{-1}\min\left(K^{(1)}_{T-1},K^{(2)}_{T-1}\right)\overset{\mathbb{P}}{\underset{\lambda \rightarrow + \infty}{\longrightarrow}} 0, 
\end{equation} convergence which still holds conditionally on $\vert  B_{T} \vert>\lfloor \lambda b \rfloor$, observing that the probability of this event converges when $\lambda$ goes to infinity by Proposition \ref{stableovershoot} and the convergence in law of Proposition \ref{cvgscallim} (see a precise statement in (\ref{CvgProbaOvershoot})).

On the other hand, using the convergence in law (\ref{CvgLawCouple}), conditionally given $\vert  B_{T} \vert>\lfloor \lambda b \rfloor$ we get that

\begin{align*}
&\lambda^{-1}\max\left(B_{T-1}-K^{(1)}_{T-1}, \vert B_{T}\vert - \lfloor \lambda b \rfloor-K^{(2)}_{T-1}\right)\\
&\geq \lambda^{-1} \min\left(B_{T-1},\vert B_{T}\vert-\lfloor \lambda b \rfloor \right) -\lambda^{-1}\min\left(K^{(1)}_{T-1},K^{(2)}_{T-1}\right)\\
&\overset{(d)}{\underset{\lambda \rightarrow +\infty}{\longrightarrow}} \min\left(\kappa\mathcal{S}_{\tau-},\kappa\mathcal{S}_{\tau}-b\right) \ \left( \textrm{given} \ \kappa\vert\mathcal{S}_{\tau}\vert>b\right)
\end{align*}and thus by the very same argument as in Lemma \ref{LemPrelimCase2}, conditionally on $\vert  B_{T} \vert>\lfloor \lambda b \rfloor$,

\begin{equation}\label{Part2}
  \max\left(B_{T-1}-K^{(1)}_{T-1}, \vert B_{T}\vert - \lfloor \lambda b \rfloor-K^{(2)}_{T-1}\right)\overset{\mathbb{P}}{\underset{\lambda \rightarrow + \infty}{\longrightarrow}} +\infty.
\end{equation} Applying the identity (\ref{DefK}), the expected result follows from the convergences in probability (\ref{Part1}) and (\ref{Part2}), restricted on $\mathsf{A}^{\square}_4$. \end{proof}


Intuitively, the consequence of this lemma is that only two situations may occur on the event $\mathsf{A}^{\square}_4$: either $B_{T-1}-\mathsf{K}^{(1)}_{\lambda}$ or $\vert B_{T}\vert - \lfloor \lambda b \rfloor-\mathsf{K}^{(2)}_{\lambda}$ is large, or more precisely, respectively $\mathsf{K}^{(1)}_{\lambda}$ or $\mathsf{K}^{(2)}_{\lambda}$ is small with respect to $\lambda$. This corresponds to the subcases $\mathsf{A}^{\square}_{4,1}$ and $\mathsf{A}^{\square}_{4,2}$ of Figure \ref{K1K2}. Roughly speaking, this means that quadrangles ``look like" triangles when $\lambda$ is large.

\begin{figure}[h]
\begin{center}
\includegraphics[scale=1]{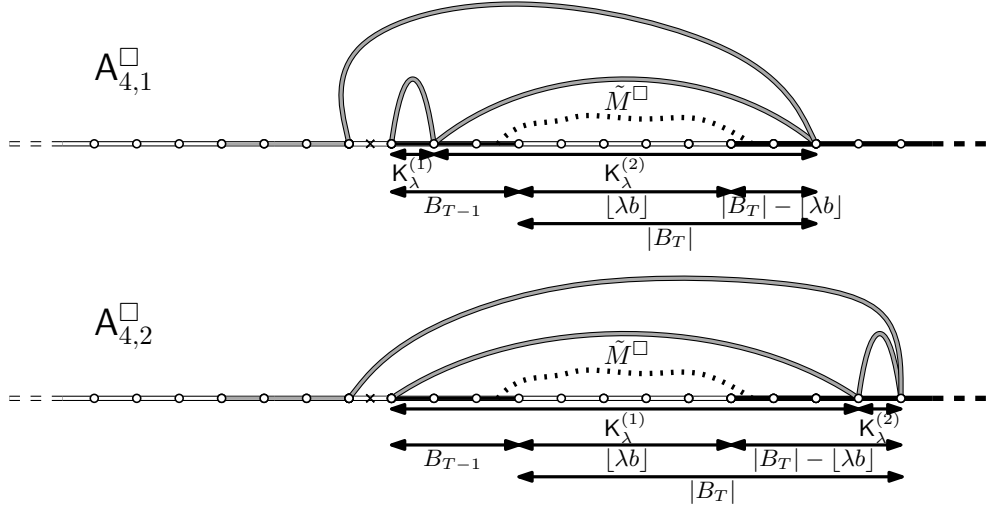}
\end{center}
\caption{The likely subcases on the event $\mathsf{A}^{\square}_4$.}
\label{K1K2}
\end{figure}

Let us be more precise here. The events $\mathsf{A}^{\square}_{4,1}$ and $\mathsf{A}^{\square}_{4,2}$ are defined as follows:

$$\mathsf{A}^{\square}_{4,1}:=\mathsf{A}^{\square}_4\cap\left\lbrace\mathsf{K}^{(1)}_{\lambda}<B_{T-1}\right\rbrace  \quad \text{ and } \quad \mathsf{A}^{\square}_{4,2}:=\mathsf{A}^{\square}_4\cap\left\lbrace\mathsf{K}^{(2)}_{\lambda}<\vert B_{T}\vert -\lfloor \lambda b \rfloor \right\rbrace.$$ Lemma \ref{LemK1K2} ensures that the probability of the event $\mathsf{A}^{\square}_4\cap\{\mathsf{K}^{(1)}_{\lambda}\geq B_{T-1},\mathsf{K}^{(2)}_{\lambda}\geq \vert B_{T}\vert -\lfloor \lambda b \rfloor\}$ vanishes when $\lambda$ becomes large, so that we can restrict our attention to the events $\mathsf{A}^{\square}_{4,1}$ and $\mathsf{A}^{\square}_{4,2}$ instead of $\mathsf{A}^{\square}_4$ in the next part, recalling that all the statements hold conditionally on the event $\{\vert  B_{T} \vert>\lfloor \lambda b \rfloor\}$.

Looking at the dashed paths on Figure \ref{K1K2} and using the above discussion, one gets that in every likely case, the crossing event $\mathsf{C}^{*}(\lambda a, \lambda b)$ depends on the simple crossing event in the submap $\tilde{M}^{*}$, which is always a Boltzmann $*$-angulation of a proper polygon thanks to the spatial Markov property.

The key quantities are $d_l^{\lambda}$ and $d_r^{\lambda}$, defined as the lengths of the two open segments on the boundary of that map (say with $d_r^{\lambda}$ the segment that is closer from the last edge we peeled), whose boundary condition is thus that represented on Figure \ref{Crossing2}. In the first four configurations $(\mathsf{A}^{\triangle}-\mathsf{A}^{\square}_3)$ described in Figure \ref{conf2}, we had $d_l^{\lambda}=B_{T-1}$ and $d_r^{\lambda}=\vert B_{T} \vert-\lfloor \lambda b \rfloor $. The situation is a bit more complicated in the cases of Figure \ref{K1K2}, but with the convention that $\mathsf{K}^{(1)}_{\lambda}=\mathsf{K}^{(2)}_{\lambda}=+\infty$ out of the event $\mathsf{A}^{\square}_4$, we have the identities

$$d_l^{\lambda}=B_{T-1}\boldsymbol{1}_{\left\lbrace\mathsf{K}^{(1)}_{\lambda}>B_{T-1}+\lfloor \lambda b\rfloor\right\rbrace}+\left(B_{T-1}-\mathsf{K}^{(1)}_{\lambda}\right)\boldsymbol{1}_{\left\lbrace\mathsf{K}^{(1)}_{\lambda}< B_{T-1}\right\rbrace},$$ and 
$$d_r^{\lambda}=(\vert B_{T} \vert-\lfloor \lambda b\rfloor)\boldsymbol{1}_{\left\lbrace\mathsf{K}^{(2)}_{\lambda}>\vert B_{T} \vert \right\rbrace}+\left(\vert B_{T} \vert-\lfloor \lambda b\rfloor-\mathsf{K}^{(2)}_{\lambda}\right)\boldsymbol{1}_{\left\lbrace\mathsf{K}^{(2)}_{\lambda}< \vert B_{T} \vert-\lfloor \lambda b\rfloor\right\rbrace}.$$ We now invoke Lemmas \ref{LemPrelimCase2} and \ref{LemK1K2}, together with the fact that $\mathsf{K}^{(1)}_{\lambda}+\mathsf{K}^{(2)}_{\lambda}=B_{T-1}+\vert B_{T}\vert$ on the event $\mathsf{A}^{\square}_4$, to get that (still conditionally on $\vert  B_{T} \vert>\lfloor \lambda b \rfloor$),

\begin{equation}\label{Cvginftyd}
  d_l^{\lambda} \overset{\mathbb{P}}{\underset{\lambda \rightarrow + \infty}{\longrightarrow}} +\infty \quad \text{ and } \quad d_r^{\lambda} \overset{\mathbb{P}}{\underset{\lambda \rightarrow + \infty}{\longrightarrow}} +\infty.
\end{equation} To sum up, we thus proved that conditionally on $\vert  B_{T} \vert>\lfloor \lambda b \rfloor$, the crossing event $\mathsf{C}^{*}(\lambda a, \lambda b)$ occurs almost surely when there is an open path between the two black segments on the boundary of the map of Figure \ref{Crossing2}, which is a Boltzmann $*$-angulation. In what follows, we denote by $\mathsf{C}^2_{\lambda}$ this event. Note that the length of the free segment on the boundary can either be $1$ or $2$ depending on the situation, and is thus uniformly bounded.

\begin{figure}[h]
\begin{center}
\includegraphics[scale=1]{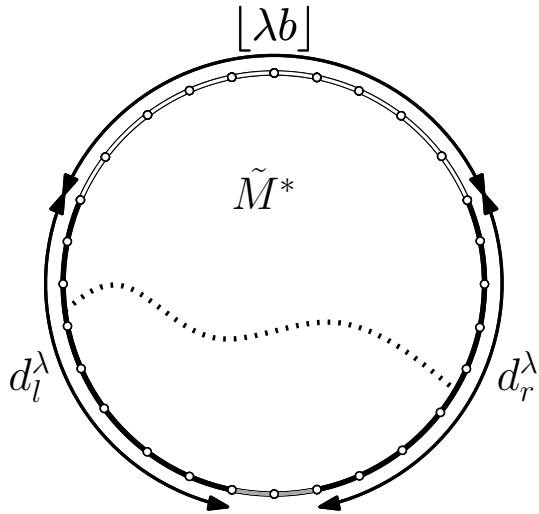}
\end{center}
\caption{The crossing event $\mathsf{C}^2_{\lambda}$.}
\label{Crossing2}
\end{figure}

\subsubsection{Asymptotic probabilities for submap crossings}

Throughout this section, we discuss the asymptotic probabilities of the events $\mathsf{C}^1_{\lambda}$ and $\mathsf{C}^2_{\lambda}$ we previously introduced. 

\vspace{5mm}

\textbf{1. The event $\mathsf{C}^1_{\lambda}$.} Recall that the event $\mathsf{C}^1_{\lambda}$ is the crossing event given by the existence of a path linking the origin of a UIHP-$*$ and the infinite open segment of the boundary, where the boundary of the map is coloured as in Figure \ref{Crossing1} (the free segment has arbitrary but finite length). We denote by $v^*$ the origin of the map, and recall that the length of the white segment located at the right of the origin satisfies from Lemma \ref{LemPrelimCase1} (we implicitly still work conditionally on $\vert  B_{T} \vert<\lfloor \lambda b \rfloor$) \begin{equation}\label{dInfty}
  d^{\lambda}:=\lfloor \lambda b \rfloor-\vert B_{T} \vert \overset{\mathbb{P}}{\underset{\lambda \rightarrow + \infty}{\longrightarrow}} +\infty.
\end{equation}For every $x\geq 0$, let $\partial^{(r)}_{x, \infty}$ be the set of vertices of the map that are located on the boundary, at least $x$ vertices away on the right of $v^*$. We are now able to compute the asymptotic probability we are interested in.

\begin{Prop}
\label{C1}
Conditionally on $\vert  B_{T} \vert<\lfloor \lambda b \rfloor$, we have
$$\mathbb{P}\left(\mathsf{C}_{\lambda}^1\right)\underset{\lambda \rightarrow + \infty}{\longrightarrow}0.$$ As a consequence, $\mathbb{P}(\mathsf{C}^{*}(\lambda a, \lambda b) \mid \vert  B_{T} \vert < \lfloor \lambda b \rfloor)\underset{\lambda \rightarrow + \infty}{\longrightarrow}0$.
\end{Prop}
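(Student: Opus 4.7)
The plan is to reduce, via the spatial Markov property applied at the stopping time $T$ of the first peeling phase, to studying the open cluster $\mathcal{C}(v^*)$ of $v^*$ in an independent fresh UIHP-$*$ endowed with the boundary colouring of Figure \ref{Crossing1}. In this new map, $v^*$ is separated from the infinite black segment along the boundary by a white segment of length $d^\lambda = \lfloor \lambda b \rfloor - |B_T|$, which tends to infinity in probability by Lemma \ref{LemPrelimCase1}. The strategy is therefore to show that $\mathcal{C}(v^*)$ almost surely has finite extent along the boundary of the fresh map, and then to combine this with the divergence of $d^\lambda$ via a union bound.

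To control $\mathcal{C}(v^*)$, the plan is to rerun a peeling process in the spirit of Algorithm \ref{AlgoBond} on the fresh UIHP-$*$, starting from an open edge incident to $v^*$, so that the initial finite black segment has length $O(1)$ and grows with the cluster. The analysis of Section \ref{PeelingScalingLimit} applies verbatim: at $p = p^*_{c,\mathrm{bond}}$ the associated walk $(B_n)_{n\geq 0}$ is centered, so its first hitting time $T_0$ of $\mathbb{Z}_-$ is almost surely finite, and the peeling terminates in at most $T_0$ steps. Since every peeling step adds at most $\mathcal{E}_n$ edges to the boundary and $\mathbb{E}(\mathcal{E}^*)<\infty$ by Proposition \ref{PropACExp}, the total boundary extent of $\mathcal{C}(v^*)$ is dominated by $\sum_{n=0}^{T_0}(1+\mathcal{E}_n)$, which is almost surely finite.

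The conclusion then follows from the union bound
\begin{equation*}
\mathbb{P}(\mathsf{C}^1_\lambda) \leq \mathbb{P}\bigl(\mathcal{C}(v^*) \text{ reaches boundary distance} \geq M\bigr) + \mathbb{P}(d^\lambda \leq M),
\end{equation*}
valid for every $M>0$, in which the first term tends to $0$ as $M\to\infty$ by the previous step and the second tends to $0$ for each fixed $M$ as $\lambda\to\infty$ by Lemma \ref{LemPrelimCase1}. The consequence for $\mathbb{P}(\mathsf{C}^*(\lambda a,\lambda b)\mid |B_T|<\lfloor\lambda b\rfloor)$ follows because, as remarked in the text, on this event any realization of $\mathsf{C}^*(\lambda a,\lambda b)$ forces an open path from $v^*$ to the infinite black segment in the fresh map, up to the colouring of the bounded number of bottom edges of the last revealed face. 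The main subtlety I foresee is to justify carefully that the peeling process genuinely terminates in a.s. finite time when exploring the cluster of a single edge, and that the geometric bound linking this termination time to the boundary extent of $\mathcal{C}(v^*)$ is tight enough; these are direct consequences of the zero-drift computation and the finite mean of $\mathcal{E}^*$, but deserve to be spelt out explicitly.
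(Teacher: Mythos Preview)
Your proof is correct in its overall architecture and shares the same union-bound skeleton as the paper's argument: both bound $\mathbb{P}(\mathsf{C}^1_\lambda)$ by $\mathbb{P}(\text{cluster of }v^*\text{ reaches boundary distance}\geq M)+\mathbb{P}(d^\lambda\leq M)$ and send $M\to\infty$ after $\lambda\to\infty$, using Lemma~\ref{LemPrelimCase1} for the second term. The difference lies in how the first term is controlled.

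The paper does not run a second peeling process. Instead, it introduces the decreasing events $\mathrm{V}_x=\{v^*\text{ is connected to }\partial^{(r)}_{x,\infty}\text{ without using right-boundary edges}\}$, observes that $\mathbb{P}(\mathsf{C}^1_\lambda)=\mathbb{P}(\mathrm{V}_{d^\lambda})$, and notes that the limiting event $\mathrm{V}=\cap_x\mathrm{V}_x$ forces the open cluster of $v^*$ to be infinite even after closing all right-boundary edges. Hence $\limsup_\lambda\mathbb{P}(\mathsf{C}^1_\lambda)\leq\mathbb{P}(\mathrm{V})\leq\Theta^*_{\mathrm{bond}}(p^*_{c,\mathrm{bond}})=0$, invoking directly the absence of percolation at criticality (Theorem~7 of \cite{angel_percolations_2015}). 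This is shorter and avoids any new exploration.

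Your route instead re-derives a version of this fact by peeling the cluster of $v^*$ from scratch. That is legitimate, but two points deserve care. First, the boundary colouring of Figure~\ref{Crossing1} has no black segment adjacent to $v^*$, so Algorithm~\ref{AlgoBond} does not apply verbatim; you need either to artificially open one edge incident to $v^*$ (and justify the monotone coupling) or to run a mirror-image exploration following the \emph{right} interface of $\mathcal{C}(v^*)$. Second, the bound $\sum_{n=0}^{T_0}(1+\mathcal{E}_n)$ is not the correct control on the \emph{rightward} boundary extent of the cluster: exposed edges measure growth of the free segment, not displacement along the original boundary. The relevant quantity is rather the total rightward swallowing $\sum_{n\leq T_0}\mathcal{R}_{r,n}$ (equivalently, the position of the rightmost explored boundary vertex), which is indeed a.s.\ finite since $T_0<\infty$ a.s. Once these are fixed, your argument goes through, but what you end up proving is precisely $\Theta^*_{\mathrm{bond}}(p^*_{c,\mathrm{bond}})=0$ in disguise, so citing it directly as the paper does is both cleaner and less error-prone.
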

\begin{proof}We first introduce the sequence of events $(\mathrm{V}_x)_{x\geq 0}$, where for every $x\geq 0$, $\mathrm{V}_x$ is the event that an open path links the origin $v^*$ of the map to the set of vertices $\partial^{(r)}_{x, \infty}$, without using edges of the right boundary. Then, we have that for every $\lambda>0$,
$$\mathbb{P}\left(\mathsf{C}_{\lambda}^1\right)= \mathbb{P}\left(\mathrm{V}_{d^{\lambda}}\right).$$ (We implicitly used that from any open path realizing $\mathsf{C}_{\lambda}^1$, we can extract one that does not use edges of the right boundary.) Now, $(\mathrm{V}_x)_{x\geq 0}$ is a decreasing sequence of events, so that for every $x\geq 0$ and $\lambda>0$,

$$\mathbb{P}\left(\mathsf{C}_{\lambda}^1\right)\leq \mathbb{P}\left(\mathrm{V}_{x}\right)+\mathbb{P}\left(d^{\lambda}\leq x \right).$$ Moreover, the limiting event

$$\mathrm{V}:=\bigcap_{x\geq 0}{\mathrm{V}_x}$$ is the event that infinitely many open paths from the origin intersect the right boundary (implicitly, at different vertices), again without using any edge of the right boundary. This event implies in particular that the cluster $\mathcal{C}$ of our map is still infinite if we set closed all the edges of the right boundary. Thus, the convergence (\ref{dInfty}) yields that

$$\limsup_{\lambda \rightarrow +\infty }\mathbb{P}\left(\mathsf{C}_{\lambda}^1\right)\leq\mathbb{P}\left(\mathrm{V}\right)\leq \Theta_{\rm{bond}}^*\left(p_{c,\rm{bond}}^*\right)=0.$$ The fact that there is no percolation at the critical point almost surely is straightforward from \cite{angel_percolations_2015}, Theorem 7. This result holds for a ``Free-White" boundary condition, and thus in our setting using the above remarks and a coupling argument. This ends the proof, using for the second statement the discussion of Section \ref{SectionStoppedPeeling}.\end{proof}

\textbf{2. The event $\mathsf{C}^2_{\lambda}$.} We now focus on the crossing event $\mathsf{C}^2_{\lambda}$, which is defined by the existence of an open path linking the two open segments on the boundary of a Boltzmann $*$-angulation, whose boundary is represented in Figure \ref{Crossing2}. Recall that the law of this map is denoted by $\mu_{v(\lambda)}^*$, where $v(\lambda)$ is the number of vertices on the boundary. The lengths of the open segments on the boundary are given by $d^{\lambda}_l$ and $d^{\lambda}_r$. For the sake of clarity, we may now assume that the free segment on the boundary has unit length, this single edge being chosen as the root of the map and denoted $w^*$ (the extension to the case with a free segment of length $2$ - or simply bounded - is immediate). For every $x\geq 0$, we let $\partial^{(l)}_{x}$ and $\partial^{(r)}_{x}$ be the sets of $x$ edges of the boundary of the map located immediately on the left and on the right of $w^*$. Finally, we set $d^{\lambda}:=\min(d^{\lambda}_l, d^{\lambda}_r)$. Using the previous arguments, in particular (\ref{Cvginftyd}) it is obvious that (again, we implicitly still work conditionally on $\vert  B_{T} \vert>\lfloor \lambda b \rfloor$)

\begin{equation}\label{Cvgces}
  v(\lambda)\underset{\lambda \rightarrow + \infty}{\longrightarrow} +\infty \quad \text{and} \quad  d^{\lambda}\overset{\mathbb{P}}{\underset{\lambda \rightarrow + \infty}{\longrightarrow}}+\infty.
\end{equation} In the next part, we will consider dual bond percolation on planar maps (i.e. bond percolation on the dual map), with the convention that the colour of a dual edge is the colour of the unique primal edge it crosses. We denote by $\Theta^*_{\rm{bond}'}$ and $p^*_{c,\rm{bond'}}$ the dual bond percolation probability and the dual bond percolation threshold on the UIHP-$*$. We can now prove that the event $\mathsf{C}_{\lambda}^2$ occurs asymptotically almost surely.

\begin{Prop}
\label{C2}
Conditionally on $\vert  B_{T} \vert>\lfloor \lambda b \rfloor$, we have $$\mathbb{P}\left(\mathsf{C}_{\lambda}^2\right)\underset{\lambda \rightarrow + \infty}{\longrightarrow} 1.$$ As a consequence, $\mathbb{P}(\mathsf{C}^{*}(\lambda a, \lambda b) \mid \vert  B_{T} \vert > \lfloor \lambda b \rfloor)\underset{\lambda \rightarrow + \infty}{\longrightarrow}1$.
\end{Prop}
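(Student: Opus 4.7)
The approach combines planar duality for bond percolation inside the Boltzmann $*$-angulation with the weak convergence $\mu^*_{v(\lambda)} \Rightarrow \nu^*_{\infty,\infty}$ and the absence of dual percolation at criticality, in order to show that the complementary event $\overline{\mathsf{C}_\lambda^2}$ has vanishing probability as $\lambda \to +\infty$.

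First, standard planar duality for bond percolation recasts $\overline{\mathsf{C}_\lambda^2}$: assuming the free edge $w^*$ is itself closed (the complementary case has probability $p^*_{c,\mathrm{bond}}$ and yields the crossing trivially, since $w^*$ directly joins endpoints of the two black arcs), the absence of an open primal path between the two black arcs is equivalent to the existence of a closed dual path joining a dual face on the ``left'' side of $w^*$ to a dual face on the ``right'' side through closed dual edges. In particular, the closed dual cluster containing one of the two primal faces adjacent to $w^*$ must then reach dual vertices corresponding to faces adjacent to both the left and the right white boundary arcs.

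Next, fix $M \geq 1$; by (\ref{Cvgces}) we have $d^\lambda = \min(d^\lambda_l, d^\lambda_r) \to +\infty$ and $v(\lambda) \to +\infty$ in probability, so it suffices to prove that for each fixed $M$,
\[
\limsup_{\lambda \to +\infty} \mathbb{P}\!\left(\overline{\mathsf{C}_\lambda^2},\ d^\lambda \geq M\right) \leq \varepsilon_M,
\]
with $\varepsilon_M \to 0$ as $M \to +\infty$. On $\{d^\lambda \geq M\}$, the closed dual cluster identified above has graph-distance diameter at least $M$ in the dual map, which is an event depending only on a neighborhood of bounded $M$-dependent size around the root. The weak convergence $\mu^*_{v(\lambda)} \Rightarrow \nu^*_{\infty,\infty}$ then guarantees that, for each fixed $M$, the Boltzmann probability of this event converges to the analogous probability under the UIHP-$*$ rerooted at $w^*$. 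By the dual analogue of Theorem 7 of \cite{angel_percolations_2015} --- absence of dual percolation at $p^*_{c,\mathrm{bond}}$, provable along the same peeling-based zero-one law combined with planar duality --- the closed dual cluster of any fixed face in the UIHP-$*$ is almost surely finite, so its diameter exceeds $M$ only with probability tending to $0$ as $M \to +\infty$. Sending $\lambda \to +\infty$ first, then $M \to +\infty$, yields $\mathbb{P}(\overline{\mathsf{C}_\lambda^2}) \to 0$; the second assertion, on $\mathbb{P}(\mathsf{C}^{*}(\lambda a, \lambda b) \mid \vert B_T \vert > \lfloor \lambda b \rfloor)$, then follows from the case analysis of Section \ref{SectionStoppedPeeling}.

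The main difficulty lies in justifying the ``no dual percolation at criticality'' input, namely that a primal bond configuration at the critical parameter $p^*_{c,\mathrm{bond}}$ induces a dual configuration with no infinite closed cluster; granting this, what remains is a fairly standard two-limit local-convergence argument (first $\lambda \to +\infty$ with $M$ fixed, then $M \to +\infty$).
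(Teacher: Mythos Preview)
Your approach is essentially the paper's: planar duality turns $\overline{\mathsf{C}^2_\lambda}$ into a closed dual crossing rooted near $w^*$, local convergence $\mu^*_{v(\lambda)}\Rightarrow\nu^*_{\infty,\infty}$ transfers the finite-range event to the UIHP-$*$, and absence of dual bond percolation at $1-p^*_{c,\mathrm{bond}}$ (Section 3.4.1 of \cite{angel_percolations_2015}) kills the limit. Two points deserve tightening. First, the boundary of the Boltzmann polygon has a \emph{single} white arc of length $\lfloor\lambda b\rfloor$ (not ``left and right white arcs''); the correct dual statement is that $\overline{\mathsf{C}^2_\lambda}$ forces a closed dual path from a face incident to the free arc to a face incident to the white arc, hence to a boundary edge at boundary-distance at least $d^\lambda$ from $w^*$. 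Second, this does not directly give dual graph-diameter $\geq M$ (the white arc could be close in the ambient graph metric), so the event you feed into local convergence should be ``the closed dual cluster of the root face meets $\partial^\infty_M$'' rather than ``has diameter $\geq M$''; the paper handles this by introducing the ball-restricted events $\mathsf{V}_{x,R}$ and then letting $R\to\infty$ after $\lambda\to\infty$. With these corrections your two-limit argument goes through and matches the paper's proof.
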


\begin{proof}Let us first introduce the sequence of events $(\mathsf{V}_x)_{x\geq 0}$, where for every $x\geq 0$, $\mathsf{V}_x$ is the event that an open path links a vertex of $\partial^{(l)}_{x}$ to a vertex of $\partial^{(r)}_{x}$, without using edges of the boundary of the map (except possibly the root edge). Then, we have

$$\mathbb{P}\left(\mathsf{C}_{\lambda}^2\right)\geq \mathbb{P}\left(\mathsf{C}_{\lambda}^2,d^{\lambda}>x\right) \geq \mathbb{P}\left(\mathsf{V}_x,d^{\lambda}>x\right)\geq\mathbb{P}\left(\mathsf{V}_x\right)-\mathbb{P}\left(d^{\lambda}\leq x\right).$$ We emphasize that the quantity $\mathbb{P}\left(\mathsf{V}_x\right)$ still depends on $\lambda$ through the law of the map we consider, whose boundary has size $v(\lambda)$. We now want to restrict the events $(\mathsf{V}_x)_{x\geq 0}$ to a finite subset of the map. To do so, we introduce for every $x\geq 0$ and $R\geq 0$ the event $\mathsf{V}_{x,R}$, which is defined exactly as $\mathsf{V}_x$ with the additional condition that the crossing event occurs in the ball of radius $R$ of the map for the graph distance (and from the origin). We still have for every $x\geq 0$ and $R\geq 0$ that

$$\mathbb{P}\left(\mathsf{C}_{\lambda}^2\right)\geq \mathbb{P}\left(\mathsf{V}_{x,R}\right)-\mathbb{P}\left(d^{\lambda}\leq x\right).$$ Now, the event $\mathsf{V}_{x,R}$ is by construction measurable with respect to the ball of radius $R$ of the map, so that the convergence in law $\mu^*_{v(\lambda)} \underset{\lambda \rightarrow +\infty}{\Longrightarrow}\nu^*_{\infty,\infty}$ for the local topology implies that for every fixed $x\geq 0$ and $R\geq 0$,

$$\mathbb{P}\left(\mathsf{V}_{x,R}\right)\underset{\lambda \rightarrow + \infty}{\longrightarrow}\nu^*_{\infty,\infty}\left(\mathsf{V}_{x,R}\right).$$ Using also the convergence (\ref{Cvgces}), we get 

$$\liminf_{\lambda \rightarrow +\infty}{\mathbb{P}\left(\mathsf{C}_{\lambda}^2\right)}\geq \nu^*_{\infty,\infty}\left(\mathsf{V}_{x,R}\right),$$ and from the increasing property of the sequence $(\mathsf{V}_{x,R})_{R\geq 0}$ for every fixed $x\geq 0$, by letting $R$ go to infinity,

$$\liminf_{\lambda \rightarrow +\infty}{\mathbb{P}\left(\mathsf{C}_{\lambda}^2\right)}\geq \nu^*_{\infty,\infty}\left(\mathsf{V}_{x}\right).$$ At this point, let us introduce the complementary $\partial^{\infty}_x$ of the sets of edges $\partial^{(l)}_{x}$ and $\partial^{(r)}_{x}$ on the boundary (i.e. the edges that are at least $x$ edges away from the root on the boundary) and notice that almost surely, the complementary event $(\mathsf{V}_{x})^c$ is exactly the existence of a dual closed crossing between the dual of the root edge and dual edges of the set $\partial^{\infty}_x$ that do not use dual edges of $\partial^{(l)}_{x}\cup\partial^{(r)}_{x}$, using standard properties of dual bond percolation on planar maps. Up to extraction, we can assume that such a path do not uses dual edges of the boundary at all (out of its endpoints).

Following the same idea as in Proposition \ref{C1}, we introduce the decreasing sequence of events $(\mathsf{V}'_x)_{x\geq 0}$, where for every $x\geq 0$, $\mathsf{V}'_x$ is precisely the event that a closed dual path links the dual edge of the root to the set $\partial^{\infty}_{x}$ (without using other dual edges of the boundary than its endpoints). Thus, for every $x\geq 0$,

$$\limsup_{\lambda \rightarrow +\infty}{\mathbb{P}\left(\left(\mathsf{C}_{\lambda}^2\right)^c\right)}\leq \nu^*_{\infty,\infty}\left(\mathsf{V}'_{x}\right).$$ Again, the limiting event

$$\mathsf{V}':=\bigcap_{x\geq 0}{\mathsf{V}'_x}$$ is the event that infinitely many dual paths from the dual of the root edge intersect the boundary (at different edges), without using dual edges of the boundary out of their endpoints. This event implies in particular that the dual closed percolation cluster $\mathcal{C}'$ is infinite, even if we do not allow dual paths to use boundary edges - or equivalently if we suppose that the boundary is totally open, except the root edge which is free. Thus, 

$$\limsup_{\lambda \rightarrow +\infty}{\mathbb{P}\left(\left(\mathsf{C}_{\lambda}^2\right)^c\right)}\leq \nu^*_{\infty,\infty}\left(\mathsf{V}'\right)\leq \Theta^*_{\rm{bond}'}(1-p_{c,\rm{bond}}^*)=0.$$ Indeed, the parameter for dual bond percolation equals $1-p_{c,\rm{bond}}^*$ because we work at criticality, and we can invoke the fact that $p^*_{c,\rm{bond'}}=1-p^*_{c,\rm{bond}}$ and that there is no percolation at the critical point (see Section 3.4.1 in \cite{angel_percolations_2015} for details). One should notice here that this result holds for a ``Free-Black" boundary as proved in \cite{angel_percolations_2015}, but we easily conclude by the standard coupling argument and the above discussion. This ends the proof, using again for the second statement the discussion of Section \ref{SectionStoppedPeeling}.\end{proof}

\subsubsection{Proof of Theorem \ref{TheoremCP}: bond percolation case}\label{SectionProofThmCPBondCase}

We now prove the main result of this section. Recall that $\mathsf{C}^*(\lambda a, \lambda b)$ denotes the crossing event between the two black segments on the boundary in a UIHP-$*$ with the boundary condition of Figure \ref{colouring}. 

First, using again the stopping time $\tilde{T} :=\inf\{t\geq 0 : \lambda^{-1}  B_{\lfloor \lambda^{3/2} t\rfloor} \leq 0\}$, we get the convergence

\begin{equation*}
  \lambda^{-1}B_{\lfloor\lambda^{3/2}\tilde{T}\rfloor} \overset{(d)}{\underset{\lambda \rightarrow +\infty}{\longrightarrow}} \kappa \mathcal{S}_{\tau},
\end{equation*} where $\tau := \inf\{t\geq 0 : \mathcal{S}_t \leq 0\}$. Then, the distribution of the overshoot of the process $\mathcal{S}$ given in Proposition \ref{stableovershoot}, which is absolutely continuous with respect to the Lebesgue measure, implies on the one hand that
\begin{equation}\label{CvgProbaOvershoot}
  \mathbb{P}(\vert  B_{T} \vert > \lfloor \lambda b \rfloor)\underset{\lambda \rightarrow +\infty}{\longrightarrow}P_{\frac{a}{\kappa}}(\kappa\vert\mathcal{S}_{\tau}\vert> b)=\frac{1}{\pi}\arccos\left(\frac{b-a}{a+b}\right),
\end{equation} and on the other hand that the probability of the events $\{\vert  B_{T} \vert=0\}$ and $\{\vert  B_{T} \vert=\lfloor \lambda b \rfloor\}$ vanish when $\lambda$ becomes large. We can thus split up the problem conditionally on the value of $\vert  B_{T} \vert $, namely the crossing probability $\mathbb{P}(\mathsf{C}^{*}(\lambda a, \lambda b))$ has the same limit when $\lambda$ goes to infinity as the quantity 

\begin{align*}
\mathbb{P}(\mathsf{C}^{*}(\lambda a, \lambda b) \mid \vert  B_{T} \vert < \lfloor \lambda b \rfloor)\mathbb{P}(\vert  B_{T} \vert < \lfloor \lambda b \rfloor)+&\mathbb{P}(\mathsf{C}^{*}(\lambda a, \lambda b) \mid \vert  B_{T} \vert > \lfloor \lambda b \rfloor)\mathbb{P}(\vert  B_{T} \vert > \lfloor \lambda b \rfloor).
\end{align*}  The results of Propositions \ref{C1} and \ref{C2} thus yield that

$$\lim_{\lambda \rightarrow +\infty}{\mathbb{P}\left(\mathsf{C}^{\triangle}_{\rm{bond}}(\lambda a, \lambda b)\right)} = \lim_{\lambda \rightarrow +\infty}{\mathbb{P}\left(\mathsf{C}^{\square}_{\rm{bond}}(\lambda a, \lambda b)\right)}= \frac{1}{\pi}\arccos\left(\frac{b-a}{a+b}\right),$$ which is exactly Theorem \ref{TheoremCP} in the bond percolation case on the UIHPT and the UIHPQ. 

\subsection{Crossing probabilities for face percolation}

We now consider the crossing events for face percolation. We will see that although the peeling process is slightly different, the same method as for bond percolation applies in order to compute the scaling limits we are interested in. Recall that two faces are adjacent in this model if they share an edge. We still work conditionally on a ``White-Black-White-Black" boundary condition. Here, this means that the crossing event $\mathsf{C}^*_{\rm{face}}(\lambda a,\lambda b)$ has to occur between the two black segments of the boundary, such that one can imagine faces lying on the other side of the boundary as in Figure \ref{colouringFace}. 

\begin{figure}[h]
\begin{center}
\includegraphics[scale=1.6]{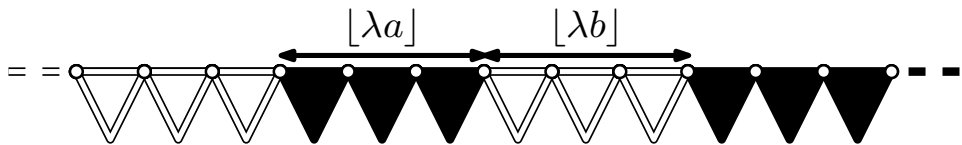}
\end{center}
\caption{The boundary condition for the face percolation problem.}
\label{colouringFace}
\end{figure}
The arguments being very similar to the previous section, we do not give as many details, except for the remarkable differences.

\subsubsection{Peeling process and scaling limit}

The main argument is again to define the appropriate peeling process, which is here simpler because we do not have to deal with a free segment on the boundary. However, a subtlety arising in this context is that the right thing to do is to peel the leftmost black edge instead of the rightmost white. The reason is that this exploration follows the rightmost percolation interface between the open and closed clusters, which is the correct one as far as the crossing event is concerned.

\begin{Alg} Consider a half-plane map which has the law of the UIHP-$*$ with a ``White-Black-White-Black" boundary condition (the outmost segments being infinite).

\vspace{2mm}

Reveal the face incident to the leftmost black edge of the boundary, including its colour.

\vspace{2mm}

After each step, repeat the algorithm on the UIHP-$*$ given by the unique infinite connected component of the current map, deprived of the revealed face. The exposed edges of the initial map form the boundary of the infinite connected component, whose colour is inherited from the adjacent revealed faces.\end{Alg} Note that this algorithm never ends because there is always a leftmost black edge on the boundary, although we are only interested in cases where the finite black segment has positive length.

The properties of this algorithm are the same as before, due to the spatial Markov property. In particular, the random variables $(\mathcal{E}_{n},\mathcal{R}_{r,n},c_n)_{n\geq 0}$, defined exactly as in Section \ref{SectionBondCase}, are i.i.d. and have the law of $\mathcal{E},\mathcal{R}_r$ and $c$ (which has the Bernoulli law of parameter $p^*_{c,\text{face}}$) respectively, where $c_n$ denotes the colour of the face revealed at step $n$ of the process and the other notations are the same as before. We again need the process $(B_n)_{n\geq 0}$, defined as follows. 

We let $B_0=\lfloor \lambda a \rfloor$ and for every $n\geq 0$:

\begin{equation}
  B_{n+1}=B_n+\mathbf{1}_{\{c_n=1\}}\mathcal{E}_n-\mathcal{R}_{r,n}-1.
\end{equation} The process $(B_n)_{n\geq 0}$ is a Markov chain with respect to the canonical filtration of the exploration process. Moreover, if we denote by $T:=\inf\{n \geq 0 : B_n \leq 0 \}$ the first entrance time of $(B_n)_{n\geq 0}$ in $\mathbb{Z}_-$, then for every $0\leq n < T$, $B_n$ is the \textbf{length of the finite black segment} at step $n$ of the exploration process. As long as $n<T$, it still holds that all the (black) faces incident to this segment are part of the open cluster of the initial finite black segment, due to the fact that our algorithm follows the rightmost percolation interface, while for $n\geq T$, the initial finite black segment has been swallowed.

\vspace{3mm}

\textbf{The process $(B_n)_{n\geq 0}$.} The properties of the peeling process yields that $(B_n)_{n \geq 0}$ is a random walk with steps distributed as $X$ defined by

$$X:=\mathbf{1}_{c=1}\mathcal{E}-\mathcal{R}_{r}-1.$$In particular, using the same arguments as before, we get

\begin{align*}
\mathbb{E}(X)&=p^*_{c,\text{face}}\mathbb{E}(\mathcal{E})-\mathbb{E}(\mathcal{R}_r)-1=
\left\lbrace
\begin{array}{ccc}
\frac{5}{3}p^{\triangle}_{c,\text{face}}-\frac{4}{3}  \  (*=\triangle) \\
\\
2p^{\square}_{c,\text{face}}-\frac{3}{2}  \  (*=\square) \\
\end{array}\right..
\end{align*} Thus, we have $\mathbb{E}(X)=0$ in both the UIHPT and the UIHPQ, and this yields the convergence $$\left(\frac{B_{\lfloor \lambda t\rfloor}}{\lambda^{2/3}}\right)_{t \geq 0} \underset{\lambda \rightarrow +\infty}{\overset{(d)}{\longrightarrow}} \kappa(\mathcal{S}_t)_{t \geq 0}$$ in the sense of Skorokhod, where $\mathcal{S}$ is the Lévy spectrally negative $3/2$-stable process started at $a$.

\subsubsection{Stopped peeling process and asymptotics}

Let us focus on the first time when $(B_n)_{n\geq 0}$ reaches $\mathbb{Z}_-$. Again, two cases are likely to happen:

\vspace{3mm}

\textbf{Case 1: $\vert  B_{T} \vert<\lfloor \lambda b \rfloor$.} In this situation, a finite part of the white segment on the boundary is swallowed. If the last revealed face is white, the crossing event cannot occur, while if it is black, the crossing event implies the existence of an open path linking the rightmost edge of the finite black segment and the infinite black segment on the boundary. This event, still denoted by $\mathsf{C}^1_{\lambda}$, is similar to that of the bond case. We again have that $$\lfloor \lambda b \rfloor-\vert B_{T} \vert \overset{\mathbb{P}}{\underset{\lambda \rightarrow + \infty}{\longrightarrow}} +\infty,$$ so that the crossing event $\mathsf{C}^1_{\lambda}$ has probability bounded by the percolation probability in this model and we get

$$\lim_{\lambda \rightarrow + \infty}{\mathbb{P}\left(\mathsf{C}_{\lambda}^1\right)} \leq \Theta^*_{\rm{face}}(p_{c,\rm{face}}^*)=0,$$ using Theorem 6 of \cite{angel_percolations_2015}.

\vspace{3mm}

\textbf{Case 2: $\vert  B_{T} \vert>\lfloor \lambda b \rfloor$.} In this situation, the whole white segment on the boundary is swallowed. We have $$\vert B_{T} \vert-\lfloor \lambda b \rfloor \overset{\mathbb{P}}{\underset{\lambda \rightarrow + \infty}{\longrightarrow}} +\infty \quad \text{and} \quad  B_{T-1} \overset{\mathbb{P}}{\underset{\lambda \rightarrow + \infty}{\longrightarrow}} +\infty,$$ as in the bond percolation case. Whether the last revealed face is open or closed, the crossing event is implied by a crossing event in a Boltzmann map, whose boundary has the colouring of Figure \ref{Crossing2}. We denote by $\mathsf{C}^2_{\lambda}$ this event, and we have that $$d_l^{\lambda} \overset{\mathbb{P}}{\underset{\lambda \rightarrow + \infty}{\longrightarrow}} +\infty\quad \text{and} \quad d_r^{\lambda} \overset{\mathbb{P}}{\underset{\lambda \rightarrow + \infty}{\longrightarrow}} +\infty$$ in every likely case. Then, when $\lambda$ goes to infinity, we have that $(\mathsf{C}^2_{\lambda})^c$ implies the closed dual face percolation event - dual face percolation being face percolation where adjacent faces share a vertex, or equivalently site percolation on the dual map with the same adjacency rules. This is analogous to the star-lattice in the case of $\mathbb{Z}^d$, and we have that $1-p_{c,\rm{face}'}^*=p_{c,\rm{face}}^*$, with no percolation at the critical point, see Section 3.4.2 in \cite{angel_percolations_2015} (with a fully open boundary condition). As a consequence, we get:

$$\lim_{\lambda \rightarrow + \infty}{\mathbb{P}\left(\left(\mathsf{C}_{\lambda}^2\right)^c\right)} \leq \Theta^*_{\rm{face}'}(1-p_{c,\rm{face}}^*)=0.$$ 

The statements we obtained up to this point yield, using the same arguments as in Section \ref{SectionBondCase}, that

$$\lim_{\lambda \rightarrow +\infty}{\mathbb{P}\left(\mathsf{C}^{\triangle}_{\rm{face}}(\lambda a, \lambda b)\right)} = \lim_{\lambda \rightarrow +\infty}{\mathbb{P}\left(\mathsf{C}^{\square}_{\rm{face}}(\lambda a, \lambda b)\right)}= \frac{1}{\pi}\arccos\left(\frac{b-a}{a+b}\right),$$ which is exactly Theorem \ref{TheoremCP} in the case of face percolation on the UIHPT and the UIHPQ.

\subsection{Crossing probabilities for site percolation}

The last case we study is site percolation on the UIHPQ. We work conditionally on the same ``White-Black-White-Black" boundary condition, represented in Figure \ref{colouringSite}.

\begin{figure}[h]
\begin{center}
\includegraphics[scale=1.6]{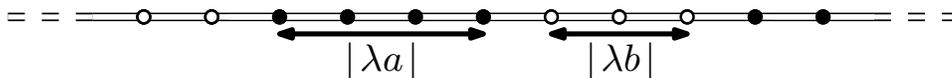}
\end{center}
\caption{The boundary condition for the site percolation problem.}
\label{colouringSite}
\end{figure}

We are still interested in the scaling limit of the probability of the crossing event that the two black segments are part of the same percolation cluster, denoted by $\mathsf{C}^{\square}_{\rm{site}}(\lambda a,\lambda b)$, at the critical point $p_{c,\rm{site}}^{\square}$.

\subsubsection{Peeling process and scaling limit}

Here, the appropriate peeling process follows the same idea as in the bond percolation case, but we use the vertex-peeling process (Algorithm \ref{VertexPeelingProcess}) introduced in Section \ref{SectionPCQ} in order to peel vertices instead of edges. Recall that a ``White-(Free)-Any" boundary condition means that we have an infinite white segment on the left, and then (possibly) a finite free segment, with no specific condition on the right part. Again, the right part will turn out to be coloured ``Black-White-Black" due to the boundary condition we are interested in, but we define the algorithm in the general setting.

\begin{Alg}Consider a half-plane map which has the law of the UIHPQ with a ``White-(Free)-Any" boundary condition.

\begin{itemize}

\item Reveal the colour of the rightmost free vertex, if any:

\begin{itemize}
\item If it is black, repeat the algorithm.

\item If it is white, mark this vertex and execute the vertex-peeling process, without revealing the colour of the vertices.

\end{itemize} 

\item If there is no free vertex on the boundary of the map, as in the first step, mark the rightmost white vertex of the infinite white segment and execute the vertex-peeling process, without revealing the colour of the vertices.
\end{itemize} After each step, repeat the algorithm on the unique infinite connected component of the current map, deprived of the revealed faces. The map we obtain is again implicitly rooted at the next edge we have to peel.

\end{Alg}

As usual, the algorithm is well defined in the sense that the pattern of the boundary (White-(Free)-Any) is preserved.

Following the strategy of Section \ref{SectionBondCase}, we now define the two processes $(F_n)_{n\geq 0}$ and $(B_n)_{n\geq 0}$. Let us first recall some familiar notation. For every $n\geq 0$, we denote by $c_n$ the color of the revealed vertex at step $n$ of the exploration, with the convention that $c_n= 0$ when there is no free vertex on the boundary. When $c_n=0$, the vertex-peeling process is executed and we let $\sigma_n+1$ be the number of steps of this process (so that $\sigma_n\geq 0$). Then, for every $0\leq i \leq \sigma_n$, we denote by $E^{(n)}_i$, respectively $R^{(l,n)}_i$, $R^{(r,n)}_i$ the number of exposed edges, respectively swallowed edges on the left and right of the root edge at step $i$ of the vertex-peeling process (in particular, $R^{(r,n)}_i=0$ for $i<\sigma_n$). All of those variables are set to zero when $c_n=1$ by convention.

Thanks to the spatial Markov property and the definition of the vertex-peeling process, if we restrict ourselves to the consecutive stopping times when $c_n=0$, the random variables $(\sigma_n)_{n \geq 0}$ are i.i.d., with geometric law of parameter $\mathbb{P}(\mathcal{R}_r=0)$: they have the same law as $\sigma$, defined for every $k\geq 0$ by $\mathbb{P}(\sigma=k)=(\mathbb{P}(\mathcal{R}_r=0))^k(1-\mathbb{P}(\mathcal{R}_r=0))$.

Conditionally on $(\sigma_n)_{n\geq 0}$, firstly the random variables $(E^{(n)}_i)_{n\geq 0,0\leq i\leq \sigma_n-1}$ and $(R^{(l,n)}_i)_{n\geq 0,0\leq i\leq \sigma_n-1}$ are i.i.d. and have the law of $\mathcal{E}$ and $\mathcal{R}_l$ conditionally on $\mathcal{R}_r=0$ respectively, and secondly, $(E^{(n)}_{\sigma_n})_{n\geq 0}$, $(R^{(l,n)}_{\sigma_n})_{n\geq 0}$ and $(R^{(r,n)}_{\sigma_n})_{n\geq 0}$ are i.i.d. and have the law of $\mathcal{E}$,$\mathcal{R}_l$ and $\mathcal{R}_r$ conditionally on $\mathcal{R}_r>0$ respectively. Finally, all those variables independent for different values of $n$. 

In what follows, we let $(\tilde{E}_i)_{i\geq 0}$, $(\tilde{R}_{l,i})_{i\geq 0}$, $\bar{E}$, $\bar{R}_{l}$ and $\bar{R}_{r}$  be independent random variables having the law of $\mathcal{E}$, $\mathcal{R}_l$ and $\mathcal{R}_r$ conditionally on $\mathcal{R}_r=0$ and $\mathcal{R}_r>0$ respectively. We also define $c$ that has the Bernoulli law of parameter $p_{c,\text{site}}^{\square}$. We can now define the processes $(F_n)_{n\geq 0}$ and $(B_n)_{n\geq 0}$.

\vspace{3mm}

First, for every $n\geq 0$, we let $F_n$ be the \textbf{length of the free segment} on the boundary at step $n$ of the peeling process. 

In this setting, this quantity is harder to study because of the vertex-peeling process that we (possibly) execute. Let us provide an alternative description of the process $(F_n)_{n\geq 0}$. We first have $F_0=0$. Then, let $F_0^{(0)}=0$ and for every $n\geq 0$, set inductively $F^{(n)}_0:=(F_n-1)_+$. We let for every $n\geq 0$ and $0\leq i \leq \sigma_n$,

\begin{equation*}
F^{(n)}_{i+1}:=
\left\lbrace
\begin{array}{ccc}
F^{(n)}_{i}+E^{(n)}_i-R^{(l,n)}_i-1 & \mbox{if} & F^{(n)}_{i}-R^{(l,n)}_i-1\geq 0 \\
E^{(n)}_i-1 & \mbox{otherwise} &\\
\end{array}\right..
\end{equation*} On the event $\{c_n=0\}$, this quantity describes the evolution of the length of the finite free segment during the execution of the vertex-peeling process. Then, we have for every $n\geq 0$,

\begin{equation}
F_{n+1}=\left\lbrace
\begin{array}{ccc}
F_n-1 & \mbox{if} & c_n=1\\
F^{(n)}_{\sigma_n+1} & \mbox{if} & c_n=0\\
\end{array}\right..
\end{equation} The process $(F_n)_{n\geq 0}$ is a Markov chain with respect to the canonical filtration of the exploration process, and $c_n= 0$ when  $F_n=0$.

\begin{Rk}Contrary to the bond percolation case, the process $(F_n)_{n\geq 0}$ can reach zero even when $c_n=0$, and can also take value zero at consecutive times with positive probability.
\end{Rk}

Then, we also let $B_0=\lfloor \lambda a \rfloor$ and for every $n\geq 0$:

\begin{equation}
B_{n+1}=\left\lbrace
\begin{array}{ccc}
B_n+1 & \mbox{if} & c_n=1\\
B_n+1-R^{(r,n)}_{\sigma_n} & \mbox{if} & c_n=0\\
\end{array}\right..
\end{equation} The process $(B_n)_{n\geq 0}$ is a Markov chain with respect to the canonical filtration of the exploration process. Moreover, if we denote by $T:=\inf\{n \geq 0 : B_n \leq 0 \}$ the first entrance time of $(B_n)_{n\geq 0}$ in $\mathbb{Z}_-$, then for every $0\leq n < T$, $B_n$ is the \textbf{length of the finite black segment} at step $n$ of the exploration process. Again, while $n<T$, the vertices of this segment are part of the open cluster of the initial black segment, while this initial segment has been swallowed for $n\geq T$.

\vspace{3mm}

\textbf{The process $(F_n)_{n\geq 0}$.} Exactly as in Section \ref{SectionBondCase}, the process $(F_n)_{n\geq 0}$ is not a random walk, but has the same behaviour when it is far away from zero. More precisely, let us define $\hat{\sigma}_+:=\inf\{n \geq 0 : F_n>0\}$ and $\hat{\sigma}_-:=\inf\lbrace n \geq \hat{\sigma}_+ : \exists \ 0\leq i\leq \sigma_n : F^{(n)}_{i}-R^{(l,n)}_i-1 < 0 \rbrace$, which is the first time after $\hat{\sigma}_+$ the finite free segment is swallowed (possibly during the vertex-peeling process). Note that $F_{\hat{\sigma}_-}$ is not zero in general, but if $F_n=0$ for $n\geq \hat{\sigma}_+$, then $\hat{\sigma}_-\leq n$. By construction, as long as $\hat{\sigma}_+\leq n< \hat{\sigma}_-$, we have

$$F_{n+1}=F_n-1+\mathbf{1}_{\{c_n=0\}}\left(\sum_{i=0}^{\sigma_n} \left(E_i^{(n)}-R_i^{(l,n)}-1\right)\right),$$ and $(F_{\hat{\sigma}_++n}-F_{\hat{\sigma}_+})_{0\leq n \leq \hat{\sigma}_--\hat{\sigma}_+}$ is a killed random walk with steps distributed as the random variable $\hat{X}$ defined by

$$\hat{X}:=\mathbf{1}_{\{c=0\}}\left(\sum_{i=0}^{\sigma-1} (\tilde{E}_i-\tilde{R}_{l,i}-1)+\bar{E}-\bar{R}_{l}-1\right)-1.$$ We get from the definitions that

\begin{align*}
\mathbb{E}(\hat{X})&=(1-p_{c,\rm{site}}^{\square})[\mathbb{E}(\sigma)\mathbb{E}(\tilde{E}_0-\tilde{R}_{l,0}-1)+\mathbb{E}(\bar{E}-\bar{R}_{l}-1)]-1\\
&=\frac{4}{9}\left(\frac{7}{2}\left[\mathbb{E}(\tilde{E}_0)-\mathbb{E}(\tilde{R}_{l,0})-1\right]+9-\frac{7}{2}\mathbb{E}(\tilde{E}_0)-\frac{9}{4}+\frac{7}{2}\mathbb{E}(\tilde{R}_{l,0})-1\right)-1=0,
\end{align*} using computations similar to that of Proposition \ref{pcquad} and the associated remark. This implies in particular that $\hat{\sigma}_-$ is almost surely finite (since it is bounded from above by the first return time of the process to zero). It is again easy to check that the random variable $\hat{X}$  satisfies the assumptions of Proposition \ref{stablerw}, so that if we denote by $(\hat{S}_n)_{n \geq 0}$ a random walk with steps distributed as $\hat{X}$, we have

\begin{equation}\label{CvgSkoShatSite}
\left(\frac{\hat{S}_{\lfloor \lambda t\rfloor}}{\lambda^{2/3}}\right)_{t \geq 0} \underset{\lambda \rightarrow +\infty}{\overset{(d)}{\longrightarrow}} \kappa(\mathcal{S}_t)_{t \geq 0},
\end{equation} in the sense of convergence in law for Skorokhod's topology, where $\mathcal{S}$ is the Lévy $3/2$-stable process.

\vspace{3mm}

\textbf{The process $(B_n)_{n\geq 0}$.} As before, $(B_n)_{n\geq 0}$ is not a random walk but behaves the same way when $F_n$ is not equal to zero. More precisely, let $\sigma^{(0)}_0=0$, $\sigma^{(1)}_0:=\inf\{n\geq 0 : F_n>0 \}$ and recursively for every $k\geq 1$,

$$\sigma^{(0)}_k:=\inf\{n\geq \sigma^{(1)}_{k-1} : F_n=0 \}\quad \text{ and } \quad \sigma^{(1)}_k:=\inf\{n\geq \sigma^{(0)}_k : F_n>0 \}.$$ From the fact that the vertex-peeling process exposes a positive number of vertices with positive probability, we know that $\sigma^{(1)}_0$ is almost surely finite. Moreover, using the above description of the process $(F_n)_{n\geq 0}$, it holds that started from any initial length, the free segment is swallowed in time $\hat{\sigma}_-$ which is also finite. This possibly happens during the vertex-peeling process, but in this case, with probability bounded away from zero, the vertex-peeling process ends at the next step with no exposed vertices and $F_{\hat{\sigma}_-}=0$. From there, we get that $\sigma^{(0)}_1$, and thus all the stopping times $(\sigma^{(0)}_k)_{k\geq 0}$ and $(\sigma^{(1)}_k)_{k\geq 0}$ are almost surely finite.

Using the strong Markov property and the definition of $(B_n)_{n\geq 0}$, for every $k\geq 0$, the process $(B_{\sigma^{(1)}_k+n}-B_{\sigma^{(1)}_k})_{0 \leq n \leq \sigma^{(0)}_{k+1}-\sigma^{(1)}_k}$ is a killed random walk with steps distributed as the random variable $X$ defined by 

$$X:=\mathbf{1}_{\{c=1\}}-\mathbf{1}_{\{c=0\}}(\bar{R}_r-1).$$In particular, we have $\mathbb{E}(X)=p_{c,\rm{site}}^{\square}-(1-p_{c,\rm{site}}^{\square})(\mathbb{E}(\mathcal{R}_r\mid \mathcal{R}_r>0)-1)=0$ and properties of the random variable $X$ yields that if $(S_n)_{n \geq 0}$ is a random walk with steps distributed as $X$, then

\begin{equation}\label{CvgSkoSSite}
\left(\frac{S_{\lfloor \lambda t\rfloor}}{\lambda^{2/3}}\right)_{t \geq 0} \underset{\lambda \rightarrow +\infty}{\overset{(d)}{\longrightarrow}} \kappa(\mathcal{S}_t)_{t \geq 0}\end{equation} in the sense of Skorokhod, where $\mathcal{S}$ is the Lévy $3/2$-stable process.

\vspace{3mm}

At this point, we are in the exact situation of Section \ref{SectionBondCase}, and the point is to get the following analogue of Proposition \ref{cvgscallim}.

\begin{Prop}\label{cvgscallimSite} We have, in the sense of convergence in law for Skorokhod's topology

$$\left(\frac{B_{\lfloor \lambda^{3/2} t\rfloor}}{\lambda}\right)_{t \geq 0} \underset{\lambda \rightarrow +\infty}{\overset{(d)}{\longrightarrow}}\kappa(\mathcal{S}_t)_{t \geq 0},$$ where $\mathcal{S}$ is the Lévy spectrally negative $3/2$-stable process started at $a$.
\end{Prop}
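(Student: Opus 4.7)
The plan is to adapt the strategy of Proposition \ref{cvgscallim}. I will couple $(B_n)_{n\geq 0}$ with a genuine random walk $(S_n)_{n\geq 0}$ whose steps are distributed as $X$, show that the discrepancy $|B_n - S_n|$ is negligible at scale $\lambda$, and then invoke the scaling convergence (\ref{CvgSkoSSite}). Concretely, set $S_0 = B_0 = \lfloor \lambda a \rfloor$ and enlarge the probability space with an independent sequence $(\beta_n, r_n)_{n\geq 0}$ of mutually independent variables, where $\beta_n$ is Bernoulli$(p_{c,\rm{site}}^{\square})$ and $r_n$ has the law of $\bar{R}_r$. When $F_n > 0$, set $S_{n+1} - S_n := B_{n+1} - B_n$; when $F_n = 0$ (in which case $c_n = 0$ by construction), set $S_{n+1} - S_n := \boldsymbol{1}_{\{\beta_n=1\}} + \boldsymbol{1}_{\{\beta_n=0\}}(1 - r_n)$, independently of the increment of $B$. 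The spatial Markov property and the independence of the auxiliary variables guarantee that $(S_n)_{n\geq 0}$ is a random walk with steps distributed as $X$. Since $S_{n+1} - S_n \leq 1$ while $B_{n+1} - B_n = 1 - R^{(r,n)}_{\sigma_n}$ may take large negative values when $F_n = 0$, the coupling yields $S_n - R_n \leq B_n \leq S_n$ with $R_n := \sum_{k \in \Xi_n}(1 - (B_{k+1} - B_k))$ and $\Xi_n := \{0 \leq k < n : F_k = 0\}$.

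The next step is to prove $R_n = o(n^{2/3})$ in probability. Writing $\xi_n := |\Xi_n|$, the strong Markov property gives that, conditionally on $\xi_n$, $R_n$ is distributed as $\sum_{k=1}^{\xi_n}(1 + \bar{r}_k)$ for i.i.d. copies $\bar{r}_k$ of $\bar{R}_r$. Since $\mathbb{E}(\bar{R}_r) = 9/4 < \infty$, the law of large numbers reduces the problem to showing $\xi_n = o(n^{1/3+\alpha})$ in probability for every $\alpha > 0$. To obtain this, I would decompose the trajectory of $(F_n)$ along the stopping times $(\sigma_k^{(0)}, \sigma_k^{(1)})_{k \geq 0}$ introduced above. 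Starting from $F_{\sigma_k^{(1)}} \geq 1$, a standard coupling (consisting in not taking the positive part in the recursion defining $F$) bounds the process from below by a random walk with steps distributed as $\hat{X}$, up to its first entrance in $\mathbb{Z}_-$. Hence $\sigma_{k+1}^{(0)} - \sigma_k^{(1)}$ stochastically dominates the hitting time $\bar{\sigma} := \inf\{n \geq 0 : \hat{S}_n + 1 \leq 0\}$.

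From (\ref{CvgSkoShatSite}) and the value $\rho = 2/3$ of the positivity parameter of the stable process, we deduce $\mathbb{P}(\hat{S}_n > 0) \to 2/3$, whence Rogozin's theorem applied to $(-\hat{S}_n)_{n\geq 0}$ yields $\mathbb{P}(\bar{\sigma} \geq n) \sim n^{-1/3} L_+(n)$ for a slowly varying function $L_+$, exactly as in Lemma \ref{cvgprobaxi}. The consecutive visits of $F$ to zero (which, unlike in the bond case, can exceed length one) have geometric tails, because at each return to zero the vertex-peeling process exposes at least one edge with probability bounded away from zero, triggering $F$ to escape in one step. Combining both bounds gives $\mathbb{P}(\xi_n > \varepsilon n^{1/3+\alpha}) \to 0$ for every $\alpha, \varepsilon > 0$, hence $\lambda^{-1}(B_{\lfloor \lambda^{3/2} t\rfloor} - S_{\lfloor \lambda^{3/2} t\rfloor}) \to 0$ in probability uniformly on compacts, and the conclusion follows from (\ref{CvgSkoSSite}) after identifying the starting point.

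The main obstacle compared to the bond case is the increased intricacy of $(F_n)$: it may sit at zero for several consecutive steps, and it may exit zero with a large size-biased jump coming from the terminal step of a vertex-peeling sub-routine. Verifying that these additional sources of randomness do not destroy the $o(n^{1/3+\alpha})$ bound on $\xi_n$, and that the stochastic domination of excursion lengths by hitting times of $\hat{S}$ indeed holds despite the fact that $F$ is not a random walk even away from zero, is the main technical point. Once this is in place, the rest of the argument is entirely parallel to the bond percolation case.
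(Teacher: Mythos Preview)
Your overall strategy is exactly the one the paper uses: couple $(B_n)$ to a random walk $(S_n)$ with step law $X$, control the discrepancy via the number $\xi_n$ of visits of $(F_n)$ to zero, bound $\xi_n$ by first controlling the number $\chi_n$ of excursions via Rogozin's tail estimate for the $\hat{X}$-walk, and then absorb the geometric sojourns at zero. The decomposition along $(\sigma^{(0)}_k,\sigma^{(1)}_k)$ and the use of (\ref{CvgSkoShatSite}) are precisely what the paper does.

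There is, however, a genuine slip in your coupling. By introducing an \emph{independent} variable $r_n$ with law $\bar{R}_r$ and setting $S_{n+1}-S_n=\mathbf{1}_{\{\beta_n=1\}}+\mathbf{1}_{\{\beta_n=0\}}(1-r_n)$ when $F_n=0$, you lose the pathwise inequality $B_n\leq S_n$. Indeed, on $\{F_n=0,\beta_n=0\}$ the $S$-increment is $1-r_n$ while the $B$-increment is $1-R^{(r,n)}_{\sigma_n}$; since $r_n$ and $R^{(r,n)}_{\sigma_n}$ are independent and both unbounded, neither dominates the other, so the claimed sandwich $S_n-R_n\leq B_n\leq S_n$ does not follow. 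Your justification (``$S_{n+1}-S_n\leq 1$ while $B_{n+1}-B_n$ may be very negative'') is not sufficient, because $S_{n+1}-S_n$ can also be very negative under your construction.

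The fix is minor and is what the paper does: do not introduce $r_n$, but \emph{reuse} the variable $R^{(r,n)}_{\sigma_n}$ already present in the $B$-increment. That is, when $F_n=0$ set
\[
S_{n+1}-S_n:=\mathbf{1}_{\{\beta_n=1\}}+\mathbf{1}_{\{\beta_n=0\}}(B_{n+1}-B_n)=\mathbf{1}_{\{\beta_n=1\}}-\mathbf{1}_{\{\beta_n=0\}}\bigl(R^{(r,n)}_{\sigma_n}-1\bigr).
\]
Then on $\{\beta_n=0\}$ the two increments coincide, while on $\{\beta_n=1\}$ the $S$-increment equals $1\geq 1-R^{(r,n)}_{\sigma_n}$; this gives $B_n\leq S_n$ pathwise, and $S_n-B_n=\sum_{k\in\Xi_n,\,\beta_k=1}R^{(r,k)}_{\sigma_k}\leq R_n$. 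With this correction the rest of your argument goes through unchanged.
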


The proof of this result is analogous to that of Proposition \ref{cvgscallim}, but some notable differences exist as we now explain. Let again $S_0=B_0$ and introduce a sequence $(\beta_n)_{n \geq 0}$ of i.i.d. random variables with Bernoulli law of parameter $p^{\square}_{c,\text{site}}$. For every $n\geq 0$, we set

\begin{align*}
S_{n+1}
=S_n+\left\lbrace
\begin{array}{ccc}
B_{n+1}-B_n=\mathbf{1}_{\{c_n=1\}}-\mathbf{1}_{\{c_n=0\}}(R^{(r,n)}_{\sigma_n}-1) &\mbox{ when }  F_n>0 \\
\\
\beta_n+(1-\beta_n)(B_{n+1}-B_n)=\mathbf{1}_{\{\beta_n=1\}}-\mathbf{1}_{\{\beta_n=0\}}(R^{(r,n)}_{\sigma_n}-1)  &\mbox{ when }  F_n=0 \\
\end{array}\right..
\end{align*} 
Let also for every $n\geq 0$, $\Xi_n:=\{0\leq k < n : F_n=0\}$ and $\xi_n:=\#\Xi_n$. Using arguments similar to that of Section \ref{SectionBondCase}, we see that $(S_n)_{n\geq 0}$ is a random walk started from $B_0$ with steps distributed as the random variable $X$ we previously introduced, and that almost surely for every $n\geq 0$,

$$ S_n -R_n \leq B_n \leq S_n,$$ where $R_n:=\sum_{k\in \Xi_n}{(1-(B_{k+1}-B_k))}$. The only thing that remains to check is that for every $\alpha>0$,

\begin{equation}\label{cvgxisite}
\frac{\xi_n}{n^{1/3+\alpha}}\overset{\mathbb{P}}{\underset{n \rightarrow + \infty}{\longrightarrow}} 0.
\end{equation}

We will then use the arguments of the proof of Proposition \ref{cvgscallim} to conclude that $(R_n)_{n\geq 0}$ is small with respect to $(S_n)_{n\geq 0}$ and does not affect the scaling limit of the process $(B_n)_{n\geq 0}$. 

Here, the argument stands apart from the bond percolation case because the process $(F_n)_{n\geq 0}$ is slightly different from that of Section \ref{SectionBondCase} in the sense that it can stay at value zero for consecutive steps with positive probability (see the previous remark). However, if we introduce for $n\geq 0$ the quantity $\chi_n:=\#\{k\geq 0 : \sigma^{(0)}_k\leq n \}$, then the convergence (\ref{CvgSkoShatSite}) yield with the same proof as in Lemma \ref{cvgprobaxi} that

\begin{equation*}
\frac{\chi_n}{n^{1/3+\alpha}}\overset{\mathbb{P}}{\underset{n \rightarrow + \infty}{\longrightarrow}} 0.
\end{equation*} Indeed, for every $k\geq 0$, the random interval $\sigma^{(0)}_{k+1}-\sigma^{(1)}_k$ is still stochastically dominating the entrance time into $\mathbb{Z}_-$ of a random walk started at $1$, with steps distributed as $\hat{X}$. 

Moreover, thanks to the strong Markov property, the random variables $(\sigma^{(1)}_k-\sigma^{(0)}_k)_{k\geq 0}$ are i.i.d. with geometric law of parameter $\theta\in (0,1)$, where $\theta$ is the probability that the vertex-peeling process expose a positive number of vertices (for instance bounded from below by $\mathbb{P}(\mathcal{E}>1 \mid \mathcal{R}_r>0)>0$).  Then, for every $n\geq 0$, we have by construction

$$\xi_n\leq \sum_{k=0}^{\chi_n-1}{(\sigma^{(1)}_k-\sigma^{(0)}_k)},$$ and we get the expected convergence (\ref{cvgxisite}) using the very same identity as in (\ref{decomposition}), combined with the law of large numbers. This concludes the proof of Proposition \ref{cvgscallimSite}.

\subsubsection{Stopped peeling process and asymptotics}

Let us now focus on the situation when $(B_n)_{n\geq 0}$ reaches $\mathbb{Z}_-$. Two cases are likely to happen.

\vspace{3mm}

\textbf{Case 1: $\vert  B_{T} \vert<\lfloor \lambda b \rfloor$.} In that situation, a finite part of the white segment on the boundary is swallowed, see Figure \ref{CaseQ1}.

It is easy to see that for any possible configuration of the last revealed face, $\vert  B_T \vert<\lfloor \lambda b \rfloor$ implies that the percolation cluster of the finite black segment is confined in a finite region of the space, disconnected from the infinite one. Thus, the crossing event does not occur: $\mathbb{P}(\mathsf{C}^{\square}(\lambda a, \lambda b) \mid \vert B_{T} \vert < \lfloor \lambda b \rfloor)=0$.

\begin{figure}[h]
\begin{center}
\includegraphics[scale=1.6]{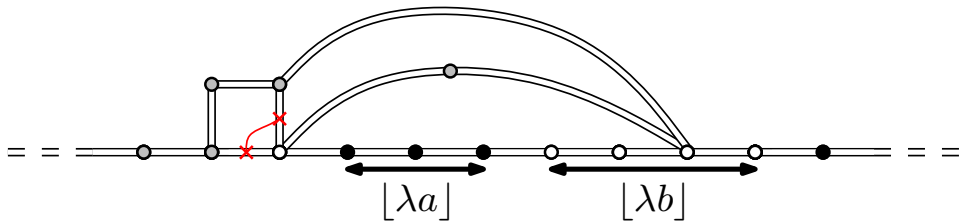}
\end{center}
\caption{A possible configuration when $\vert  B_T \vert<\lfloor \lambda b \rfloor$.}
\label{CaseQ1}
\end{figure}

\vspace{3mm}

\textbf{Case 2: $\vert  B_T \vert>\lfloor \lambda b \rfloor$.} In this situation, the whole white segment on the boundary is swallowed, see Figure \ref{CaseQ2}.

\begin{figure}[h]
\begin{center}
\includegraphics[scale=1.6]{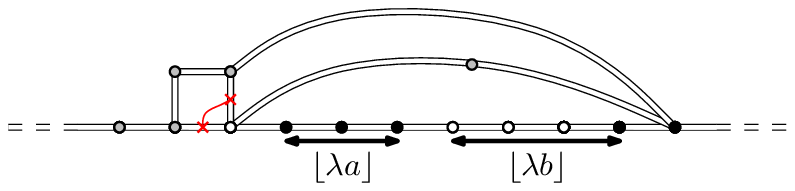}
\end{center}
\caption{The situation when $\vert B_T\vert>\lfloor \lambda b \rfloor$.}
\label{CaseQ2}
\end{figure}

First, we can treat the cases where four vertices of the last revealed face are lying on the boundary at the right of the origin vertex (the equivalent of the event $\mathsf{A}^{\square}_4$ of Section \ref{SectionBondCase}) exactly as before, and we get that the crossing event $\mathsf{C}^{\square}(\lambda a, \lambda b)$ is implied by the crossing event between the open segments on the boundary in a Boltzmann map with the boundary condition of Figure \ref{Crossingsite}. Again, the segment on the boundary may have size $1$ or $2$, but we suppose that it is made of a single white vertex which is the origin of the map $v^*$ for the sake of clarity (the extension being straightforward). We still denote by $\mathsf{C}^2_{\lambda}$ this crossing event. Recall that this map has law $\mu_{v(\lambda)}^{\square}$, where $v(\lambda)$ is the number of vertices on its boundary. 

For every $x\geq 0$, let $\partial^{(l)}_{x}$ and $\partial^{(r)}_{x}$ be the sets of $x$ vertices of the boundary of the map located immediately on the left and on the right of $v^*$. Finally, we can prove the same way as for bond percolation that the lengths $d^{\lambda}_l, d^{\lambda}_r$ of the finite black segments satisfy 
\begin{equation}\label{cvgProbadLambda}
d^{\lambda}:=\min(d^{\lambda}_l, d^{\lambda}_r) \overset{\mathbb{P}}{\underset{\lambda \rightarrow + \infty}{\longrightarrow}} +\infty.
\end{equation} 

\begin{figure}[h]
\begin{center}
\includegraphics[scale=1]{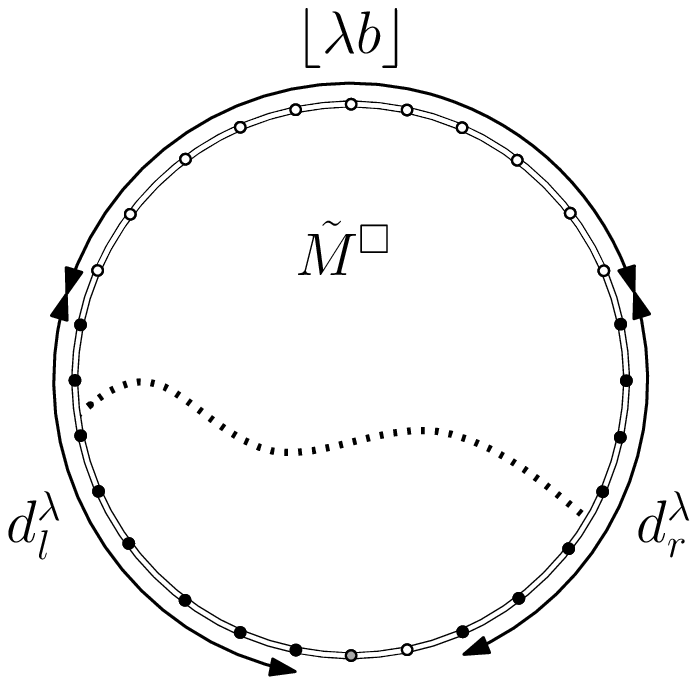}
\end{center}
\caption{The event $\mathsf{C}^2_{\lambda}$.}
\label{Crossingsite}
\end{figure}

The main problem is now to prove that the event $\mathsf{C}^2_{\lambda}$ occurs almost surely asymptotically. The proof is here very different from the bond percolation case, because we have no information on the dual percolation model. (More precisely, the analogue of the dual graph for site percolation should be the so-called \textit{matching graph}, which is in general non-planar.)

\begin{Prop}
\label{C2b}
Conditionally on $\vert  B_T \vert>\lfloor \lambda b \rfloor$, we have

$$\mathbb{P}\left(\mathsf{C}_{\lambda}^2\right)\underset{\lambda \rightarrow + \infty}{\longrightarrow} 1.$$As a consequence, $\mathbb{P}(\mathsf{C}^{*}(\lambda a, \lambda b) \mid \vert  B_{T} \vert > \lfloor \lambda b \rfloor)\underset{\lambda \rightarrow + \infty}{\longrightarrow}1$.\end{Prop}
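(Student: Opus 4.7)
The plan is to mimic the two-step structure of Proposition \ref{C2} (local-convergence reduction $+$ planar/percolation argument), replacing the dual-bond-percolation step by a direct exploration argument, since site percolation on the UIHPQ admits no planar dual.

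First, using $\mu^\square_{v(\lambda)} \Rightarrow \nu^\square_{\infty,\infty}$ together with $d^\lambda\overset{\mathbb{P}}{\to}+\infty$ from (\ref{cvgProbadLambda}), I would introduce, exactly as in the proof of Proposition \ref{C2}, the local event $\mathsf{V}_{x,R}$ that $\partial^{(l)}_x$ is connected to $\partial^{(r)}_x$ by an open path in the ball of radius $R$ around $v^*$. The inequality $\mathbb{P}(\mathsf{C}_\lambda^2)\geq \mathbb{P}(\mathsf{V}_{x,R})-\mathbb{P}(d^\lambda\leq x)$ and the convergence $\mathbb{P}(\mathsf{V}_{x,R})\to\nu^\square_{\infty,\infty}(\mathsf{V}_{x,R})$, followed by $R,x\to+\infty$, reduce the proposition to showing that in the UIHPQ whose boundary is black except at $v^*$, the two infinite black half-boundaries are almost surely connected by an interior open path. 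A planar topological argument in the upper half-plane embedding then shows that the failure of this crossing forces the existence of a connected closed separator rooted at $v^*$ (the unique closed boundary vertex) and extending to infinity in $\mathbb{H}$; hence the complementary event is contained in $\{|\mathcal{C}'(v^*)|=+\infty\}$, where $\mathcal{C}'(v^*)$ denotes the closed cluster of $v^*$.

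It then remains to show that at criticality $p_{c,\rm{site}}^\square=5/9$ one has $|\mathcal{C}'(v^*)|<+\infty$ almost surely under the Black-White-Black boundary condition. For this I would adapt the peeling machinery of Algorithms \ref{VertexPeelingProcess}--\ref{PeelingPr}, exploring from the unique white vertex $v^*$: vertex-peel $v^*$ to expose a Black-Free-Black boundary, then iteratively reveal the rightmost free vertex, vertex-peeling each newly discovered white one (thereby growing the exploration of $\mathcal{C}'(v^*)$) and leaving each black one in place. Letting $\widetilde B_n$ be the analog of the length of the relevant segment in Proposition \ref{behaviourB}, the spatial Markov property and Proposition \ref{PropACExp} give that $\widetilde B_n$ behaves as a random walk killed at $0$ whose step has mean
\[
1 - p_{c,\rm{site}}^\square\,\mathbb{E}(\mathcal{R}_r\mid \mathcal{R}_r>0) \;=\; 1 - \tfrac{5}{9}\cdot\tfrac{9}{4} \;=\; -\tfrac{1}{4}\;<\;0,
\]
the sign being opposite to that in the proof of Proposition \ref{pcquad} because the roles of the two colours are swapped. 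This \emph{strictly negative} drift forces $\widetilde B_n$ to hit $0$ in finite time almost surely, and an analog of Lemma \ref{PercoEvent} translates this absorption into the finiteness of $\mathcal{C}'(v^*)$.

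The main obstacle is the planar reduction to $\{|\mathcal{C}'(v^*)|=+\infty\}$: since site percolation admits no planar dual and the matching graph of the UIHPQ is non-planar, one must take care that the finiteness of the standard closed cluster of $v^*$ really suffices to force the existence of a crossing open path, which is a subtle planar/topological argument in the upper half-plane embedding. Once this and the peeling step are in place, combining the three steps yields $\mathbb{P}(\mathsf{C}_\lambda^2)\to 1$, hence $\mathbb{P}(\mathsf{C}^\square_{\rm{site}}(\lambda a,\lambda b)\mid |B_T|>\lfloor\lambda b\rfloor)\to 1$ by Section \ref{SectionStoppedPeeling}, and the proposition follows.
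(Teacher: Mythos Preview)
Your local-convergence reduction (step 1) matches the paper exactly, and your negative-drift computation $1-\tfrac{5}{9}\cdot\tfrac{9}{4}=-\tfrac{1}{4}$ for the colour-swapped exploration is correct and is essentially what the paper uses to show each exploration phase terminates. The genuine gap is your step 2, and you have in fact put your finger on it yourself. On a quadrangulation the planar separation argument you invoke is \emph{false} as stated: the absence of an open left--right crossing only forces a closed separator in the \emph{matching} graph (where diagonals of every quadrangle are added), not in the quadrangulation itself. A single quadrangle with white diagonal vertices and black diagonal vertices already shows that a white ``barrier'' need not be edge-connected. Hence finiteness of the ordinary closed cluster $\mathcal{C}'(v^*)$ does \emph{not} imply the existence of a crossing, and your peeling argument in step 3, while correct, proves the wrong statement. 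Proving finiteness of the matching-graph closed cluster would require a different exploration, and it is not clear the peeling machinery applies there.

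The paper bypasses this obstruction entirely by avoiding any duality or separation argument. After the same reduction to the UIHPQ with a Black--White--Black boundary, it first couples to a Free--White--Black boundary, and then runs the colour-reversed version of Algorithm~\ref{PeelingPr} (vertex-peeling the \emph{black} vertices discovered). Subcriticality of the white colour ($1-p_{c,\rm{site}}^\square<p_{c,\rm{site}}^\square$) guarantees the white segment is swallowed in finite time. At that moment one checks directly on the last revealed quadrangle whether an explicit black crossing has been created; this happens with probability at least $p_{c,\rm{site}}^\square>0$ (the middle free vertex is black). If not, one is back in a Free--White--Black configuration with a new, almost surely finite, white segment, and one iterates. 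Independence of successive phases and the uniform lower bound on the crossing probability then give $\nu_{\infty,\infty}^\square(\mathsf{V})=1$ by a geometric-trials argument. This ``repeat until success'' construction is the missing idea: it produces the crossing directly rather than inferring it from the non-existence of a closed barrier.
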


\begin{proof}Recall the definition of the sequence of events $(\mathsf{V}_x)_{x\geq 0}$, where for every $x\geq 0$, $\mathsf{V}_x$ is the event that an open path links a vertex of $\partial^{(l)}_{x}$ to a vertex of $\partial^{(r)}_{x}$, without using vertices of the boundary of the map. Then, we have

$$\mathbb{P}\left(\mathsf{C}_{\lambda}^2\right)\geq \mathbb{P}\left(\mathsf{C}_{\lambda}^2,d^{\lambda}>x\right) \geq \mathbb{P}\left(\mathsf{V}_x,d^{\lambda}>x\right)\geq\mathbb{P}\left(\mathsf{V}_x\right)-\mathbb{P}\left(d^{\lambda}\leq x\right).$$ Again, the quantity $\mathbb{P}\left(\mathsf{V}_x\right)$ still depends on $\lambda$ through the law of the map we consider, whose boundary has size $v(\lambda)$. We use the same restriction of the events $(\mathsf{V}_x)_{x\geq 0}$ to the ball of radius $R$ of the map as in Proposition \ref{C2}, and with the same arguments, including the convergence (\ref{cvgProbadLambda}), we get
$$\liminf_{\lambda \rightarrow +\infty}{\mathbb{P}\left(\mathsf{C}_{\lambda}^2\right)}\geq \nu^*_{\infty,\infty}\left(\mathsf{V}_{x}\right).$$ Since the sequence of events $(\mathsf{V}_x)_{x\geq 0}$ is increasing, 

$$\liminf_{\lambda \rightarrow +\infty}{\mathbb{P}\left(\mathsf{C}_{\lambda}^2\right)}\geq \nu^*_{\infty,\infty}\left(\mathsf{V}\right).$$ Here, the limiting event

$$\mathsf{V}:=\bigcup_{x\geq 0}{\mathsf{V}_x}$$ is the event that there exists a crossing between the left and right boundary in the UIHPQ with a closed origin vertex. With the current definitions, this crossing cannot use vertices of the boundary. However, if it was the case, we could extract from the crossing a subpath which do not intersect the boundary. In other words, we can focus on the crossing event between the two infinite black segments in a UIHPQ with a totally open boundary condition, excluding the origin which is closed.

It is easy to see, using the standard coupling argument, that the event $\mathsf{V}$ has probability bounded from below by the probability of the very same event with an infinite free segment on the left, as in Figure \ref{colouringCinf2}.

\begin{figure}[h]
\begin{center}
\includegraphics[scale=1.6]{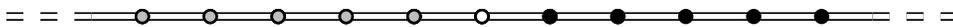}
\caption{The colouring of the boundary for the coupling argument.}
\label{colouringCinf2}
\end{center}
\end{figure}

The idea is now to use the same peeling process as in Section \ref{SectionPCQ}, \textit{with reversed colours}. It is defined as follows.

\vspace{3mm}

Reveal the colour of the rightmost free vertex on the boundary.
\begin{itemize}
\item If it is white, repeat the algorithm.
\item If it is black, mark this vertex and execute the vertex-peeling process. Then, repeat the algorithm on the unique infinite connected component of the map deprived of the faces revealed by the vertex-peeling process. 
\end{itemize} 
The algorithm ends when the initial finite white segment (here a single vertex) has been swallowed. 

We can check that the pattern of the boundary is preserved and that the steps of this process are i.i.d. exactly as before. For every $n\geq 0$, let $W_n$ be the length of the finite white segment at step $n$ of the exploration. Since $1-p_{c,\rm{site}}^{\square}<p_{c,\rm{site}}^{\square}$, the percolation model is subcritical for closed vertices and we have that the stopping time $T':=\inf\{n\geq 0 : W_n=0\}$ is almost surely finite, using the arguments of Section \ref{SectionPCQ}. Then, two cases may occur at time $T'$:

\begin{itemize}
\item There is an explicit black crossing (which occurs with positive probability), see Figure \ref{blackCrossing}.

\begin{figure}[h]
\begin{center}
\includegraphics[scale=1.6]{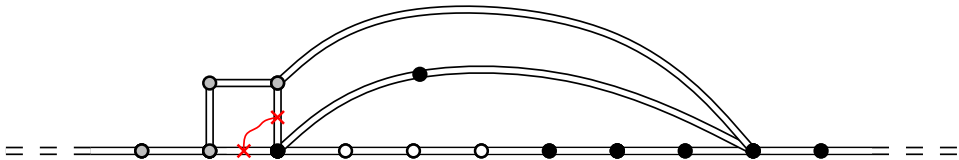}
\end{center}
\caption{A black crossing occurs.}
\label{blackCrossing}
\end{figure}

\item There is no black crossing, see Figure \ref{NoblackCrossing}.

\begin{figure}[h]
\begin{center}
\includegraphics[scale=1.6]{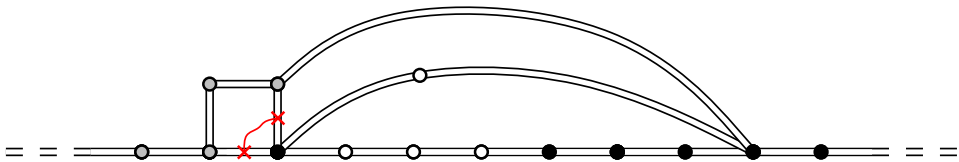}
\end{center}
\caption{No black crossing occurs.}
\label{NoblackCrossing}
\end{figure}

\end{itemize}
Let us focus on the case where there is no crossing. The total number $\mathsf{F}$ of (free) vertices discovered by the peeling process up to time $T'$ is  almost surely finite (because $T'$ also is), so that the probability of the crossing event we are interested in is now bounded from below by the probability of the same event in a planar map which has the law of the UIHPQ and the boundary condition of Figure \ref{situationNoCrossing}. Note that even though the $\mathsf{F}$ vertices discovered by the peeling process are free, it is helpful to suppose that they are closed vertices because they do not lie on the initial boundary of the map on which we want to exhibit a crossing event - in other words, the crossing now has to occur \textit{across} this finite closed segment.

\begin{figure}[h]
\begin{center}
\includegraphics[scale=1.6]{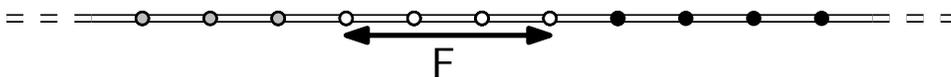}
\end{center}
\caption{The situation when no black crossing occurred.}
\label{situationNoCrossing}
\end{figure}

We now repeat the peeling process we previously introduced, with an initial white segment of size $\mathsf{F}$. Since $\mathsf{F}$ is almost surely finite, this process ends in finite time almost surely. The point is now to observe that the probability that a black crossing occurs at time $T'$ is bounded from below by $p^{\square}_{c,\text{site}}$, which is the probability that the free vertex between the two black vertices in the last revealed face (if any) is black. The successive executions of this process being independent, we end up with a black crossing in finite time almost surely, i.e. $\nu^*_{\infty,\infty}\left(\mathsf{V}\right)=1$, which ends the proof.\end{proof}

Using the same arguments as in Section \ref{SectionBondCase}, the statements we obtained yield that

$$\mathbb{P}\left(\mathsf{C}_{\rm{site}}^{\square}(\lambda a,\lambda b)\right) \underset{\lambda \rightarrow +\infty}{\longrightarrow} \frac{1}{\pi}\arccos\left(\frac{b-a}{a+b}\right),$$ which is exactly Theorem \ref{TheoremCP} in the case of site percolation on the UIHPQ, and thus ends the proof of the main result.

\bibliography{UniversalityPercolationRPM}
\bibliographystyle{abbrv}

\end{document}